\documentclass[a4paper,11pt]{amsart}
\usepackage{amsmath,amssymb,amsthm,amsfonts}
\usepackage{color}
\usepackage[all]{xy}
\usepackage{tikz-cd}
\usepackage{cancel}
\usepackage{hyperref}
\newcommand{\EE}{\mathcal{E}}
\newcommand{\LL}{\mathcal{L}}
\newcommand{\FF}{\mathcal{F}}
\newcommand{\GG}{\mathcal{G}}
\newcommand{\HH}{\mathcal{H}}
\newcommand{\kk}{\mathbb{K}}

\newcommand{\ZZ}{\mathbb{Z}}

\newcommand{\CC}{\mathcal{C}}
 
\newcommand{\OO}{\mathcal{O}}

\newcommand{\VV}{\mathcal{V}}

\DeclareMathOperator{\coker}{coker}
\DeclareMathOperator{\im}{Im}
\DeclareMathOperator{\Hom}{Hom}
\DeclareMathOperator{\Art}{\bf{Art}}
\DeclareMathOperator{\Grpds}{\bf{Grpds}}
\DeclareMathOperator{\Set}{\bf{Sets}}
\DeclareMathOperator{\Spec}{Spec}
\DeclareMathOperator{\Id}{Id}

\DeclareMathOperator{\MC}{MC}
\DeclareMathOperator{\Def}{Def}
\DeclareMathOperator{\Del}{Del}

\DeclareMathOperator{\End}{End}
\DeclareMathOperator{\Mor}{Mor}
\DeclareMathOperator{\Ext}{Ext}

\DeclareMathOperator{\Tot}{Tot}

\newcommand{\h}{\mathfrak{h}}
\newcommand{\g}{\mathfrak{g}}
\newcommand{\m}{\mathfrak{m}}

\newtheorem*{theorems}{Theorem}
\newtheorem{theorem}{Theorem}[section]
\newtheorem{corollary}[theorem]{Corollary}

\newtheorem{proposition}[theorem]{Proposition}
\newtheorem{lemma}[theorem]{Lemma}
\theoremstyle{definition}
\newtheorem{definition}[theorem]{Definition}
\newtheorem{remark}[theorem]{Remark}
\newtheorem{example}[theorem]{Example}

\definecolor{rosso}{RGB}{162,0,0}
\definecolor{verde}{RGB}{0,100,0}
\definecolor{blu}{RGB}{0,0,162}

\definecolor{ballblue}{rgb}{0.0, 0.5, 1.0}

\title{Deformations of morphisms of coherent sheaves}
\author{Donatella Iacono}
\address{\newline Dipartimento di Matematica,
	\newline  Universit\`a degli Studi di Bari Aldo Moro,
	\hfill\newline Via E. Orabona 4,
	70125 Bari, Italy.}
\email{donatella.iacono@uniba.it}

\author{Emma Lepri}
\address{
	\newline
	Dipartimento di Matematica ``Giuseppe Peano'',
	\hfill\newline
	Universit\`a degli Studi di Torino,
	\hfill\newline
	via Carlo Alberto 10,
	10123 Torino, Italy}
\email{emma.lepri@unito.it; elena.martinengo@unito.it}

\author{Elena Martinengo}

 \keywords{Deformations and infinitesimal methods, differential graded Lie algebras, Deligne functor, deformations of morphisms.}
 \subjclass[2020]{14B10, 14B12, 14D15, 13D10, 18F20.}

\begin{document}
	\maketitle 
	
	\begin{abstract}
We generalise Hinich's Theorem of descent of Deligne groupoids to the case where the dgLas involved have no negative cohomology. We apply this result to study the infinitesimal deformations of a morphism $\alpha: {\mathcal F} \to {\mathcal G}$ of coherent sheaves,  where both the sheaves $ {\mathcal F}$ and $ {\mathcal G}$ and the map $\alpha$ can be deformed, on  a smooth variety over a field of characteristic zero. In particular, we provide an explicit dgLa  that controls these deformations via the Deligne functor, applying  the Thom--Whitney  totalisation to a specific semicosimplicial dgLa, constructed from geometrical data.

	\end{abstract}

	\tableofcontents
	\addtocontents{toc}{\protect\setcounter{tocdepth}{1}}

In  \cite{hinich}, Hinich proved  a  theorem on descent of Deligne groupoids   that connects the classical Kodaira--Spencer approach to deformation theory, in which a global deformation is obtained by gluing the local ones, to the more recent approach via differential graded Lie algebras (dgLas), in which a deformation problem is controlled by a dgLa via the Deligne functor. 
The strength of Hinich's Theorem is that it allows in concrete examples to find a specific and quite explicit dgLa starting from a geometrical analysis.

In particular, the classical approach to a geometric deformation problem naturally leads to a sheaf of dgLas, which yields a semicosimplicial  dgLa $\g^{\Delta}$: its \v{C}ech complex. Then the deformations are described by a totalisation process of Deligne functors: $  \Tot(\Del_{\g^{\Delta }})$. Instead, the dgLa approach encodes deformations via the Deligne functor associated to the Thom--Whitney totalisation  $\Tot (\g^{\Delta })$. Then, Hinich's Theorem provides an equivalence between the two functors in groupoids  $ \Del_{\Tot (\g^{\Delta }) } $ and $ \Tot(\Del_{\g^{\Delta }})$, whenever all the involved dgLas vanish in negative degree (see Section~\ref{sezione Descent of Deligne groupoids} for more details).

In \cite{FMM, FIM}, the equivalence between the two approaches is considered in the setting of $\Set$-valued deformation functors. In particular, in \cite{FMM}, the authors investigate the infinitesimal deformations of a locally free sheaf over any field of characteristic zero and, as a tool, they explicitly show the equivalence under the hypothesis that the dgLas are zero in negative degrees, that is, in the same hypothesis of Hinich's Theorem. In \cite{ FIM}, the authors analyse the infinitesimal deformations of a coherent sheaf. Here, since the involved sheaves of dgLas are non-zero in negative degree, they need   to explicitly  exhibit   the above equivalence of functors in sets under the milder hypothesis that the dgLas have no negative cohomology.

One of the main goals of this paper is to generalise Hinich's Theorem on descent of Deligne groupoids to the case of dgLas with negative degree but no negative cohomology.
\begin{theorems}[Theorem~\ref{generalized hinich theorem}]
	Let $\g^\Delta$ be a semicosimplicial dgLa such that $H^j(\g_i)=0$ for all $i\geq 0$ and $j<0$.
	Then, there is an equivalence of $\Grpds$-valued functors
	\[ \Phi: \Del_{\Tot (\g^{\Delta }) }   \to   \Tot(\Del_{\g^{\Delta }}).\]
\end{theorems}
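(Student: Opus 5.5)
Our plan is to reduce the statement to Hinich's original theorem, which gives the equivalence $\Del_{\Tot(\h^{\Delta})}\simeq \Tot(\Del_{\h^{\Delta}})$ whenever every $\h_i$ vanishes in negative degrees, by truncating $\g^\Delta$ in negative degrees. For each $i$ we take the subcomplex
\[
\tau\g_i := \Big(0\to Z^0(\g_i)\to \g_i^1\to \g_i^2\to\cdots\Big),
\]
where $Z^0(\g_i)=\ker(d\colon \g_i^0\to\g_i^1)$. This is a dg Lie subalgebra: a bracket of two cocycles of degree $0$ is again a cocycle of degree $0$, and degree $\ge 1$ is closed under the bracket. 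The coface maps of $\g^\Delta$ are dgLa morphisms, so they send $Z^0$ to $Z^0$, and therefore $\tau\g^\Delta$ is a semicosimplicial dgLa that vanishes in negative degrees. The hypothesis $H^j(\g_i)=0$ for $j<0$ says exactly that each inclusion $\tau\g_i\hookrightarrow\g_i$ is a quasi-isomorphism. Since $\Tot$ is the Thom--Whitney totalisation, it preserves quasi-isomorphisms, being an exact functor of the underlying cochain complexes degreewise. Hence $\Tot(\tau\g^\Delta)\to\Tot(\g^\Delta)$ is also a quasi-isomorphism. Note that $\Tot(\g^\Delta)$ can itself have negative degree components even when $\Tot(\tau\g^\Delta)$ does not.

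We then consider the commutative square of groupoid-valued functors
\[
\begin{array}{ccc}
\Del_{\Tot(\tau\g^\Delta)} & \longrightarrow & \Tot(\Del_{\tau\g^\Delta})\\
\downarrow & & \downarrow\\
\Del_{\Tot(\g^\Delta)} & \longrightarrow & \Tot(\Del_{\g^\Delta})
\end{array}
\]
The top arrow is Hinich's equivalence. The bottom arrow $\Phi$ is defined by the same recipe as Hinich's map: an MC element of $\Tot(\g^\Delta)\otimes\m_A$ is restricted to the vertices, giving MC elements $x_i\in\g_0\otimes\m_A$, and its components along the edges of $\Omega_1$ are integrated, giving gauge elements in $\g_1^0\otimes\m_A$ that satisfy the cocycle condition up to $\g_2^{-1}$-homotopies. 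We expect that this recipe makes sense without any degree assumption, and that the square commutes by naturality.

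It then suffices to prove that the two vertical arrows are equivalences of groupoids for every $A\in\Art$. For the left arrow, the standard result that quasi-isomorphic dgLas with no negative cohomology have equivalent Deligne groupoids applies. For Deligne groupoids with irrelevant stabilizer, i.e.\ where gauge actions are taken modulo $d\g^{-1}$, this is a Goldman--Millson type statement proved by induction on small extensions through the obstruction-theoretic long exact sequences, using $H^{-1}=0$ to control automorphisms. For the right arrow, we apply this same result to each $\tau\g_i\to\g_i$. We then check that $\Tot$ of groupoid-valued semicosimplicial functors, which only involves levels $0,1,2$, takes levelwise equivalences to equivalences: an explicit descent-data argument transports objects and morphisms and uses fullness at level $2$ to match cocycle conditions.

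The main obstacle is making $\Del$ and $\Tot(\Del)$ correctly defined when negative degrees are present. The Deligne groupoid must be taken with morphisms given by the action of $\exp(\g^0\otimes\m_A)$ modulo the irrelevant stabilizer generated by $d(\g^{-1}\otimes\m_A)$ and its brackets, so that quasi-isomorphism invariance holds. We must then verify that $\Phi$ is well defined on these quotient morphisms and that the square commutes up to natural isomorphism rather than strictly. If commutativity causes trouble, we will instead prove directly that $\Phi$ is essentially surjective and fully faithful by induction on small extensions, comparing obstruction spaces on both sides via $H^1$ and $H^2$ of $\Tot$ and the \v{C}ech-type spectral sequence. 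The truncation argument is then used only to identify those obstruction spaces.
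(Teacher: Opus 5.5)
\textbf{There is a genuine gap: the proposal fails at its first step.}

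Your $\tau\g_i=(0\to Z^0(\g_i)\to\g_i^1\to\g_i^2\to\cdots)$ is indeed a sub-dgLa stable under the cofaces. But it is \emph{not} quasi-isomorphic to $\g_i$, and the hypothesis $H^{<0}(\g_i)=0$ does not make it so. Since $Z^0$ is the kernel of $d$, the differential $\tau\g_i^0\to\tau\g_i^1$ is zero. Hence $H^0(\tau\g_i)=Z^0(\g_i)$ instead of $Z^0(\g_i)/d\g_i^{-1}$, and $H^1(\tau\g_i)=Z^1(\g_i)$ instead of $Z^1(\g_i)/d\g_i^0$. The inclusion is a quasi-isomorphism only if, in addition, $d\g_i^{-1}=0=d\g_i^0$.

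For example, take $E=(\kk\xrightarrow{\mathrm{id}}\kk)$ in degrees $-1,0$. Then $\End^\cdot(E)$ is acyclic, while your truncation has $H^0=H^1=\kk$. In the geometric case, $d\g_i^{-1}$ (the null-homotopic chain maps) and $d\g_i^0$ are nonzero as soon as the resolutions have nonzero differential.

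You have reversed the good truncation. In cohomological grading, the sub-dgLa $\cdots\to\g^{-1}\to Z^0\to 0$ is the one that preserves cohomology, and it keeps degrees $\le 0$. The truncation retaining $H^{\ge 0}$ is the quotient $\g^0/d\g^{-1}\to\g^1\to\cdots$ by the subcomplex $\cdots\to\g^{-1}\to d\g^{-1}\to 0$. That subcomplex is not a Lie ideal in general: it has nothing in degree $1$, so one would need for instance $[\g^1,d\g^{-1}]=0$.

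So you have no semicosimplicial, non-negatively graded dgLa quasi-isomorphic to $\g^\Delta$. Consequently Hinich's theorem on the top row and both vertical comparisons are unavailable, and the square argument collapses. The paper notes that such a replacement is available after passing to $L_\infty$ algebras, given extra data such as a semicosimplicial contraction (Bandiera's route). It deliberately avoids this.

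Your fallback, induction on small extensions comparing obstruction spaces, is only a sketch. You would have to construct compatible obstruction theories on $\Del_{\Tot(\g^\Delta)}$ and on $\Tot(\Del_{\g^\Delta})$, including for morphisms and automorphisms, and the proposal does not do this.

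The paper instead truncates in the \emph{semicosimplicial} direction:
\begin{itemize}
\item The spectral sequence and $H^{<0}(\g_i)=0$ show that $\Tot(\g^\Delta)\to\Tot(\g^{\Delta_{[0,2]}})$ is surjective on $H^{-1}$, bijective on $H^0$ and $H^1$, and injective on $H^2$. Hence the Deligne groupoids are equivalent.
\item $\Phi_1$ and $\Phi_2$, given by $e^{p(1)}$ on objects and $e^{T(1)}$ on morphisms, are shown essentially surjective and full by explicit lifting homotopies.
\item They are faithful by the comparison lemma (surjectivity on $H^{-1}$, injectivity on $H^0$), applied against $\Del_{\g_0}$ and $\Del_{\Tot(\g^{\Delta_{[0,1]}})}$ respectively.
\end{itemize}
This is where the vanishing of negative cohomology is actually used, and it never requires replacing $\g_i$ by a non-negatively graded model.
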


In this context, 
many other authors contributed in recent years to the analysis of these functors and equivalences among them from different points of view. In \cite{getzler}, the Deligne groupoid was extended to $L_\infty$ algebras, and a theorem of descent for the Deligne groupoid of a semicosimplicial $L_\infty$ algebra concentrated in non-negative degrees is due to \cite{bandiera-tesi, bandiera}. This last result implies that descent of Deligne groupoids holds for any semicosimplicial dgLa cohomologically concentrated in non-negative degrees which is quasi-isomorphic to a semicosimplicial non-negatively graded $L_\infty$ algebra; for instance, for a semicosimplicial dgLa cohomologically concentrated in non-negative degrees equipped with a semicosimplicial contraction to its piecewise cohomology. 

The result can fail when the semicosimplicial dgLas involved have non-trivial negative cohomology (see Remark~\ref{rem.hyp-ott}); in the general case the right object to consider would be some higher Deligne groupoid \cite{bandiera-tesi, bandiera}.

Here, we would like to explicitly prove  the previous result without leaving the realm of dgLas, to keep the homotopical constructions as minimal as possible.

\medskip

Our interest in the generalisation of Hinich's Theorem actually arose  from the aim of analysing infinitesimal deformations of morphisms of coherent sheaves. 
Let $X$ be a smooth variety over a field of characteristic zero. Let $\FF$ and $\GG$ be coherent sheaves of $\OO_X$-modules on $X$ and $\alpha: \FF \to \GG$   a morphism of sheaves. 
We are interested in the infinitesimal deformations of $\alpha: \FF \to \GG$,  where both the sheaves $\FF$ and $\GG$ and the map $\alpha$ can be deformed.

We tackle this problem using similar techniques as in \cite{DE Brill-Noether, DE maps}, in which the sheaves $\FF$ and $\GG$ are locally free. Since now the sheaves are coherent, we are led to handle resolutions of locally free sheaves instead of single sheaves,  which makes the study much more involved. 

First we note that this deformation problem is equivalent to deforming the graph of $\alpha$ as a subsheaf of the direct sum $\FF \oplus \GG$, in such a way that the deformation of  $\FF \oplus \GG$ is given by a deformation of $\FF$ and a deformation of $\GG$. This approach is similar to the one developed in \cite{Dona.Tesi, Dona.def maps, DE maps}.
Since we work over a field of characteristic zero, all these deformation problems, deformations of a sheaf and of a subsheaf inside a sheaf, are studied via the  semicosimplicial dgLas techniques as developed in \cite{FMM, FIM}.
 In particular, we are able to associate to all of the deformation problems involved a \v{C}ech complex, which is  a semicosimplicial dgLa. 
Then, the geometric deformation data are naturally encoded in the totalisation of the semicosimplicial Deligne functor of the corresponding \v{C}ech complex. 

In this perspective, the set-valued deformations of a coherent sheaf have been thoroughly investigated in \cite{FIM}; the small generalisation to the  groupoid-valued setting necessary for this paper is carried out in Section~\ref{sec.defo-sheaf}.
On the other hand, to understand the deformations of the graph of the morphism inside the direct sum $\FF \oplus \GG$, we  need  to consider the deformation problem of a pair (sheaf, subsheaf), where both the subsheaf and the sheaf deform simultaneously,  such that   the deformation of the subsheaf remains a subsheaf of the deformation of the sheaf. To find a controlling dgLa we generalised the result of \cite{DE maps}, valid for locally free sheaves, to coherent sheaves.  This is the content of Section~\ref{sec.subsheaf}.

At the level of semicosimplicial dgLas, the key ingredient is the construction of simultaneous locally free resolutions of the sheaf and of the subsheaf.   Then, we can construct  the sheaf of dgLas given by the endomorphisms of the sheaf which restrict to  endomorphisms of  the subsheaf, and the Thom--Whitney totalisation of the associated \v{C}ech complex  controls this deformation problem (see Theorem~\ref{thm.dgLa def sottofasci}). 
 Then, we apply this explicit description of the deformations of a pair (sheaf, subsheaf)  to the case  (graph of $\alpha$, $\FF \oplus \GG$).

Finally, a deformation of the morphism $\alpha: \FF \to \GG$ is obtained from the data of the three involved deformations (of $\FF$, of $\GG$ and of the pair (graph of $\alpha$, $\FF \oplus \GG$)) via a totalisation process, which corresponds to taking the homotopy limit. 
The dgLas involved are constructed from endomorphisms of bounded locally free resolutions and hence they do not vanish in  negative degree but their negative cohomology is zero.  
Therefore, we cannot directly apply Hinich's Theorem of descent of Deligne groupoids  \cite{hinich}, instead we apply our generalisation of Hinich's Theorem (Theorem~\ref{generalized hinich theorem}) to   obtain a differential graded Lie algebra that controls the infinitesimal deformations of the morphism   $\alpha: \FF \to \GG$ via the Deligne functor in groupoids.

More precisely, we fix an open affine cover ${\mathcal V} $ of $X$,  locally free resolutions $\mathcal{E}_{\mathcal{F}}^{\cdot} \to \mathcal{F} $ and $  \mathcal{E}_{\mathcal{G}}^{\cdot} \to \mathcal{G}$ of $\mathcal{F}$ and of $\mathcal{G}$, respectively, such that there exists a lifting of $\alpha\colon \FF \to \GG$ to a morphism $\mathcal{E}_{\mathcal{F}}^{\cdot} \to  \mathcal{E}_{\mathcal{G}}^{\cdot}$.
 Let $\LL^\cdot$ be the sheaf of dgLas controlling the deformation of the pair (graph of $\alpha$, $\FF \oplus \GG$).  \begin{theorems}[Theorem~\ref{thm dgla of alpha}]
	The pseudo-functor $\Def_{(\FF, \alpha, \GG)}$ of infinitesimal deformations of the morphism $\alpha:\FF \to \GG$ of coherent sheaves is equivalent to the Deligne functor associated to the Thom--Whitney dgLa $\Tot(H(\VV))$ of the following semicosimplicial dgLa $H(\VV)$:
	\[  \xymatrix{   {\Tot(\EE nd^\cdot(\EE_\FF^\cdot)({\mathcal V} ))}  \oplus {\Tot(\EE nd^\cdot(\EE_\GG^\cdot)({\mathcal V} ))}   \oplus   {\Tot(\LL^\cdot({\mathcal V} ))}\ar@<2pt>[r]\ar@<-2pt>[r] &  {\Tot(\EE nd^\cdot(\EE_\FF^\cdot\oplus \EE_\GG^\cdot)({\mathcal V} ))} }.\]
	In particular, $H^1(\Tot(H(\VV)))$ is the tangent space of $\Def_{(\FF, \alpha, \GG)}$ and  $H^2(\Tot (H(\VV)))$ is an obstruction space.
\end{theorems}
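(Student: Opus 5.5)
We first reduce the deformation problem to a homotopy fibre product of groupoid-valued functors and then apply the generalised descent theorem (Theorem~\ref{generalized hinich theorem}) twice. First, identify $\Def_{(\FF,\alpha,\GG)}$ with the groupoid of triples consisting of a deformation $\FF_A$ of $\FF$, a deformation $\GG_A$ of $\GG$, and a deformation of the pair (graph of $\alpha$, $\FF\oplus\GG$). The pair must be compatible with the direct sum: the deformed ambient sheaf is identified, via a chosen isomorphism, with $\FF_A\oplus\GG_A$. Such an isomorphism is part of the data, and morphisms are compatible isomorphisms.

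Second, we show that this groupoid is the $2$-fibre product
\[\Def_\FF\times\Def_\GG\times_{\Def_{\FF\oplus\GG}}\Def_{(\Gamma_\alpha,\FF\oplus\GG)}.\]
The first map is the direct sum; the second forgets the subsheaf. A deformation of the graph inside $\FF_A\oplus\GG_A$ is flat and projects isomorphically onto $\FF_A$, since this can be checked modulo $\m_A$ by Nakayama and flatness. It is therefore the graph of a unique morphism $\alpha_A:\FF_A\to\GG_A$ lifting $\alpha$. This gives the equivalence with deformations of $\alpha$.

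Third, we express each factor through Deligne groupoids. By Section~\ref{sec.defo-sheaf}, $\Def_\FF\simeq\Del_{\Tot(\EE nd^\cdot(\EE^\cdot_\FF)(\VV))}$, and similarly for $\GG$ and for $\FF\oplus\GG$, using the resolution $\EE_\FF^\cdot\oplus\EE_\GG^\cdot$. By Theorem~\ref{thm.dgLa def sottofasci}, the pair is controlled by $\Tot(\LL^\cdot(\VV))$. Here we use that the lifting of $\alpha$ to $\EE_\FF^\cdot\to\EE_\GG^\cdot$ provides a simultaneous resolution of the graph inside $\EE_\FF^\cdot\oplus\EE_\GG^\cdot$.

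We must also check that these equivalences commute up to natural isomorphism with the maps of the fibre product. The relevant dgLa maps are the block-diagonal inclusion $\EE nd(\EE_\FF)\oplus\EE nd(\EE_\GG)\to\EE nd(\EE_\FF\oplus\EE_\GG)$ and the inclusion $\LL^\cdot\hookrightarrow\EE nd(\EE_\FF\oplus\EE_\GG)$. Together they give the two coface maps of the semicosimplicial dgLa $H(\VV)$. The homotopy fibre product of Deligne groupoids along these two maps is exactly $\Tot(\Del_{H(\VV)})$ for a two-term semicosimplicial object. Indeed, the totalisation unwinds to the following data: an object in degree $0$, a morphism in degree $1$ between its two images, and a trivial cocycle condition because there are no higher terms.

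Finally, all four dgLas have vanishing negative cohomology. They compute $\Ext^{<0}$ of coherent sheaves, or the analogous groups for the pair, via Theorem~\ref{thm.dgLa def sottofasci}, and these vanish. The same holds after applying $\Tot$. Hence Theorem~\ref{generalized hinich theorem} applied to $H(\VV)$ gives $\Del_{\Tot(H(\VV))}\simeq\Tot(\Del_{H(\VV)})$, which proves the main statement. The claims about tangent and obstruction spaces then follow from the standard theory of the Deligne functor of a dgLa.

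The main obstacle is the compatibility step: showing that the equivalences from Section~\ref{sec.defo-sheaf} and Theorem~\ref{thm.dgLa def sottofasci} are natural with respect to the direct sum and forgetful maps. This is needed so that the geometric $2$-fibre product matches $\Tot(\Del_{H(\VV)})$. I would handle it by constructing all equivalences at the level of the semicosimplicial (\v{C}ech) description, where the maps are literal inclusions of sheaves of dgLas, and only then totalising.
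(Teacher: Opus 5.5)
Your proposal is correct and follows the paper's route. Like the paper, you identify $\Def_{(\FF,\alpha,\GG)}$ with the total groupoid of the two-term semicosimplicial groupoid $\Def_\FF\times\Def_\GG\times\Def_{\gamma\subset\FF\oplus\GG}\rightrightarrows\Def_{\FF\oplus\GG}$ (Proposition~\ref{prop.equiv grpds}), replace each term by the Deligne groupoid of the corresponding Thom--Whitney dgLa via Theorems~\ref{thm.dgLa def fasci coerenti} and~\ref{thm.dgLa def sottofasci}, and conclude by applying Theorem~\ref{generalized hinich theorem} to $H(\VV)$, with your Nakayama/flatness identification of graph deformations with morphisms $\alpha_A$ and your care about compatibility with the coface maps filling in points the paper treats as evident.
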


 The Thom--Whitney dgLa constructed in this way, whose quasi-isomorphism class does not depend on the choice of the cover,  is unfortunately quite complicated
to handle, but its  cohomology groups fit into a long exact sequence:
\[
\cdots \to   \! \!   	{H^i(\Tot(H(\VV)))} \! \!  \to  \! \! {\Ext_X^i(\FF, \FF)\oplus\Ext_X^i(\GG, \GG)} \! \! \to  \! \! {\Ext_X^i(\FF, \GG)} \! \! \to \! \! 
{H^{i+1}(\Tot(H(\VV)))} \! \!  \to\cdots. \]

This allows us  to more explicitly describe the tangent space to deformations of $\alpha: \FF \to \GG$ and an
obstruction space (see Corollary~\ref{cor.succ-esatta-lunga-H}). 
In a work in progress, we are aiming to  apply the techniques developed in this paper to investigate 
smoothness conditions for  the  infinitesimal deformations of $\alpha: \FF \to \GG$ and related deformation problems.

\subsection{Notation} We denote by $\kk$ a field of characteristic zero,  by $ \Art_\kk$ the category of local Artin $\kk$-algebras with residue field $\kk$, by $\Set$ the category of sets. By variety, we mean an integral (and so irreducible and reduced), separated scheme of finite type over $\kk$. We denote the tensor product over $\kk$ just by $\otimes$ instead of $\otimes_\kk$.

\subsection{Acknowledgments}  

The authors wish to thank Ruggero Bandiera, Barbara Fantechi, Domenico Fiorenza, Marco Manetti and Michael Wemyss for useful mathematical discussions and comments. 
All authors are members of  the GNSAGA group of INdAM. E.L. was partially supported by ERC Grant 101001227 (MMiMMa).
E.M. was partially supported by the European Union - Next Generation EU, Mission 4, Component 1, CUP D53D23005860006, within the project PRIN 2022L34E7W “Moduli spaces and birational geometry”.

	\section{Equivalence between gauge action and homotopy} 

		 We begin this section by recalling some definitions and properties of set-valued deformation functors, and the notions of gauge and homotopy equivalence of Maurer--Cartan elements. Then, we pass to groupoid-valued functors. We consider the   reduced Deligne functor and the notion of $2$-homotopy equivalence, which allows to construct a Deligne homotopy functor, and we prove they are equivalent. For further definitions and properties of differential graded Lie algebras, we refer the reader to \cite{Manetti.LMDT, Man Pisa}.

First we introduce the following definition. 
\begin{definition}
A \emph{differential graded Lie algebra}, briefly a \emph{dgLa}, is the data $(L,d,[\ ,\ ])$, where $L=\bigoplus_{i\in \mathbb {Z}} L^i$ is a $\mathbb  Z$-graded vector space over $\kk$, $d:L^i \rightarrow L^{i+1}$ is a linear map such that $d \circ d=0$, and $[\ ,\ ]:L^i \times L^j \rightarrow L^{i+j}$ is a bilinear map, such that:
\begin{itemize}
\item $[\ ,\ ]$ is graded skewsymmetric:  $[a,b]=-(-1)^{\deg a\deg b}[b,a]$,  
\item $[\  \,,\ ]$ verifies the graded Jacobi identity: $[a,[b,c]]=[[a,b],c]+(-1)^{\deg a\deg b}[b,[a,c]]$,
\item $[\ ,\ ]$ and $d$ verify the graded Leibniz rule: $d[a,b]=[da,b]+(-1)^{\deg a}[a,db]$, 
\end{itemize}
for every $a, b$ and $c$ homogeneous.
\end{definition}

\begin{definition}
Let $(L,d_L, [\ ,\ ]_L)$ and $(M, d_M, [\ ,\ ]_M)$ be two dgLas, a \emph{morphism of dgLas}  $\varphi:L\to M$ is a degree zero linear morphism that commutes with the brackets and the differentials.  

A \emph{quasi-isomorphism} of dgLas is a morphism of dgLas that induces an isomorphism in cohomology. 
\end{definition}

\subsection{Functors in Sets}

	The deformation functor associated to a dgLa $L$ is the functor \[\Def_L: \Art_{\kk}\to \Set,\] 
	that associates to every $A \in \Art_\kk$ the set \[\Def_L(A) := \Def(L\otimes \m_A)=\frac{\MC_L(A)}{\sim_{gauge}},\]
	where
	$$\MC_L(A):=\MC(L\otimes \m_A)=\left\{x\ \in L^1 \otimes\m_A\mid dx+\frac{1}{2}[x,x]=0\right\}$$
	is the set of the Maurer--Cartan elements, 
	and two elements $x, y \in \MC_L(A)$ are gauge equivalent if and only if there exists $a \in L^0 \otimes \m_A$ such that 
	$e^a *x=y$.   More explicitly,
	the gauge action $*$ is the action of $\exp (L^0 \otimes \m_A)$ on $\MC(L\otimes \m_A)$, given by:
	$$e^a * x= x+\sum_{n=0}^{+\infty} \frac{([a,-])^n}{(n+1)!}([a,x]-da).$$

	\begin{remark}
		Note that
		\[
		e^a*x=x \qquad \mbox{ if and only if} \qquad da+[x,a]=0.
		\]
		In particular, for any $u \in  L^{-1}\otimes \m_A$, the element $du +[x,u]$ satisfies this condition, i.e.,  $e^{du +[x,u]}*x=x$ \cite[Lemma 6.3.4]{Manetti.LMDT}. 
	\end{remark}

 Consider now the polynomial algebra $\kk[t_1, \ldots, t_n, dt_1, \ldots, dt_n]$, where the indeterminates $t_i$ have degree $0$ and the indeterminates $dt_i$ have degree $1$. The product is the obvious one, while the differential is  given by $d(t_i)=dt_i$ and $d(dt_i)=0$, for all $i$. Let $L$ be a dgLa,  then  $L[t_1, \ldots, t_n, dt_1, \ldots, dt_n]=L \otimes \kk[t_1, \ldots, t_n, dt_1, \ldots, dt_n]$  inherits the obvious structure of dgLa.

	The following results explicitly describe  the Maurer--Cartan elements in the dgLas $L[t,dt]$ and $L[t,s,dt,ds]$. 
	
	 	\medskip

	\begin{proposition} \label{pro MC_L[t,dt]}
		 Let $L$ be a dgLa and $A \in \Art_\kk$.
		Then,  every element  of
		$\MC_{L[t,dt]}(A)$ is of the form  $e^{p(t)}  * x$, where $x \in \MC_L(A)$, $p(t)\in L^0[t] \otimes
		\m_A$ and $p(0)=0$.
		Moreover,  $p(t)$ and $x$ are uniquely determined.
	\end{proposition}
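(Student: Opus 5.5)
Every Maurer--Cartan element $z\in\MC_{L[t,dt]}(A)$ can be written uniquely as $z = y(t) + q(t)\,dt$ with $y(t)\in L^1[t]\otimes\m_A$ and $q(t)\in L^0[t]\otimes\m_A$. The plan is to turn the Maurer--Cartan equation into two equations in these components and then produce $x$ and $p(t)$ from $z$.

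\textbf{Step 1: split the Maurer--Cartan equation.} The $dt$-free part of $dz+\frac12[z,z]=0$ says that $y(t)$ is a Maurer--Cartan element of $L\otimes\m_A$ for each fixed $t$. The $dt$-part says
\[
\frac{dy}{dt}= d q + [q,y],
\]
up to signs, which must be fixed carefully. This is exactly the derivative condition satisfied by a path obtained through the gauge action with infinitesimal generator $q(t)$.

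\textbf{Step 2: compare with gauge orbits.} For $x\in\MC_L(A)$ and $p(t)\in L^0[t]\otimes\m_A$, the element $e^{p(t)}*x$ is Maurer--Cartan in $L[t,dt]\otimes\m_A$, because the gauge action is functorial for the dgLa $L[t,dt]$. Its $dt$-component is the Maurer--Cartan element $y(t)=e^{p(t)}*x$ of $L\otimes \m_A$. Its $dt$-coefficient is $-\,\frac{e^{\operatorname{ad}p}-1}{\operatorname{ad}p}\bigl(\frac{dp}{dt}\bigr)$, coming from the $-d_{t}p$ term in the gauge formula. So for existence, given $z$, I set $x=y(0)$ and solve
\[
\frac{e^{\operatorname{ad}p}-1}{\operatorname{ad}p}\Big(\frac{dp}{dt}\Big)=-q(t), \qquad p(0)=0 .
\]

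\textbf{Step 3: solve for $p(t)$.} The operator $\frac{e^{\operatorname{ad}p}-1}{\operatorname{ad}p}$ is invertible, with inverse $\frac{\operatorname{ad}p}{e^{\operatorname{ad}p}-1}$. The resulting ODE $p'=-\frac{\operatorname{ad}p}{e^{\operatorname{ad}p}-1}(q)$ is solved by induction on the nilpotency order of $\m_A$: modulo $\m_A^{n+1}$ the right-hand side depends on $p$ only modulo $\m_A^n$. At each stage we integrate a polynomial in $t$, which is possible in characteristic zero, so the solution stays polynomial. This gives $p(t)$ uniquely with $p(0)=0$, and hence $z$ and $w:=e^{p(t)}*x$ have the same $dt$-coefficient.

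\textbf{Step 4: show $z=w$.} Both $y(t)$ and $y_w(t)$ satisfy the same linear-in-$y$ ODE $y'=dq+[q,y]$ with the same value $x$ at $t=0$. The uniqueness of polynomial solutions of such ODEs, again by induction on the $\m_A$-adic filtration, then gives $y=y_w$, so $z=e^{p(t)}*x$.

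\textbf{Uniqueness of $(x,p)$.} Evaluating at $t=0$ forces $x=z(0)$. The $dt$-component $q(t)$ of $z$ determines $p$ through the ODE of Step 3 with $p(0)=0$, and that solution is unique.

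\textbf{Main obstacle.} The delicate point is the exact formula for the $dt$-component of $e^{p(t)}*x$, that is, the derivative of the exponential, together with consistent signs. I would check it first modulo $\m_A^2$ and then argue with the BCH/exponential-derivative formula. If the paper instead cites \cite{Manetti.LMDT} for the exponential derivative, I would invoke that.
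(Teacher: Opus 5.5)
Your proof is correct. The paper gives no argument of its own here; it only refers to \cite[Corollary~7.2]{FMcone} and \cite[Lemma 6.4.8]{Manetti.LMDT}. Your proposal is a complete, self-contained version of the standard argument:
\begin{itemize}
\item split the Maurer--Cartan equation in $L[t,dt]$ into a pointwise Maurer--Cartan condition and a $dt$-equation;
\item recover $p$ from the $dt$-coefficient by $\m_A$-adic Picard iteration, which uses characteristic zero to integrate polynomials;
\item conclude by uniqueness for the resulting flow.
\end{itemize}

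The ``main obstacle'' you flag is not really there. For $p_i$ of degree $0$ one has $d(p_i\otimes t^i)=dp_i\otimes t^i+p_i\otimes i\,t^{i-1}dt$. Bracketing with $p$ introduces no Koszul sign, and $[p,x]$ has no $dt$-term. So the only $dt$-terms of $e^{p}*x$ come from $-dp$, and the $dt$-coefficient is exactly $-\sum_{n\geq 0}\frac{(\operatorname{ad}p)^n}{(n+1)!}\bigl(\frac{dp}{dt}\bigr)$. This is read off directly from the gauge formula, with no derivative-of-the-exponential or BCH input.

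Moreover, Step 4 only uses that $z$ and $w=e^{p(t)}*x$ are both Maurer--Cartan, with the same $dt$-coefficient and the same value at $t=0$. The difference of their $dt$-free parts then satisfies a homogeneous linear equation with coefficient in $\m_A$ and vanishes at $t=0$, so it is zero. The precise sign in the $dt$-part of the Maurer--Cartan equation therefore plays no role. With the Koszul conventions that equation is $\frac{dy}{dt}=dq+[y,q]$.

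A small slip: in Step 2 you mean the $dt$-free component of $e^{p(t)}*x$, not its $dt$-component.
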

	\begin{proof}
		See \cite[Corollary~7.2]{FMcone} or \cite[Lemma 6.4.8]{Manetti.LMDT}. 
	\end{proof}
	
	\begin{remark}\label{se fissa elemento allora polinomio zero}
		Let $x \in  \MC_L(A)$ and  $p(t)\in L^0[t] \otimes
		\m_A$, with $p(0)=0$ and $e^{p(t)}  * x=x$, for any $t$. Then, applying the uniqueness of the previous corollary, the polynomial $p(t) $ is the zero polynomial.
		
	\end{remark}

	\begin{proposition} \label{pro MC_L[t,s,dt,ds]}
		Let $L$ be a dgLa and $A \in \Art_\kk$.
		Then,  every element  of 
		$\MC_{L[t,s,dt,ds]}(A)$ is of the form $e^{r(t,s,ds)}  * x$, where $x \in \MC_L(A)$, and 
		\[
		r(t,s,ds)\in (L^0[t,s]\cdot t+ L^0[t,s]\cdot s+  L^{-1}[t,s]\cdot  tds) \otimes \m_A.
		\]
		Moreover,  $r(t,s,ds)$ and $x$ are uniquely determined.
	\end{proposition}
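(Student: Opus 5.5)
The natural route is to apply Proposition~\ref{pro MC_L[t,dt]} twice, once in each variable. We regard $L[t,s,dt,ds]$ as $M[s,ds]$ with $M=L[t,dt]$, which is again a dgLa. Let $z\in\MC_{L[t,s,dt,ds]}(A)$. By Proposition~\ref{pro MC_L[t,dt]} applied to $M$, we can write $z=e^{q}*y$ with $y\in \MC_M(A)=\MC_{L[t,dt]}(A)$ and $q\in M^0[s]\otimes\m_A$, $q|_{s=0}=0$. Here
\[
M^0[s]=L^0[t,s]\oplus L^{-1}[t,s]\,dt,
\]
so $q\in (L^0[t,s]\cdot s+L^{-1}[t,s]\cdot s\,dt)\otimes\m_A$. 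Applying Proposition~\ref{pro MC_L[t,dt]} again to $y$ gives $y=e^{p(t)}*x$ with $x\in\MC_L(A)$ and $p(0)=0$. Thus $z=e^{q}*e^{p}*x=e^{q\bullet p}*x$, where $\bullet$ is the Baker--Campbell--Hausdorff product in $\exp(L[t,s,dt,ds]^0\otimes\m_A)$.

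The exponent $q\bullet p$ lies in $(L^0[t,s]\cdot t+L^0[t,s]\cdot s+L^{-1}[t,s]\cdot s\,dt)\otimes\m_A$, since every bracket term contains a factor of $q$ or $p$. This is the same as the statement but with $s\,dt$ in place of $t\,ds$. So either we should have split the variables the other way round (treat $L[t,s,dt,ds]$ as $L[s,ds][t,dt]$, producing the $t\,ds$ form directly), or we should verify that the two normalisations are interchangeable. I would perform the decomposition in the order $M=L[s,ds]$ and then the variable $t$. The outer step gives $q\in (L^0[t,s]\cdot t+L^{-1}[t,s]\cdot t\,ds)\otimes\m_A$ with $q|_{t=0}=0$. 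The inner step gives $p(s)\in L^0[s]\cdot s\otimes\m_A$. The BCH product of such elements then lies in the stated subspace, since it is closed under brackets with $p$ and $q$ up to the obvious terms. Existence follows.

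Uniqueness is the delicate step. Suppose $e^{r}*x=e^{r'}*x'$ with $r,r'$ in the given subspace $R$. We evaluate at $t=s=0$ (a dgLa morphism $L[t,s,dt,ds]\to L$ killing $R$) to get $x=x'$. Then $e^{-r'}\bullet e^{r}$ stabilises $x$. The difficulty is that $R$ is not obviously closed under BCH, so we cannot directly say $r'\bullet(-r)\in R$. Instead I would argue through the two-step decomposition. Restricting to $t=0$ via $L[t,s,dt,ds]\to L[s,ds]$ sends $e^{r}*x$ to $e^{r|_{t=0}}*x$, where $r|_{t=0}\in L^0[s]\cdot s$. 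The uniqueness in Proposition~\ref{pro MC_L[t,dt]} for $L[s,ds]$ then forces $r|_{t=0}=r'|_{t=0}$. Write $r=q\bullet p$ with $p=r|_{t=0}$, which is valid because $(-p)\bullet r$ vanishes at $t=0$ and lies in $L^0[t,s]t+L^{-1}[t,s]t\,ds$. The uniqueness part of Proposition~\ref{pro MC_L[t,dt]}, applied to $M=L[s,ds]$ in the variable $t$, then gives $q=q'$, hence $r=r'$. The main obstacle is exactly this bookkeeping: showing the map $(q,p)\mapsto q\bullet p$ is a bijection onto $R$. That amounts to checking that $t\cdot(\text{anything})$ and $L^0[s]s$ generate precisely $R$ under BCH, which I expect to be a filtration/induction argument on powers of $\m_A$.
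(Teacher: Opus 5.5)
Your final route is correct: write $L[t,s,dt,ds]=M[t,dt]$ with $M=L[s,ds]$, apply Proposition~\ref{pro MC_L[t,dt]} first in $t$ over $M$ and then in $s$ over $L$, and recombine via $e^{q}*(e^{p}*x)=e^{q\bullet p}*x$. The paper gives no argument of its own and simply cites \cite[Proposition 4.1]{FIM}; the shape of the subspace, namely $t\cdot (L[s,ds])^0[t]$ plus $L^0[s]\,s$, is exactly what this order of variables produces.

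There are two small points to fix. First, to get $r=q\bullet p$ you must take $q=r\bullet(-p)$, not $(-p)\bullet r$. Second, the ``bijection onto $R$'' needs no filtration or induction on $\m_A$:
\begin{itemize}
\item $T=(L^0[t,s]\oplus L^{-1}[t,s]\,ds)\otimes\m_A$ is a Lie subalgebra, because $ds\wedge ds=0$;
\item $tT$ is an ideal of $T$, and $R=tT\oplus (L^0[s]\,s\otimes\m_A)$;
\item evaluation at $t=0$ is a Lie morphism.
\end{itemize}
Hence $r\mapsto (r\bullet(-r|_{t=0}),\, r|_{t=0})$ is directly inverse to $(q,p)\mapsto q\bullet p$.
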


	\begin{proof}
		See \cite[Proposition 4.1]{FIM}.
	 	\end{proof}

	\smallskip
	
Next, we introduce the homotopy equivalence relation.
	
	\begin{definition}
		Let $L$ be a dgLa, $A \in \Art_\kk$ and $x,y \in \MC_L(A)$. The elements $x$ and $y$ are homotopy equivalent if and only if there exists 
		$P(t,dt) \in  \MC_{L[t,dt]}(A)$, such that $P(0)=x$ and $P(1)=y$.
	\end{definition}
	\begin{remark}\label{remark 1-omotopia esplicita}
		According to Proposition~\ref{pro MC_L[t,dt]}, $x$ and $y$ are homotopy equivalent if  and only if  there exists $p(t) \in L^0[t] \otimes
		\m_A$ with $p(0)=0$, so that $P(t,dt)= e^{p(t)} *x \in  \MC_{L[t,dt]}(A)$,  $P(0)= e^{p(0)} *x =e^0*x=x$ and  $P(1)= e^{p(1)} *x =y$.
	\end{remark}
	
	The homotopy equivalence is an equivalence relation $\sim _h$ on the set of Maurer--Cartan elements   (see  \cite[Lemma 13.1.3]{Manetti.LMDT}) and therefore we can define the following functor
	\[\Def'_L: \Art_{\kk}\to \Set,\] 
	that associates to every $A \in \Art_\kk$ the set \[\Def'_L(A) :=  \frac{\MC_L(A)}{\sim_h}.\]

	The following result establishes the well-known equivalence between  the gauge and  the homotopy relation.

\begin{proposition}\label{prop gauge=homotopy}
Let $L$ be a dgLa.
 Two Maurer--Cartan elements  are gauge equivalent if and only if they are homotopy equivalent. Then, there is an isomorphism of functors $\Def_L \cong \Def'_L $. 
	\end{proposition}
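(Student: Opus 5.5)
The proof rests on Proposition~\ref{pro MC_L[t,dt]} and Remark~\ref{remark 1-omotopia esplicita}, which describe Maurer--Cartan elements of $L[t,dt]$ as gauge orbits of constant Maurer--Cartan elements along polynomial paths. I will prove the two implications separately. The isomorphism $\Def_L\cong\Def'_L$ then follows at once: the identity of $\MC_L(A)$ descends to a bijection of quotients for every $A$, and this bijection is natural in $A$ because both relations are preserved by morphisms in $\Art_\kk$.

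\emph{Homotopy implies gauge.} Let $P\in\MC_{L[t,dt]}(A)$ with $P(0)=x$ and $P(1)=y$. By Proposition~\ref{pro MC_L[t,dt]} we can write $P=e^{p(t)}*z$ with $z\in\MC_L(A)$, $p(t)\in L^0[t]\otimes\m_A$ and $p(0)=0$. Evaluation at $t=t_0$ is a morphism of dgLas $L[t,dt]\to L$. It therefore commutes with the gauge action, since the gauge formula involves only brackets and the differential, and it sends $dt$ to $0$. Evaluating at $t=0$ gives $z=x$. Evaluating at $t=1$ gives $y=e^{p(1)}*x$ with $p(1)\in L^0\otimes\m_A$, so $x$ and $y$ are gauge equivalent.

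\emph{Gauge implies homotopy.} Suppose $y=e^a*x$ with $a\in L^0\otimes\m_A$. Set $p(t)=ta\in L^0[t]\otimes\m_A$, which satisfies $p(0)=0$, and put $P=e^{ta}*x\in L[t,dt]\otimes\m_A$. Here the gauge action is taken in the dgLa $L[t,dt]\otimes\m_A$, whose differential includes $dt\,\partial_t$. The gauge action of $\exp((L[t,dt])^0\otimes\m_A)$ preserves Maurer--Cartan elements, and $x$, viewed as a constant, is Maurer--Cartan in $L[t,dt]$; hence $P\in\MC_{L[t,dt]}(A)$. Alternatively, one can use Remark~\ref{remark 1-omotopia esplicita} directly. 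Evaluating as before gives $P(0)=x$ and $P(1)=e^a*x=y$, so $x$ and $y$ are homotopy equivalent.

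There is no serious obstacle. The one point needing care is that evaluation at $t_0$ is a dgLa morphism commuting with the gauge action. The Maurer--Cartan equation and the gauge formula are expressed purely through $d$ and the bracket, so any dgLa morphism tensored with $\m_A$ respects them. Since the gauge formula involves $da$, one must note explicitly that evaluation kills $dt$, so the term $d(ta)=t\,da+dt\,a$ specialises correctly to $t_0\,da$. Nilpotency of $\m_A$ ensures that all the exponential series are finite sums, so evaluation can be applied term by term.
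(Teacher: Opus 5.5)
Your proposal is correct and follows the paper's proof essentially verbatim. For gauge $\Rightarrow$ homotopy you use the path $e^{ta}*x$, and for homotopy $\Rightarrow$ gauge you write $P=e^{p(t)}*x$ via Proposition~\ref{pro MC_L[t,dt]} and evaluate at $t=1$, just as the paper does; your remark that evaluation at $t_0$ is a dgLa morphism commuting with the gauge action only makes explicit a step the paper leaves implicit.
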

 
	\begin{proof}
	See for example \cite{SchlSt}  or \cite[Lemma 10.5.5]{Manetti.LMDT}. Here, we recall the details for further use. If $x$ and $y$ are gauge equivalent via  $a\in L^0 \otimes \m_A$, i.e., $ e^a * x = y$, then they are homotopy equivalent via $e^{at}*x \in \MC_{L[t,dt]}(A)$; indeed $e^{a\cdot0}*x=x$ and $e^{a\cdot1}*x=y$. Conversely, let $x$ and $y$ be homotopy equivalent via $P(t,dt) \in  \MC_{L[t,dt]}(A)$. Then, by Proposition~\ref{pro MC_L[t,dt]}, we have that $P(t,dt)= e^{p(t)}  * x$,  for some $p(t)\in L^0[t]\cdot t \otimes
		\m_A$. In particular, $y = P(1)=e^{p(1)}*x$ and so $x$ and $y$ are gauge equivalent via $p(1)$.
 This isomorphism is easily proved to be functorial. 
	\end{proof}
	\bigskip
	
	The aim of the following section is to extend the previous proposition to the level of groupoids (see Theorem~\ref{teo gauge= homot in gruppoidi}).
	 
	\subsection{Functors in groupoids}

	A \emph{small category} is a category whose morphisms form a set. 
	A \emph{groupoid} is a small category such that every morphism is an isomorphism. We denote the category of groupoids  by $\Grpds$.

	For every groupoid $G$, 
	the set of isomorphism classes of objects is denoted by  $\pi_0(G)$.  
	Moreover, a functor of groupoids $f:G \to G'$ is an equivalence if and only if  $f: \pi_0(G) \to    \pi_0(G')$ is a bijection and  $f: \Mor_G(g,g)\to   \Mor_{G'}(f(g),f(g))$ is an isomorphism of groups, for any $g \in G$.  
	
The category $\Grpds$ is actually a $(2,1)$-category, whose objects are groupoids, morphisms are functors of groupoids and 2-morphisms are natural transformations, which are all invertible. Pseudo-functors from $\Art_{\kk}$ to $\Grpds$, with pseudo-natural transformations as morphisms and modifications as 2-morphisms likewise form a $(2,1)$-category. 
We are often going to think of the category of groupoids inside the $(2,1)$-category of groupoids, and the category of functors $\Art_\kk \to \Grpds$ inside the $(2,1)$-category of pseudo-functors. For example, by an \emph{equivalence} of (pseudo-)functors $F, G: \Art_{\kk} \to \Grpds$ we mean a pseudo-natural transformation such that $F(A)\to G(A)$ is an equivalence of groupoids for every $A \in \Art_{\kk}$.

\medskip

	Let $L$ be a   dgLa and $A \in \Art_\kk$. We define $C_L(A)$ as the groupoid whose set of objects is $\MC_L(A)$ and whose morphisms between two objects $x$ and $y$ are defined as the set
	\[ \Mor_{C_L(A)}(x,y) = \{ e^a \in \exp(L^0 \otimes \m_A) \mid e^a * x = y\}. \]
	Note that we have a multiplication map   given by
	\[
	\Mor_{C_L(A)}(y,z) \times  \Mor_{C_L(A)}(x,y) \to  \Mor_{C_L(A)}(x,z) \qquad (e^a,e^b) \mapsto e^ae^b=e^{a\bullet b},
	\]
where  	$\bullet$ denotes the Baker--Campbell--Hausdorff product in a Lie algebra.
	The \emph{irrelevant stabiliser} of a Maurer--Cartan element $x\in\MC_L(A)$ is the normal subgroup 
	\[ I_A(x) = \{ e^{du + [x,u]} \mid u \in L^{-1} \otimes \m_A\} \subseteq\Mor_{C_L(A)}(x,x). \]
	If $a \in \exp (L^0 \otimes \m_A)$ and $e^a *x=y$, then $e^a I_A(x) e^{-a}=I_A(y)$ \cite[Lemma 6.5.5]{Manetti.LMDT}.
	This allows us to define an equivalence relation on $\Mor_{C_L(A)}(x,y) $:  two morphisms $e^a, e^b \in \Mor_{C_L(A)}(x,y)$ are equivalent if there exists an element $u \in L^{-1} \otimes \m_A$ such that $e^{a \bullet (du + [x,u])} =e^b$, or equivalently if there exists an element $v \in L^{-1} \otimes \m_A$ such that $e^{ (dv + [y,v])\bullet a} =e^b$.
	Moreover, the above multiplication is well defined on the equivalence classes and so we can define the following functor in groupoids.

	\begin{definition}\label{def Deligne groupoid}
		If $L$ is any dgLa, we define the \emph{Deligne functor} 
		\[\Del_L:\Art_\kk \to \Grpds,\]
		as the functor that associates to every $A \in \Art_\kk$ the (reduced) Deligne groupoid 
		\[\Del_L(A):= \Del(L\otimes \m_A),\]
		having as objects the Maurer--Cartan elements of $L\otimes \m_A$ and as morphisms 
		\[ \Mor_{\Del_L(A)}(x,y) = \frac{\Mor_{C_L(A)}(x,y)}{I_A(x)} = \frac{\Mor_{C_L(A)}(x,y)}{I_A(y)}.   \] 
	\end{definition}

Any morphism of dgLas $L\to M$ induces a natural transformation of functors $\Del_L\to \Del_M$.

We say that a dgLa $L$ controls a deformation problem if there is an equivalence between the functor $\Del_L$ and the deformation pseudo-functor in question.
	
\begin{remark}
We remark that the Deligne groupoid considered in \cite{hinich, bandiera-tesi, getzler} is actually the action groupoid of the gauge action on the Maurer--Cartan set, which we denote here by $C_L(A)$. What we denote with $\Del_L(A)$ is sometimes called the reduced Deligne groupoid \cite{yek}. This distinction is not needed when the dgLas considered are concentrated in non-negative degrees, as in \cite{hinich}.  
\end{remark}

\bigskip
  Here, we recall two results that will be useful later.

\begin{lemma}\cite[Lemma 2.1]{joint-defos}\label{lemma equivalenza funtori con diagramma}
Let $f\colon L\to M$ be a morphism of dgLas, $A\in \Art_{\kk}$ and let 
\[\xymatrix{\Del_L(A)\ar[r]^-{g}\ar[d]_f&G\ar[d]^{k}\\
\Del_M(A)\ar[r]^-{h}&H}\]
be a diagram in $\Grpds$.
Assume that:
\begin{enumerate}
\item $h\circ f=k\circ g$;
\item $f\colon H^i(L)\to H^i(M)$ is surjective for $i=-1$ and injective for 
$i=0$;
\item $g$ is full and essentially surjective;
\item $h$ is faithful.
\end{enumerate}
Then $g$ is an equivalence of groupoids.
\end{lemma}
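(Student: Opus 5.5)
The plan is to reduce the statement to the faithfulness of the functor $f\colon \Del_L(A)\to\Del_M(A)$. Since $g$ is assumed full and essentially surjective, it only remains to show that $g$ is faithful. Let $\xi,\eta\in \Mor_{\Del_L(A)}(x,y)$ with $g(\xi)=g(\eta)$. By (1),
\[
h(f(\xi))=k(g(\xi))=k(g(\eta))=h(f(\eta)),
\]
and since $h$ is faithful by (4), we get $f(\xi)=f(\eta)$ in $\Del_M(A)$. Hence it suffices to prove that $f$ is faithful on Deligne groupoids. Composing with $\xi^{-1}$, this reduces to showing that $f\colon \Mor_{\Del_L(A)}(x,x)\to \Mor_{\Del_M(A)}(f(x),f(x))$ has trivial kernel for every $x\in\MC_L(A)$.

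For this I would use the standard description of the automorphism group in the reduced Deligne groupoid. Consider the twisted complex $(L\otimes\m_A, d_x)$ with $d_x=d+[x,-]$. By the Remark in this section, $e^a*x=x$ iff $d_xa=0$, and the irrelevant stabiliser is $\exp(d_x(L^{-1}\otimes\m_A))$. Therefore
\[
\Mor_{\Del_L(A)}(x,x)=\exp\bigl(Z^0(L\otimes\m_A,d_x)\bigr)/\exp\bigl(B^0(L\otimes\m_A,d_x)\bigr)\cong \exp H^0(L\otimes\m_A,d_x),
\]
where the BCH product is well defined on the quotient because $B^0$ is an ideal in the Lie algebra $Z^0$. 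This identification is compatible with $f$. So faithfulness follows once we show that $H^0(L\otimes\m_A,d_x)\to H^0(M\otimes\m_A,d_{f(x)})$ is injective.

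The main step, which I expect to be the real content, is to prove by induction on the length of $A$ that, for every $x$, the induced map on twisted cohomology is surjective in degree $-1$ and injective in degree $0$. Take a small extension $0\to J\to A\to B\to 0$ with $\m_AJ=0$. Since $x\in L^1\otimes\m_A$, the twisted differential on $L\otimes J$ is just $d\otimes \mathrm{id}$. We therefore get a short exact sequence of complexes
\[
0\to (L\otimes J,d)\to (L\otimes\m_A,d_x)\to (L\otimes\m_B,d_{\bar x})\to 0,
\]
compatible with $f$. Its long exact sequence contains
\[
H^{-1}_A(L)\to H^{-1}_B(L)\to H^0(L)\otimes J\to H^0_A(L)\to H^0_B(L),
\]
where $H^i_A(L)$ denotes $H^i(L\otimes\m_A,d_x)$, and similarly for $M$ with $H^{-2}(M)\otimes J$ and $H^{-1}(M)\otimes J$ on the left.

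The four lemma then gives both claims. For injectivity in degree $0$ we use that $f$ is surjective on $H^{-1}_B$ (induction), injective on $H^0\otimes J$ (hypothesis (2), since $J$ is a vector space) and injective on $H^0_B$ (induction). For surjectivity in degree $-1$ we use the segment $H^{-1}(L)\otimes J\to H^{-1}_A(L)\to H^{-1}_B(L)\to H^0(L)\otimes J$, with surjectivity on $H^{-1}\otimes J$ (hypothesis), surjectivity on $H^{-1}_B$ (induction) and injectivity on $H^0\otimes J$ (hypothesis). The base case $A=\kk[\epsilon]$ is the hypothesis itself, since then $x$ twists trivially. The main care needed is to check that the identification of automorphisms with $\exp H^0$ commutes with $f$, and that the connecting maps are compatible with $f$; both are formal.
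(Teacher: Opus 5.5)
Your argument is correct and is essentially the standard proof behind the cited \cite[Lemma 2.1]{joint-defos}; the paper itself gives no proof beyond that citation. Hypotheses (1) and (4) reduce faithfulness of $g$ to faithfulness of $f\colon \Del_L(A)\to\Del_M(A)$, and hypothesis (2) gives the latter via $\Mor_{\Del_L(A)}(x,x)\cong\exp H^0(L\otimes\m_A,d_x)$, induction on small extensions and the four lemma; the details you give all check out.
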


\begin{remark} 
The above lemma also holds in  the (2,1)-category of groupoids,   weakening the first item  (1) and requiring that  the diagram is only $2$-commutative \cite[Remark 2.2]{joint-defos}.
\end{remark}

\begin{lemma} \label{lemma LMDT H-1 H0 H1 equivalenza gpd} \cite[Theorem 6.6.9]{Manetti.LMDT}
	Let $f: L\to M $ be a morphism of dgLas
	such that the induced map $f: H^i (L)\to H^i (M)$ is:
	\begin{enumerate}
		\item surjective for $i=-1$,
		\item bijective for $i = 0, 1$,
		\item injective for $i = 2$.
	\end{enumerate}
	Then, for every $A\in \Art_\kk$ the morphism
	\[ f: \Del_L (A)\to \Del_M(A)\]
	is an equivalence of groupoids.
\end{lemma}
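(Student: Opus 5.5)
The plan is to argue by induction on the length of $A\in\Art_\kk$, using small extensions. I would treat the two halves of the statement separately: full faithfulness is a statement about automorphism groups, and the bijection on $\pi_0$ is a statement about $\Def_L\to\Def_M$. Since every morphism set $\Mor_{\Del_L(A)}(x,y)$ is either empty or a torsor under $\Mor_{\Del_L(A)}(x,x)$, full faithfulness reduces to showing that $f\colon \Mor_{\Del_L(A)}(x,x)\to \Mor_{\Del_M(A)}(f(x),f(x))$ is a group isomorphism for every $x\in\MC_L(A)$.

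\emph{Step 1 (automorphism groups).} By the Remark on the gauge action, $e^a*x=x$ if and only if $d_x a:=da+[x,a]=0$. Moreover, the irrelevant stabiliser consists of the elements $e^{d_xu}$ with $u\in L^{-1}\otimes\m_A$. Hence $\Mor_{\Del_L(A)}(x,x)$ is identified with $\exp\bigl(H^0(L\otimes\m_A,d_x)\bigr)$, with the group law induced by BCH. This is legitimate because $\ker d_x$ in degree $0$ is a nilpotent Lie subalgebra and $\im d_x$ is an ideal in it. So it suffices to prove that $f$ induces an isomorphism $H^0(L\otimes\m_A,d_x)\to H^0(M\otimes\m_A,d_{f(x)})$. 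To see this, filter $L\otimes\m_A$ by $L\otimes\m_A^k$. Since $x\in L^1\otimes\m_A$, the operator $[x,-]$ raises the filtration, so each graded piece is $L\otimes(\m_A^k/\m_A^{k+1})$ with the untwisted differential. On these pieces $f$ is surjective on $H^{-1}$, bijective on $H^0$ and injective on $H^1$, by hypotheses (1)--(3). Induction on $k$, applying the five lemma to the long exact sequences of $0\to \m_A^{k+1}\to\m_A^k\to \m_A^k/\m_A^{k+1}\to 0$ (tensored with $L$ and $M$), then gives bijectivity on $H^0$ of the twisted complexes. It also gives surjectivity on $H^{-1}$ and injectivity on $H^1$, which are needed to keep the induction going.

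\emph{Step 2 ($\pi_0$).} For a small extension $0\to I\to B\to A\to 0$, I would use the standard obstruction theory. A Maurer--Cartan element $x\in\MC_L(A)$ has an obstruction to lifting to $B$ lying in $H^2(L)\otimes I$. The set of lifts is a torsor under $Z^1(L)\otimes I$, and gauge classes of lifts over a fixed gauge class in $A$ are the orbits of $H^1(L)\otimes I$ under the action of the stabiliser of $x$ in $\Del_L(A)$. All of these structures are compatible with $f$. For essential surjectivity, take $y\in\MC_M(B)$. By induction its image in $A$ is gauge equivalent to $f(x)$ for some $x\in\MC_L(A)$. The obstruction of $x$ maps to zero, so it vanishes by injectivity on $H^2$. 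The difference between a chosen lift and $y$ then lies in $H^1(M)\otimes I=f(H^1(L))\otimes I$, so the lift can be corrected.

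\emph{Step 3 (injectivity on $\pi_0$).} This is where I expect the main obstacle, because it couples the two halves. Suppose $x_1,x_2\in\MC_L(B)$ with $f(x_1)\sim f(x_2)$. By induction their reductions are equivalent in $\Del_L(A)$, so after a gauge transformation they reduce to the same element $\bar x$. The remaining difference is a class in $H^1(L)\otimes I$, and it becomes zero in $H^1(M)\otimes I$ only modulo the action of the stabiliser of $f(\bar x)$ in $\Del_M(A)$. To conclude, I must lift that stabiliser element back to $\Del_L(A)$, which is possible by the surjectivity on stabilisers proved in Step 1, and then use injectivity on $H^1$. Carrying out this bookkeeping carefully, including checking that the action of stabilisers on the torsor of lifts is compatible with $f$, is the delicate point. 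Combining Steps 1--3 shows that $f$ is fully faithful and essentially surjective, hence an equivalence of groupoids.
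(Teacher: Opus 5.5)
The paper gives no proof of this lemma and simply imports it from \cite[Theorem 6.6.9]{Manetti.LMDT}, so your proposal supplies an argument the paper delegates. Your argument is correct, and it is the standard one in the spirit of Goldman--Millson, adapted to the reduced Deligne groupoid:
\begin{itemize}
\item identify $\Mor_{\Del_L(A)}(x,x)$ with $\exp H^0(L\otimes\m_A,d+[x,-])$ and compare these groups via the finite $\m_A$-adic filtration and the five lemma;
\item prove bijectivity on $\pi_0$ by induction on small extensions, using the obstruction class in $H^2(L)\otimes I$ and the $H^1(L)\otimes I$-torsor of lifts.
\end{itemize}

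A few points to tighten, none of them a gap.

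\begin{itemize}
\item Reducing full faithfulness to automorphism groups only works once injectivity on $\pi_0$ is known. It is cleaner to invoke directly the criterion recalled in the paper: bijection on $\pi_0$ plus isomorphisms of automorphism groups. Steps 1--3 give exactly that.
\item In Step 1 you do not need surjectivity on $H^{-1}$ of the twisted complexes. Besides the $H^0$ statements, the five lemma for $H^0(L\otimes\m_A^k,d_x)$ uses only two inputs: surjectivity on $H^{-1}$ of the graded pieces, which is hypothesis (1), and injectivity on $H^1(L\otimes\m_A^{k+1},d_x)$. The latter propagates by the four lemma, using $H^0$ surjective and $H^1$ injective on the graded pieces.
\item The compatibility you flag as delicate in Step 3 is immediate. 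For $c\in L^0\otimes\m_B$ and $\epsilon\in L^1\otimes I$ one has $e^c*(\tilde x+\epsilon)=e^c*\tilde x+\epsilon$, because $\m_B I=0$. Hence stabiliser elements act on the torsor of lifts by translations, independently of the base point. Elements of $L^0\otimes I$ act by coboundaries. An irrelevant stabiliser element $e^{d_{\bar x}u}$ lifts to $e^{d_{\tilde x}\tilde u}$, which fixes $\tilde x$, so it acts trivially.
\end{itemize}

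With these remarks, lifting a stabiliser element of $f(\bar x)$ back to $L$ via Step 1 and then using injectivity on $H^1$ closes the argument exactly as you describe.
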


\begin{remark}
Note that the equivalences of groupoids $f: \Del_L (A)\to \Del_M(A)$ are natural with respect to morphisms $A \to B$ in $\Art_\kk$ and then they induce an equivalence of functors
	\[ f: \Del_L \to \Del_M. \]

\end{remark}

	Next, we would like to define the extension of the functor $\Def'_L$ to groupoids.

	Let $L$ be a dgLa, $A \in \Art_\kk$ and $x,y \in \MC_L(A)$. Suppose that $P(t,dt)$ and $Q(t,dt)$ are two homotopies between $x$ and $y$.  According to Remark~\ref{remark 1-omotopia esplicita},   there exist unique polynomials $p(t), q(t) \in L^0[t] \cdot t\otimes
	\m_A$ such that 
	\begin{equation} 
		\begin{cases}
			P(t,dt)= e^{p(t)}  * x,   \     \ e^{p(1)}*x=y, \\
			Q(t,dt)= e^{q(t)}  * x,    \ \  e^{q(1)}*x=y. \\
		\end{cases}
	\end{equation}

	\begin{definition}\label{def.2-hom}
		In the above setup, the homotopies $P(t,dt)$ and $Q(t,dt)$ between $x$ and $y$ are $2$-homotopy equivalent if there exists  $R(t,s,dt,ds) \in \MC_{L[t,s,dt,ds]}(A) $ such that 
		\begin{equation} 
			\begin{cases}
				R(t,0,dt,0)= P(t,dt), \\
				R(t,1,dt,0)= Q(t,dt),  \\
				R(0,s,0,ds)= x, \\
				R(1,s,0,ds)= y.\\
			\end{cases}
		\end{equation}
In this case we write $P(t,dt) \sim_h Q(t,dt)$.
	\end{definition}
	\begin{remark}
		According to Propositions~\ref{pro MC_L[t,dt]} and \ref{pro MC_L[t,s,dt,ds]}, if $P(t,dt)$ and $Q(t,dt)$   are $2$-homotopy equivalent then there exists $r(t,s,ds)\in (L^0[t,s]\cdot t+ L^0[t,s]\cdot s+  L^{-1}[t,s]\cdot  tds) \otimes \m_A$, such that 
		\begin{equation} 
			\begin{cases}
				R(t,s,dt,ds)= e^{r(t,s,ds)} *x, \\
				r(t,0,0)=p(t),  \\
				r(t,1,0)= q(t), \\
				e^{r(0,s, ds) } *x  = x,\\
				e^{r(1,s, ds) } *x  = y.\\
			\end{cases}
		\end{equation}
	  Note that, since $e^{r(0,s,ds) } *x  = x$ and $r(0,0,0)=0$,  Remark~\ref{se fissa elemento allora polinomio zero} implies that $r(0,s,ds)$ is the zero polynomial. 
		
	\end{remark}
	
	\begin{example}\label{esempio omotoy p(t) con p(1) t}
		Let $x \in  \MC_L(A)$ and $p(t)  \in L^0[t]\cdot t \otimes
		\m_A$. Then, according to Proposition~\ref{pro MC_L[t,dt]},  $e^{p(t)}*x$ and $e^{p(1)t}*x$ are elements in $\MC_{L[t,dt]}(A)$. In particular, they are $2$-homotopy equivalent via 
		$e^{ p(1)ts+p(t)(1-s)}*x \in \MC_{L[t,s,dt,ds]}(A)$.
	\end{example}

	Finally, we recall this useful result.

	\begin{lemma}\label{lemma stab is irrelevant}
		Let $x(t,dt) \in \MC_{L[t,dt]}(A)$, $p(t,dt)  \in
		L[t,dt]^0\otimes \m_A$, such that $p(0)=0$, and
		$$
		e^{p(t,dt) } *x(t,dt)=x(t,dt).
		$$
		Then, $e^{p(1)} \in I_A(x(1))$, i.e., there  exists $u\in
		L^{-1}\otimes\m_A$ such that $p(1)=du+ [x(1),u]$. 
	\end{lemma}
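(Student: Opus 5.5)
We need to prove: if $x(t,dt)\in \MC_{L[t,dt]}(A)$ and $p(t,dt)\in L[t,dt]^0\otimes\m_A$ with $p(0)=0$ stabilises $x(t,dt)$, then $p(1)=du+[x(1),u]$ for some $u\in L^{-1}\otimes\m_A$.

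The plan is to turn the stabiliser condition into a differential equation in $t$ and integrate it. First we write $x(t,dt)=y(t)+z(t)dt$ with $y(t)\in L^1[t]\otimes\m_A$ and $z(t)\in L^0[t]\otimes\m_A$, and $p(t,dt)=a(t)+b(t)dt$ with $a(t)\in L^0[t]\otimes\m_A$, $b(t)\in L^{-1}[t]\otimes\m_A$, and $a(0)=0$. By the Remark following the definition of the gauge action, $e^{p}*x=x$ in $L[t,dt]$ holds if and only if $d_{L[t,dt]}p+[x,p]=0$. Splitting this into the $dt$-free part and the $dt$-part gives two equations:
\[
da(t)+[y(t),a(t)]=0,\qquad \dot a(t)-db(t)-[y(t),b(t)]+[z(t),a(t)]=0,
\]
up to signs coming from the Koszul rule when $dt$ is moved past elements of odd degree. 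These signs must be fixed carefully at the start. The first equation says that $a(t)$ is a $0$-cocycle for the twisted differential $d_{y(t)}=d+[y(t),-]$. The second is a first-order linear ODE for $a(t)$, with a $d_{y(t)}$-exact inhomogeneous term $d_{y(t)}b(t)$ and a linear term $[z(t),a(t)]$.

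Next we use the Maurer--Cartan equation for $x$. Its $dt$-component says $\dot y(t)=d z(t)+[y(t),z(t)]$ (again up to sign), so $y(t)$ moves along the gauge flow generated by $z(t)$. We solve the ODE $\dot g(t)=g(t)\,z(t)$, or the equivalent group-valued equation, to obtain a family $g(t)=e^{c(t)}\in\exp(L^0[t]\otimes\m_A)$ with $c(0)=0$ and $y(t)=e^{c(t)}*y(0)$. Because $\m_A$ is nilpotent, everything is a finite sum and the solution is polynomial in $t$. Proposition~\ref{pro MC_L[t,dt]} already gives $x=e^{c(t)}*y(0)$ in $L[t,dt]$, so we can take $c(t)$ directly from there. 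We then conjugate by $e^{-c(t)}$ and set $\tilde a(t)=e^{-\operatorname{ad}c(t)}a(t)$ and $\tilde b(t)=e^{-\operatorname{ad}c(t)}b(t)$. Since the gauge action of $e^{c(t)}$ in $L[t,dt]$ sends the constant element $y(0)$ to $x$ and intertwines $d+[x,-]$ with $d+[y(0),-]$, the stabiliser condition transforms into $d_{y(0)}\tilde p=0$ for $\tilde p=\tilde a+\tilde b\,dt$. Equivalently, $\dot{\tilde a}(t)=d_{y(0)}\tilde b(t)$ up to sign, with $\tilde a(0)=0$.

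Integrating from $0$ to $1$ gives $\tilde a(1)=d_{y(0)}\tilde u$ with $\tilde u=\pm\int_0^1\tilde b(t)\,dt\in L^{-1}\otimes\m_A$. Conjugating back by $e^{c(1)}$ gives $a(1)=d_{y(1)}u$ with $u=e^{\operatorname{ad}c(1)}\tilde u$, which is the required $p(1)=du+[x(1),u]$. The main obstacle is the conjugation step: we must check that the adjoint action of $e^{c(t)}$ in $L[t,dt]$ transports the twisted differential $d_x$ to $d_{y(0)}$, including the extra term produced by $d(e^{c(t)})$ involving $dt$. This is the standard identity $e^{\operatorname{ad}c}\circ d_{y}\circ e^{-\operatorname{ad}c}=d_{e^c*y}$, applied in the dgLa $L[t,dt]\otimes\m_A$, so it holds, but the signs need care. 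If that becomes unwieldy, a fallback is to proceed by induction on the nilpotency order of $\m_A$, solving the ODE order by order.
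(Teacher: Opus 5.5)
Your proof is correct and complete. Conjugating by the $e^{c(t)}$ supplied by Proposition~\ref{pro MC_L[t,dt]} turns the stabiliser condition into $d_{x(0)}\tilde p=0$ in $L[t,dt]\otimes\m_A$, where $d_{x(0)}=d+[x(0),-]$. The $dt$-component of this equation is $\dot{\tilde a}=-d_{x(0)}\tilde b$, so $\tilde u=-\int_0^1\tilde b(t)\,dt$ works, and conjugating back at $t=1$ gives $u=e^{\operatorname{ad}c(1)}\tilde u$.

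The paper gives no proof of its own here; it only cites \cite{Dona.def maps} and \cite{Manetti.LMDT}. As far as I recall, your argument is essentially the standard proof behind those citations: reduce to a constant Maurer--Cartan element using the description of $\MC_{L[t,dt]}(A)$ and the compatibility of irrelevant stabilisers with conjugation, then integrate in $t$.
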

	\begin{proof}
		See \cite[Lemma 6.15]{Dona.def maps}, \cite[Lemma 6.5.7]{Manetti.LMDT}.
	\end{proof}

	Let $L$ be a   dgLa and $A \in \Art_\kk$. We define $C^h_L(A)$ as the groupoid whose set of objects is $\MC_L(A)$ and whose morphisms between two objects $x$ and $y$ are defined as the set
	\[ \Mor_{C^h_L(A)}(x,y) = \{   P(t,dt)= e^{p(t)}  * x  \in  \MC_{L[t,dt]}(A) \mid p(t)\in L^0[t]\cdot t  \otimes \m_A \mbox{ and } e^{p(1)}*x=y\}. \]

Note that  the $2$-homotopy  defines an equivalence relation on $\Mor_{C^h_L(A)}(x,y) $  (see Lemma~\ref{lemma 2 homotopy equivalence relation}).   
Moreover, the equivalence relation is compatible with the multiplication map   given by	 
		\[
		\Mor_{C^h_L(A)}(y,z) \times  \Mor_{C^h_L(A)}(x,y) \to  \Mor_{C^h_L(A)}(x,z)
		\]
		\[(e^{q(t)}*y,e^{p(t)}*x) \mapsto  e^{q(t)\bullet p(t)}*x,
		\]
(see Lemma~\ref{lemma 2 homotopy compatibile prodotto}).  
Then, we can define the following functor in groupoids.

	\begin{definition}
		Let $L$ be any dgLa, we define the Deligne homotopy functor 
				\[\Del'_L:\Art_\kk \to \Grpds,\]
		as the functor that associates to every $A \in \Art_\kk$ the   groupoid  $\Del'_L(A)$
		having as objects the Maurer--Cartan elements of $L\otimes \m_A$ and as morphisms 
		\[ \Mor_{\Del'_L(A)}(x,y) = \frac{\Mor_{C^h_L(A)}(x,y)}{\sim_h}.   \] 
	\end{definition}
 
Any morphism of dgLas $L\to M$ induces a pseudo-natural transformation of functors $\Del'_L\to \Del'_M$.
	
Finally, we are ready to prove the extension to groupoids of Proposition~\ref{prop gauge=homotopy}.

\begin{theorem}\label{teo gauge= homot in gruppoidi}
	Let $L$ be a dgLa. Two homotopies between two Maurer--Cartan elements are $2$-homotopic if and only if the corresponding gauge elements are equivalent via the  irrelevant stabiliser. 
Then, there is an equivalence of functors 
\[\Del_L \cong \Del'_L .\]
\end{theorem}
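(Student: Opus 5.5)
The plan is to build, for each $A$, a functor $\Psi_A\colon \Del'_L(A)\to \Del_L(A)$ that is the identity on objects and sends the class of a homotopy $P(t,dt)=e^{p(t)}*x$ to the class of $e^{p(1)}$. We then show that $\Psi_A$ is well defined, bijective on morphism sets, compatible with composition, and natural in $A$. Since $\Psi_A$ is the identity on objects, being an isomorphism on morphism sets makes it an isomorphism of groupoids, hence an equivalence. The core of the argument is the first assertion of the statement: $P\sim_h Q$ if and only if $e^{q(1)}\in e^{p(1)}I_A(x)$.

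\textbf{From 2-homotopy to irrelevant stabiliser.} Let $R=e^{r(t,s,ds)}*x$ be a 2-homotopy from $P=e^{p(t)}*x$ to $Q=e^{q(t)}*x$, as in the remark following Definition~\ref{def.2-hom}. Restrict to $t=1$. This gives $e^{r(1,s,ds)}*x=y$ for all $s$, with $r(1,0,0)=p(1)$ and $r(1,1,0)=q(1)$.

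Set $z(s,ds):=e^{-p(1)}\bullet e^{r(1,s,ds)}$, viewed in $L[s,ds]^0\otimes\m_A$. Then $z(0)=0$ and $e^{z}*x=x$, with $x$ constant in $s$. Lemma~\ref{lemma stab is irrelevant} applies and gives $e^{z(1)}=e^{-p(1)\bullet q(1)}\in I_A(x)$, which is exactly the required equivalence.

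One technical point needs care. The element $r(1,s,ds)$ has a component in $L^{-1}\otimes ds$, so $z$ must be computed as a BCH product in $L[s,ds]$. We also use that $-p(1)$ is constant in $s$, so $e^{-p(1)}*y=x$ holds in $L[s,ds]$. Lemma~\ref{lemma stab is irrelevant} is stated for $p(t,dt)\in L[t,dt]^0$, so it applies as written.

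\textbf{From irrelevant stabiliser to 2-homotopy.} Suppose $e^{q(1)}=e^{p(1)\bullet(du+[x,u])}$ for some $u\in L^{-1}\otimes\m_A$. By Example~\ref{esempio omotoy p(t) con p(1) t} we may replace $P$ by $e^{p(1)t}*x$ and $Q$ by $e^{q(1)t}*x$, using transitivity of $\sim_h$ (Lemma~\ref{lemma 2 homotopy equivalence relation}). It then suffices to write down an explicit $R$.

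Put $w=du+[x,u]$. The first candidate is
\[
R=e^{p(1)t}*\bigl(e^{\,t s w + t\,u\,ds}*x\bigr),
\]
which we expect to be Maurer--Cartan in $L[t,s,dt,ds]$. To check this, we verify that $e^{\,t(sw+u\,ds)}$ acts trivially on $x$ at each point, since $w+u\,ds$ is a ``$d_{\mathrm{total}}$-exact'' stabiliser element. More precisely, we note that $sw+u\,ds$ has the form $d(su)+[x,su]$ computed in $L[s,ds]$. Therefore $e^{t\,d(su)+t[x,su]}$ lies in the irrelevant stabiliser of $x$ in $L[s,ds]$, and the gauge action of a $t$-multiple of it fixes $x$.

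If this formula fails to be MC because of the extra $dt$ terms, the fallback is different. We instead take the BCH-type homotopy $R=e^{(p(1)\bullet ts w)\,\cdots}$ and correct it using the uniqueness part of Proposition~\ref{pro MC_L[t,s,dt,ds]}. In that approach we construct the MC element first and read off $r$ afterwards. I expect this construction to be the main obstacle, in particular checking the boundary conditions $R(0,s)=x$ and $R(1,s)=y$, where $e^{tw}*x=x$ is needed at $t=1$.

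\textbf{Conclusion.} With the equivalence established, $\Psi_A$ is well defined and injective on morphism sets. It is surjective because any $e^a$ with $e^a*x=y$ is the image of the homotopy $e^{at}*x$. Functoriality follows because $(q\bullet p)(1)=q(1)\bullet p(1)$, which matches the composition law of Lemma~\ref{lemma 2 homotopy compatibile prodotto}. Naturality in $A$ is immediate, since both constructions commute with the maps $\m_A\to\m_B$.
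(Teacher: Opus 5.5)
Your architecture is sound, and it is the paper's proof run in the opposite direction. Your $\Psi_A$ goes $\Del'_L(A)\to\Del_L(A)$, so its well-definedness needs ``$2$-homotopic $\Rightarrow$ equal modulo $I_A(x)$''. Your argument for this, applying Lemma~\ref{lemma stab is irrelevant} to $-p(1)\bullet r(1,s,ds)$, is correct and is exactly the paper's injectivity step. Functoriality and surjectivity are even simpler in your direction, because evaluation at $t=1$ is multiplicative. The gap is the converse implication. You need it for injectivity of $\Psi_A$, and you leave it unresolved (``we expect'', followed by an unspecified fallback).

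Concretely, two things go wrong there.

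First, your justification of the Maurer--Cartan property is false, and it is also unnecessary. Take $c=d(su)+[x,su]\in L[s,ds]^0\otimes\m_A$. Then $dc+[x,c]=0$ in $L[s,ds]$. In $L[t,s,dt,ds]$, however, the gauge formula gives $e^{tc}*x=x-dt\cdot c$, where the $dt$-term comes from $d(tc)=dt\cdot c+t\,dc$. This is not $x$ unless $c=0$. What you should use instead is that any $e^{r}*x$ with $r$ of degree $0$ in $L[t,s,dt,ds]\otimes\m_A$ is automatically Maurer--Cartan, because the gauge action preserves Maurer--Cartan elements. So only the four faces need checking, and the face $t=1$ only requires $e^{c}*x=x$ in $L[s,ds]$.

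Second, your candidate has the wrong face at $s=1$. There $R(t,1,dt,0)=e^{(p(1)t)\bullet(wt)}*x$, whose exponent has $t$-coefficient $p(1)+w$. The exponent of $e^{q(1)t}*x$ has $t$-coefficient $q(1)=p(1)\bullet w$. By the uniqueness in Proposition~\ref{pro MC_L[t,dt]}, these are different homotopies in general. You can repair this in either of two ways:
\begin{itemize}
\item apply Example~\ref{esempio omotoy p(t) con p(1) t} once more (both exponents take the value $q(1)$ at $t=1$) and conclude with Lemma~\ref{lemma 2 homotopy equivalence relation};
\item or, as the paper does, take $R=e^{(p(1)\bullet c)\,t}*x$, whose faces are exactly $e^{p(1)t}*x$, $e^{q(1)t}*x$, $x$ and $e^{p(1)}*(e^{c}*x)=y$.
\end{itemize}
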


	\begin{proof}
		
		For any $A \in \Art_\kk$, let us define  $f: \Del_L(A)\to \Del'_L(A) $ as the identity on the set of objects and as the map $e^a\mapsto e^{at}$ on morphisms. Then,  Proposition~\ref{prop gauge=homotopy} implies that  $f: \pi_0(\Del_L(A)) \to    \pi_0(\Del'_L(A))$ is a bijection.
		
		Regarding morphisms, first of all we have to verify that $f$ is well defined on the set of morphisms. Let $e^a   =  e^{a \bullet (du + [x,u])}  \in  \Mor_{\Del_L(A)}(x,y) $. Then, the equality $e^{at} * x =  e^{(a \bullet (du + [x,u]))t} *x \in 
		\Mor_{\Del'_L(A)}(x,y) $ between the homotopies holds via the $2$-homotopy equivalence  $R(t,s,dt,ds)=e^{(a \bullet (d(su) + [x,su]))t}* x \in \MC_{L[t,s,dt,ds]}(A)$.   Indeed, we have $(a \bullet (d(su) + [x,su]))t\in (L^0[t,s]\cdot t+ L^0[t,s]\cdot s+  L^{-1}[t,s]\cdot  tds) \otimes \m_A$ and 
		\begin{equation} 
			\begin{cases}
				R(t,0,dt,0)= e^{(a \bullet 0) t } *x= e^{at } *x \\
				R(t,1,dt,1)=  e^{(a \bullet (du + [x,u]))t} *x,  \\
				R(0,s,0,ds)=   e^0*x=x\\
				R(1,s,0,ds)=  e^{a \bullet (d(su) + [x,su])} *x=y.\\
			\end{cases}
		\end{equation}

		Moreover, if $e^a \in  \Mor_{\Del_L(A)}(y,z) $ and  $e^b \in  \Mor_{\Del_L(A)}(x,y) $, then $e^{(a\bullet b)t} =e^{a t\bullet bt}   \in  \Mor_{\Del'_L(A)}(x,z) $, where the equality is via a  $2$-homotopy as in Example~\ref{esempio omotoy p(t) con p(1) t}. 
 
		 Following the same  Example~\ref{esempio omotoy p(t) con p(1) t}, $f: \Mor_{\Del_L(A)}(x,x)   \to   \Mor_{\Del'_L(A)}(x,x) $ is surjective, indeed any homotopy $e^{p(t)}*x \in \MC_{L[t,dt]}(A)$ is $2$-homotopy equivalent to $e^{p(1)t}*x$, which is in the image of $f$.

		Finally, the injectivity. Suppose that  $e^{at}*x=e^{ bt} *x  \in  \Mor_{\Del'_L(A)}(x,x) $  via  $R(t,s,dt,ds) \newline =e^{r(t,s,ds)} *x \in \MC_{L[t,s,dt,ds]}(A)$. In particular, $r(1,0,0)=a$ and $r(1,1,0)=b$.
		Consider the polynomial $\mu(s,ds) =r(1,s,ds)\bullet -a \in L[s,ds]^0 \otimes\m_A$. 
		Then,
		$\mu(0,0)=r(1,0,0)\bullet -a=a\bullet-a=0$ and 
		$  e^{\mu(s,ds)} *x = e^{r(1,s,ds)\bullet -a}*x=e^{r(1,s,ds)}*x =x$, for any $s$. Therefore, applying Lemma~\ref{lemma stab is irrelevant}, $e^{\mu(1,0)} \in I_A(x)$, i.e., there  exists $u\in
		L^{-1}\otimes\m_A$ such that $\mu(1,0)=du+ [x,u]$, which is equivalent to $ b \bullet -a = du+ [x,u]$.
		
 Finally, it is obvious that $f$ is natural with respect to morphisms in $\Art_\kk$ and it defines an equivalence of functors. 
	\end{proof}

	\section{Descent of Deligne groupoids}\label{sezione Descent of Deligne groupoids}

	 In the first part of this section, we recall the definitions of a semicosimplicial object, the total groupoid of a semicosimplicial groupoid and the Thom--Whitney totalisation of a semicosimplicial dgLa. In the second part, we recall some useful results about Deligne groupoids and truncations of semicosimplicial dgLas. Finally, we prove in Theorem~\ref{generalized hinich theorem} a generalisation of  Hinich's Theorem on descent of Deligne groupoids \cite{hinich} (see Theorem~\ref{teorema di hinich}) and of \cite[Theorem 4.10]{FIM} (see Theorem~\ref{teorema FIM}), valid for any semicosimplicial dgLa $\g^{\Delta}$ with non-negative cohomology.

	\subsection{Semicosimplicial objects and total constructions}
	
	A semicosimplicial object  $A^\Delta$ in a category $C$ is a diagram
	
	\[
	\xymatrix{ A_0
		\ar@<2pt>[r]\ar@<-2pt>[r] & A_1
		\ar@<4pt>[r] \ar[r] \ar@<-4pt>[r] &  A_2
		\ar@<6pt>[r] \ar@<2pt>[r] \ar@<-2pt>[r] \ar@<-6pt>[r]&
		\cdots},
	\]
	where each $A_i$ is an object in $C$, and for each
	$i>0$, there are $i+1$ morphisms  in $C$
	\[
	\partial_{k,i}\colon A_{i-1}\to A_{i},
	\qquad k=0,\dots,i,
	\]
	such that
	$\partial_{k+1,i+1}\partial_{j,i}=\partial_{j,i+1}\partial_{k,i}$,
	for any
	$k\geq j$.

	\begin{definition} \label{def. total grpds}
		Let $G^\Delta$ be a semicosimplicial groupoid. The \emph{total groupoid} $\Tot(G^\Delta)$ of  $G^\Delta$ is the groupoid defined as follows:
		\begin{itemize}
			\item the objects of $\Tot(G^\Delta)$ are the elements of the form $(l,m)$, where $l$ is an object in $G_0$ and $m: \partial_{0,1} l \to \partial_{1,1} l$ is a morphism in $G_1$ such that the following diagram commutes  in $G_2$

			\[ \xymatrix{ &     \partial_{0,2}\partial_{0,1} l  \ar@{=}[dl]   \ar[dr]^{\partial_{0,2} m}     &  \\
				\partial_{1,2}\partial_{0,1} l  \ar[d]^{\partial_{1,2} m}  &         &  \partial_{0,2}\partial_{1,1} l \ar@{=}[d]    \\
				\partial_{1,2}\partial_{1,1} l\ar@{=}[dr]  &  &\partial_{2,2}\partial_{0,1} l \ar[dl]^{\partial_{2,2} m}  \\
				&  \partial_{2,2}\partial_{1,1} l,  &   }
			\]

			\item the morphisms between two objects $(l_0,m_0)$ and $(l_1,m_1)$ are the morphisms $a$ in $G_0$ such that the following diagram commutes
			\[ \xymatrix{ \partial_{0,1} l_0    \ar[r]^{m_0}\ar[d]_{\partial_{0,1}a} & \partial_{1,1}  l_0 \ar[d]^{\partial_{1,1}a} \\  
				\partial_{0,1} l_1   \ar[r]_{m_1} &\partial_{1,1}  l_1.  }   \] 
		\end{itemize}
	\end{definition}

	\begin{example}\label{esempio Deligne simpliciale} {\bf{Totalisation of a semicosimplicial Deligne groupoid}. } 
		Let  ${\mathfrak g}^\Delta$ be a semicosimplicial dgLa. Then, for any $A \in \Art_\kk$, we can consider  
		the semicosimplicial groupoid
		\[
		\Del_{\g^{\Delta}}(A): \xymatrix{  \Del_{\g_0}(A)
			\ar@<2pt>[r]\ar@<-2pt>[r] &  \Del_{\g_1}(A)
			\ar@<4pt>[r] \ar[r] \ar@<-4pt>[r] &  \Del_{\g_2}(A)
			\ar@<6pt>[r] \ar@<2pt>[r] \ar@<-2pt>[r] \ar@<-6pt>[r]&
			\cdots}.
		\]
		In this case, an object of  $\Tot(\Del_{\g^{\Delta}}(A))$ is given by a pair $(l,e^m)$, where $l\in \MC_{\g_0}(A)$ and $m \in {\g_1}^0\otimes \m_A$, such that 
		$e^m* \partial_{0,1} l = \partial_{1,1} l$ (note that a morphism $e^m$ is actually a class up to irrelevant stabilisers). The commutativity in  $\Del_{\g_2}(A)$ is equivalent to the existence  
		of an element $u \in  {\g_2}^{-1}\otimes \m_A$ such that  ${\partial_{0,2}m} \bullet {-\partial_{1,2}m} \bullet
		{\partial_{2,2}m} =du+[\partial_{2,2}\partial_{0,1}l,u]$.
		
		A morphism  between $(l_0,e^{m_0})$ and $(l_1,e^{m_1})$ is given by an element $a \in  {\g_0^0}\otimes \m_A$ such that $e^a*l_0=l_1$ (note that also here $e^a$ is a class up to irrelevant stabiliser). The commutativity   
		is now equivalent to the existence of an element $b \in  {\g_1}^{-1}\otimes \m_A$ such that 
		$- m_0\bullet -\partial_{1,1}a \bullet m_1
		\bullet \partial_{0,1}a=db+[\partial_{0,1}l_0,b].$  	

This defines the following functor in groupoids: 
\[  \Tot(\Del_{\g^{\Delta}}): \Art_\kk \to \Grpds \]
that associates, to every $A \in \Art_\kk$, the groupoid $\Tot(\Del_{\g^{\Delta}}(A))$ defined as above. 
\end{example}

	\bigskip
	
	For every $n\ge 0$, we denote by $\Omega_n$ the differential graded
	commutative algebra of polynomial differential forms on the
	standard $n$-simplex $\Delta^n$:
	\[ \Omega_n=\frac{\kk[t_0,\ldots,t_n,dt_0,\ldots,dt_n]}{(\displaystyle \sum_{i=0}^n t_i-1,\displaystyle \sum_{i=0}^n dt_i)}.\]

	\begin{definition} \label{def.TW dgLA}
		Let $\g^{\Delta}$ be a semicosimplicial dgLa.
		The \emph{Thom--Whitney dgLa} associated to $\g^{\Delta}$ is the dgLa 
		\[\operatorname{Tot}(\mathfrak{g}^\Delta) =\{ (x_n)_{n}\in \prod_n \Omega_n\otimes {\mathfrak g}_n
		\mid \delta^{k,n}x_n= \partial_{k,n}x_{n-1}\quad \forall\; 0\le k\le n\},\]
		where, for $k=0,\ldots,n$, $\delta^{k,n}\colon \Omega_n\to \Omega_{n-1}$ are the face maps and $\partial_{k,n}\colon \g_{n-1}\to \g_n$ are the maps of the semicosimplicial dgLa $\g^{\Delta}$. 
	\end{definition}
 
\begin{remark} \label{remark quasi iso semicosimpliciale} We recall that there exists a quasi-isomorphism of dg vector spaces 
	between  $\operatorname{Tot}(\mathfrak{g}^\Delta)$ and the total complex associated to $\g^{\Delta}$. Moreover, let $f:  \g^{\Delta} \to \h^{\Delta}$ be a morphism of  semicosimplicial dgLas, such that $f_i:  \g_i \to \h_i $ is a quasi-isomorphism of dgLas for every $i$, then it induces a quasi-isomorphism $\operatorname{Tot}(\mathfrak{g}^\Delta)\to \operatorname{Tot}(\mathfrak{h}^\Delta)$  of dgLas. We refer to \cite[Chapter 7]{Manetti.LMDT}  for more details and properties.  \end{remark}

\begin{example}
Let ${\mathcal L}^\cdot$ be a sheaf of dgLas over a  variety $X$ and $\mathcal{V}=\{V_i\}_i$ an open affine cover of $X$. The \v{C}ech semicosimplicial  dgLa  is defined as:
\[
\xymatrix{ {\mathcal L^\cdot}({\mathcal V}) \ : \ \  {\prod_i\mathcal{L}^\cdot(V_i)}
\ar@<2pt>[r]\ar@<-2pt>[r] & {
\prod_{i,j}\mathcal{L}^\cdot(V_{ij})}
      \ar@<4pt>[r] \ar[r] \ar@<-4pt>[r] &
      {\prod_{i,j,k}\mathcal{L}^\cdot(V_{ijk})}
\ar@<6pt>[r] \ar@<2pt>[r] \ar@<-2pt>[r] \ar@<-6pt>[r]& \cdots},
\]
where, as usual, $V_{ij}=V_i \cap V_j$, $V_{ijk}=V_i \cap V_j \cap V_k$, $\mathcal{L}^\cdot(V_i)=\Gamma(V_i, \mathcal{L}^\cdot)$  and  the morphisms are the restriction maps. 
Here, the  total complex associated to
 $\mathcal{L}^\cdot(\mathcal{V})$ is  the \v{C}ech
complex $\check{C}(\mathcal{V},\mathcal{L}^\cdot)$ of the sheaf $\mathcal{L}^\cdot$. In particular, by the quasi-isomorphism between the total complex and the Thom--Whitney dgLa, for all $k \in \ZZ$, we have  the following isomorphisms
\[H^k(\Tot(\mathcal{L}^\cdot(\mathcal{V}))) \cong  \check{H}^k(\check{C}(\mathcal{V},\mathcal{L}^\cdot))\cong     \mathbb{H}^k(X, \mathcal{L}^\cdot).
\]
Note that the  quasi-isomorphism class of the   Thom--Whitney dgLa  does not depend on the choice of the affine cover.
\end{example}

\subsection{Truncations of semicosimplicial dgLas and useful results}
In this part, we recall some useful results which we will need in the proof of descent of Deligne groupoids.

According to \cite[Section 4]{FIM}, for any semicosimplicial dgLa $ \mathfrak{g}^{\Delta}$
\[ \mathfrak{g}^{\Delta}: \ \  \xymatrix{ \mathfrak{g}_0
		\ar@<2pt>[r]\ar@<-2pt>[r] & \mathfrak{g}_1
		\ar@<4pt>[r] \ar[r] \ar@<-4pt>[r] & \mathfrak{g}_2
		\ar@<6pt>[r] \ar@<2pt>[r] \ar@<-2pt>[r] \ar@<-6pt>[r] &
		\ldots }
	\]
	and $i \in\mathbb{N} \cup
	\{\infty\}$, we denote  by
	$\g^{\Delta_{[0,i]}}$,   the   semicosimplicial dgLa defined by
	\[
	(\g^{\Delta_{[0,i]}})_n=
	\begin{cases}
		\g_n &\text{for }  0\leq n\leq i\\
		0&\text{otherwise},
	\end{cases}
	\]
	with the obvious maps.

	For any    $i_1\leq i_2 \in\mathbb{N} \cup
	\{\infty\}$, the map
	$\Id_{[0,i_1]}\colon \g^{\Delta_{[0,i_2]}}\to
	\g^{\Delta_{[0,i_1]}}$ given by
	\[
	\Id_{[0,i_1]}\biggr\vert_{(\g^{\Delta_{[0,i_2]}})_n}=
	\begin{cases}
		\Id_{\g_n}&\text{if } 0 \leq n \leq i_1\\
		0&\text{otherwise}.
	\end{cases}
	\]
	is a morphism of semicosimplicial dgLas that  induces  the natural morphism of dgLas
	$\Id_{[0,i_1]}:\Tot (\g^{\Delta_{[0,i_2]}})\to
	\Tot (\g^{\Delta_{[0,i_1]}})$.

		\begin{proposition} \label{prop.tre suff}
		Let $\g^{\Delta}$ be a semicosimplicial dgLa, such that
		$H^j(\g_i)=0$ for all $i\geq 0$ and $j<0$. Then, the morphism
		$\Id_{[0,2]}: \Tot (\g^\Delta) \to
		\Tot (\g^{\Delta_{[0,2]}})$ induces an equivalence of functors:
		\[
		\Del_{\Tot (\g^\Delta) } \xrightarrow{\sim}
		\Del_{\Tot (\g^{\Delta_{[0,2]}})}.
		\]
 	
	\end{proposition}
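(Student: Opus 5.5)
The plan is to reduce the statement to the cohomological criterion of Lemma~\ref{lemma LMDT H-1 H0 H1 equivalenza gpd}. Set $f=\Id_{[0,2]}\colon \Tot(\g^\Delta)\to\Tot(\g^{\Delta_{[0,2]}})$. It suffices to show that $H^i(f)$ is surjective for $i=-1$, bijective for $i=0,1$ and injective for $i=2$. The resulting equivalences $\Del_{\Tot(\g^\Delta)}(A)\to\Del_{\Tot(\g^{\Delta_{[0,2]}})}(A)$ are natural in $A$, so they assemble into the required equivalence of functors.

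To compute the cohomology I will use Remark~\ref{remark quasi iso semicosimpliciale}. It gives a quasi-isomorphism between the Thom--Whitney totalisation and the total complex of the associated double complex, namely $\bigoplus_n \g_n[-n]$ with differential $d_{\g}\pm\sum_k(-1)^k\partial_{k,n}$. This quasi-isomorphism is natural with respect to morphisms of semicosimplicial dgLas, so $f$ corresponds to the projection of total complexes that kills the columns $\g_n$ with $n\geq 3$. This projection is surjective, and its kernel is the total complex $K$ of the truncated double complex made of the columns $n\ge 3$. By the long exact sequence in cohomology, the required injectivity and surjectivity statements follow once $H^j(K)=0$ for $j\le 2$. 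Indeed, $H^{-1}$ surjectivity needs $H^0(K)=0$ only in the sense that $H^{-1}\to H^{-1}\to H^0(K)$ is exact; similarly, bijectivity in degrees $0$ and $1$ and injectivity in degree $2$ follow from $H^j(K)=0$ for $j\leq 2$. In fact we get injectivity in degree $2$ from $H^2(K)=0$.

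To prove the vanishing of $H^j(K)$ for $j\le2$, I filter $K$ by the column index $n\geq 3$; this is the main point. The $E_1$ page of the spectral sequence is $E_1^{n,q}=H^q(\g_n)$, contributing to total degree $n+q$. By hypothesis $H^q(\g_n)=0$ for $q<0$, so total degree $n+q\le 2$ with $n\ge3$ forces $q<0$, and every relevant term vanishes.

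The convergence of this spectral sequence needs a little care, since the double complex is unbounded in both directions. To avoid spectral sequences, I will argue directly by an explicit staircase argument. Take a cocycle of total degree $j\le 2$ in $K$, with components $c_n\in\g_n^{j-n}$ for $n\ge 3$. The lowest component $c_3$ is $d_{\g}$-closed of negative degree, so it is exact; subtracting a coboundary kills it, and I iterate column by column. Each step changes only the higher columns. Because the product over $n$ is allowed, the infinite sequence of corrections converges componentwise, so the cocycle is a coboundary.

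The main obstacle I expect is matching the total complex convention with the truncation correctly, i.e. checking that it is a product and not a sum, so that this infinite staircase is legitimate. The other delicate point is the naturality of the quasi-isomorphism $\Tot\simeq$ total complex with respect to $\Id_{[0,2]}$.
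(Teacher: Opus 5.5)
Your proposal is correct and follows the paper's proof. Both reduce the statement to Lemma~\ref{lemma LMDT H-1 H0 H1 equivalenza gpd} by comparing the cohomology of the (product) total complexes of $\g^\Delta$ and $\g^{\Delta_{[0,2]}}$. Your staircase argument showing $H^j(K)=0$ for $j\le 2$ is an explicit version of the spectral-sequence check that the paper calls ``easy''; it is legitimate because the complex naturally quasi-isomorphic to the Thom--Whitney totalisation is the product one.
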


	\begin{proof}
	This is a generalisation of \cite[Proposition 3.8]{FIM} for functors in groupoids. 
		By looking at the spectral sequences associated to the double complexes of $\g^\Delta$ and $\g^{\Delta[0,2]}$ and taking into account the hypothesis that $H^j(\g_i)=0$ for all $i\geq 0$ and $j<0$, it is easy to check that the morphism of dgLas
		\[\Id_{[0,2]}: \Tot (\g^\Delta) \to
		\Tot (\g^{\Delta_{[0,2]}})\]
		satisfies the hypotheses of Lemma~\ref{lemma LMDT H-1 H0 H1 equivalenza gpd} and then it induces the required equivalence of functors.
	\end{proof}
 
Next, let us describe the Maurer--Cartan elements of $\Tot (\g^{\Delta_{[0,1]}})$ and $\Tot (\g^{\Delta_{[0,2]}})$.
 
	 	\begin{remark}\label{rem.elemMC-tronc}
		According to   Propositions~\ref{pro MC_L[t,dt]} and \ref{pro MC_L[t,s,dt,ds]} and \cite[Proposition 4.1]{FIM},  for any  $A \in \Art_\kk$,
		the  elements in $\MC_{\Tot (\g^{\Delta_{[0,1]}}) }(A)$
		are of the form
		\[
		(x, e^{p(t)}* \partial_{0,1}x),
		\]
		where $x \in \MC_{\g_0}(A)$,  $p(t) \in \g_1^0[t]\cdot t\otimes \m_A$, are uniquely determined and satisfy $\partial_{1,1}x=e^{p(1)}*\partial_{0,1}x$.
		The   elements in $\MC_{\Tot (\g^{\Delta_{[0,2]}}) }(A)$
		are of the form
		\[
		(x, e^{p(t)}* \partial_{0,1}x, e^{ r(t,s,ds)}*\partial_{0,2}\partial_{0,1}x),
		\]
		  where $x \in \MC_{\g_0}(A)$,  $p(t) \in \g_1^0[t]\cdot t\otimes \m_A$,   $r(t,s,ds) \in (\g_2^0[t,s]\cdot t+ \g_2^0[t,s]\cdot s +\g_2^{-1}[t,s]\cdot tds) \otimes \m_A$  are uniquely determined and they satisfy		
		
		\begin{equation}  \label{eq.face conditions}
			\begin{cases}
				\partial_{1,1}x=e^{p(1)}*\partial_{0,1}x, \\
				\partial_{0,2}p(t)= r(0,t,dt),\\
				\partial_{1,2}p(t)= r(t,0,0),   \\
				e^{(- \partial_{2,2}p(t))\bullet(r(t,1-t,dt))\bullet(-r(0,1))}*\partial_{2,2}\partial_{0,1}x=\partial_{2,2}\partial_{0,1}x.
			\end{cases}
		\end{equation}

	\end{remark}

\subsection{Proof of descent of Deligne groupoids}

As preliminary steps, we investigate the behaviour of  the Deligne functors of the truncations $\g^{\Delta_{[0,1]}}$ and $\g^{\Delta_{[0,2]}}$.

\bigskip

Generalising \cite[Proposition 4.2]{FIM}, we introduce the pseudo-natural transformation:
\[ \Phi_1: \Del_{\Tot (\g^{\Delta_{[0,1]}}) }   \to   \Tot( \Del_{\g^{\Delta_{[0,1]}}}),\]
defining, for every $A \in \Art_\kk$, the functor of groupoids 
\[ \Phi_1: \Del_{\Tot (\g^{\Delta_{[0,1]}}) } (A)  \to   \Tot( \Del_{\g^{\Delta_{[0,1]}}}(A)) ,\]
as follows.

An object in $ \Del_{\Tot(\g^{\Delta[0,1]})}(A) $ is a Maurer--Cartan element in   $\Tot(\g^{\Delta[0,1]})(A)$ and so it is of the form $(x, e^{p(t)} * \partial_{0,1}x)$, where $x \in \MC_{\g_0}(A)$, as in Remark~\ref{rem.elemMC-tronc}. We define 
\[\Phi_1 (x, e^{p(t)} * \partial_{0,1}x) = (x,e^{p(1)}).\] 

For a morphism  $z(\xi, d\xi)= \left( e^{T(\xi)}* x_0, e^{U(t,dt; \xi)}* e^{p(t)}*\partial_{0,1} x_0   \right)$  between $\eta_0=(x_0, e^{p_0(t)} * \partial_{0,1}x_0)$ and  $\eta_1=(x_1, e^{p_1(t)} *\partial_{0,1}x_1)$, we define 
\[\Phi_1( z(\xi, d\xi)) = e^{T(1)}\] 
that is a morphism in  $\Tot( \Del_{\g^{\Delta_{[0,1]}}}(A))$ between $(x_0,e^{p_0(1)})$ and $(x_1,e^{p_1(1)})$  (see \cite[Proposition 4.2]{FIM}). 
It is easy to check that it is a pseudo-natural transformation of functors.

\begin{remark} Note that $\Phi_1$ is actually well-defined. According to Theorem~\ref{teo gauge= homot in gruppoidi}, a morphism in $ \Del_{\Tot(\g^{\Delta[0,1]})}(A) $  is a class of homotopies up to $2$-homotopies. Consider two morphisms  between $\eta_0$ and $\eta_1$: $z_0(\xi, d\xi)= \left( e^{T_0(\xi)}* x_0, e^{U_0(t,dt; \xi)}* e^{p_0(t)}*\partial_{0,1} x_0   \right)$  and $z_1(\xi, d\xi)= \left( e^{T_1(\xi)}* x_0, e^{U_1(t,dt; \xi)}* e^{p_0(t)}*\partial_{0,1} x_0   \right)$, such that there exists a $2$-homotopy
		$R(\xi, \lambda, d\xi,  d\lambda)$ between them. Applying Proposition~\ref{pro MC_L[t,s,dt,ds]} with $L=\Tot(\g^{\Delta_{[0,1]}})$,  the first component  is of the form  $ e^{r_0(\xi, \lambda,  d\lambda)}* x_0$.
		In particular, this implies that $e^{r_0(1,\lambda, d\lambda)}* x_0 =x_1 $ for all $\lambda$, so that $e^{-T_0(1)\bullet r_0(1,\lambda, d\lambda)}* x_0 = e^{-T_0(1)}* x_1 =x_0$.  Since $-T_0(1)\bullet r_0(1,0, 0) =0$,    Lemma~\ref{lemma stab is irrelevant} implies that
		$e^{-T_0(1)\bullet r_0(1,1, 0)}= e^{-T_0(1)\bullet T_1(1)}$ belongs to the irrelevant stabiliser of $x_0$, and thus
		$\Phi_1( z_0(\xi, d\xi)) = e^{T_0(1)}$ and $\Phi_1( z_1(\xi, d\xi)) = e^{T_1(1)}$ are equivalent morphisms in  $\Tot( \Del_{\g^{\Delta_{[0,1]}}}(A))$.
		
\end{remark}

	\begin{proposition} \label{propo equivalenza in[0,1]}
Let $\g^\Delta$ be a semicosimplicial dgLa such that $H^j(\g_i)=0$ for all $i\geq 0$ and $j<0$. Then, 
		\[ \Phi_1: \Del_{\Tot (\g^{\Delta_{[0,1]}}) }   \to   \Tot( \Del_{\g^{\Delta_{[0,1]}}})\]
is an equivalence of functors. 

	\end{proposition}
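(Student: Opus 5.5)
To show that $\Phi_1$ is an equivalence, I will check, for each fixed $A\in\Art_\kk$, the criterion recalled before Definition~\ref{def Deligne groupoid}: $\Phi_1$ must be a bijection on $\pi_0$, and it must induce an isomorphism $\Mor(\eta,\eta)\to\Mor(\Phi_1\eta,\Phi_1\eta)$ on automorphism groups. Naturality in $A$ is clear from the formulas. Throughout, Theorem~\ref{teo gauge= homot in gruppoidi} lets me describe morphisms in $\Del_{\Tot(\g^{\Delta_{[0,1]}})}(A)$ as homotopies $e^{T(\xi)}*x_0$ in the first component, together with a compatible homotopy in the second component, taken up to $2$-homotopy.

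\textbf{Surjectivity on objects.} Given $(x,e^m)$ with $e^m*\partial_{0,1}x=\partial_{1,1}x$, I take $p(t)=mt$. By Remark~\ref{rem.elemMC-tronc}, $(x,e^{mt}*\partial_{0,1}x)$ is then a Maurer--Cartan element, and it maps to $(x,e^m)$. So $\Phi_1$ is essentially surjective, in fact surjective on objects.

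\textbf{Injectivity on $\pi_0$ and fullness.} Take objects $\eta_0,\eta_1$ and a morphism $e^a$ between their images in $\Tot(\Del_{\g^{\Delta_{[0,1]}}}(A))$. So $e^a*x_0=x_1$, and there is $b\in\g_1^{-1}\otimes\m_A$ with $-p_0(1)\bullet-\partial_{1,1}a\bullet p_1(1)\bullet\partial_{0,1}a=db+[\partial_{0,1}x_0,b]$.
\begin{itemize}
\item In the first component I set $T(\xi)=a\xi$.
\item In the second component I need $U(t,dt;\xi)$ such that the pair is Maurer--Cartan in $\Tot(\g^{\Delta_{[0,1]}})[\xi,d\xi]$. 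The face conditions are: at $\xi=0$ it gives $\eta_0$, at $\xi=1$ it gives $\eta_1$, and at $t=0,1$ it matches $\partial_{0,1}$ and $\partial_{1,1}$ of $e^{a\xi}*x_0$.
\item This amounts to filling a square in $\MC_{\g_1[t,\xi,dt,d\xi]}$ whose four boundary edges are prescribed. Going around the boundary gives the loop $-p_0(1)\bullet-\partial_{1,1}a\bullet p_1(1)\bullet\partial_{0,1}a$, which lies in the irrelevant stabiliser by the condition on $b$.
\item I will build the filler explicitly using Proposition~\ref{pro MC_L[t,s,dt,ds]}. Concretely, I interpolate the gauge elements along the boundary and add a correction term of the form $d(\phi b)+[\,\cdot\,,\phi b]$, with $\phi$ a polynomial in $t,\xi$ involving $dt$ or $d\xi$; this term absorbs the irrelevant stabiliser. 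Example~\ref{esempio omotoy p(t) con p(1) t} lets me reparametrise each edge to linear form.
\end{itemize}
This filler gives a morphism $\eta_0\to\eta_1$ mapping to $e^a$, which proves both injectivity on $\pi_0$ and fullness. I expect constructing this filler to be the main obstacle. Should a direct formula be too messy, the fallback is to lift through the surjection $\MC(L[t,\xi,dt,d\xi])\to\MC$ of the boundary; this lifting property is the standard Kan-type property of polynomial forms.

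\textbf{Faithfulness.} This step uses the hypothesis on $H^{-1}$. Suppose a loop $z$ at $\eta$ maps to $e^{T(1)}\in I_A(x)$, say $T(1)=du+[x,u]$. Using the $2$-homotopy $e^{(T(1)\bullet(d(su)+[x,su]))\xi}$ from the proof of Theorem~\ref{teo gauge= homot in gruppoidi}, I may assume $T(1)=0$, i.e.\ $T\equiv 0$ up to $2$-homotopy. The loop then lives in the second component: it is a loop in $\MC_{\g_1[t,\xi,dt,d\xi]}$ with fixed boundary. To trivialise it, I need the corresponding obstruction class to be zero, and this class lives in $H^{-1}$ of the kernel $\ker(\Tot(\g^{\Delta_{[0,1]}})\to\g_0)\simeq\g_1[-1]$, i.e.\ in $H^{-2}(\g_1)$. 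That group vanishes by hypothesis, which gives the required $2$-homotopy.

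A cleaner alternative for this last step is Lemma~\ref{lemma equivalenza funtori con diagramma}. One compares with the square formed by the projections $\Tot(\g^{\Delta_{[0,1]}})\to\g_0$ and $\Tot(\Del_{\g^{\Delta_{[0,1]}}}(A))\to\Del_{\g_0}(A)$, which reduces everything to checking that the latter projection is faithful up to the data in $\g_1$. I would try this alternative first if the explicit computation becomes unwieldy.
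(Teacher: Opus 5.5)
Your essential surjectivity and fullness steps are the paper's. It lifts $(l,e^m)$ to $(l,e^{mt}*\partial_{0,1}l)$, and it lifts $e^a$ by the explicit filler \eqref{equazione esplicita omotopia che solleva gauge}, whose correction term $d(\xi b)+[\partial_{0,1}l_0,\xi b]$ is your $d(\phi b)+[\,\cdot\,,\phi b]$. Your fallback is misphrased, though. Restriction to the full boundary of the square is not surjective on Maurer--Cartan elements, because the Kan property only fills horns. The boundary fills precisely because its holonomy is irrelevant.

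\textbf{The faithfulness step has a genuine error: the obstruction group is off by one.} Let $K=\ker(\Tot(\g^{\Delta_{[0,1]}})\to\g_0)\simeq\g_1[-1]$. A loop at $\eta$ over the constant loop at $x$ is an element of $\pi_1$ of the fibre, and this is governed by $H^0(K)\cong H^{-1}(\g_1)$, not by $H^{-1}(K)\cong H^{-2}(\g_1)$. Equivalently, trivialising your square rel boundary means filling a $2$-sphere in the $\g_1$-component, and $\pi_2$ corresponds to $H^{-1}$.

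Vanishing of $H^{-2}(\g_1)$ genuinely does not suffice. Take $\g_0=0$ and $\g_1=\kk$ in degree $-1$. Then $\Tot(\g^{\Delta_{[0,1]}})=K$, whose only Maurer--Cartan element is $0$, with automorphism group $H^0(K)\otimes\m_A\cong\m_A$. Meanwhile $\Tot(\Del_{\g^{\Delta_{[0,1]}}}(A))$ is the trivial groupoid.

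There is a second problem. Your reduction to $T\equiv0$ quotes a $2$-homotopy that lives only in $\g_0[\xi,s,d\xi,ds]$, and you never extend it to the $\g_1$-component.

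Both problems are fixed at the gauge level:
\begin{itemize}
\item An automorphism of $\eta$ is the class of $e^{\mathbf a}$ with $\mathbf a=(a_0,a_1)\in\Tot(\g^{\Delta_{[0,1]}})^0\otimes\m_A$, and $\Phi_1(e^{\mathbf a})=e^{a_0}$.
\item Suppose $a_0=du+[x,u]$. Lift $u$ to $U\in\Tot(\g^{\Delta_{[0,1]}})^{-1}\otimes\m_A$, which is possible because the projection to $\g_0$ is surjective. Replace $\mathbf a$ by $\mathbf a\bullet-(dU+[\eta,U])$, which represents the same class.
\item Now $\mathbf a\in K^0\otimes\m_A$ and $d\mathbf a+[\eta,\mathbf a]=0$.
\item Since $H^0(K)\cong H^{-1}(\g_1)=0$, an $\m_A$-adic filtration argument gives $\mathbf a=dV+[\eta,V]$ with $V\in K^{-1}\otimes\m_A$. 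Hence $e^{\mathbf a}\in I_A(\eta)$.
\end{itemize}

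\textbf{Your ``cleaner alternative'' is exactly the paper's proof, but you misstate what must be checked.} In Lemma~\ref{lemma equivalenza funtori con diagramma} the faithfulness hypothesis is on the bottom arrow. Here that arrow is $\Id\colon\Del_{\g_0}(A)\to\Del_{\g_0}(A)$, so it holds trivially. The substantive input is hypothesis (2) for $\Id_{[0,0]}\colon\Tot(\g^{\Delta_{[0,1]}})\to\g_0$: surjectivity on $H^{-1}$ and injectivity on $H^0$. Both follow from the long exact sequence of $0\to K\to\Tot(\g^{\Delta_{[0,1]}})\to\g_0\to0$ once $H^0(K)\cong H^{-1}(\g_1)=0$. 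Combined with the essential surjectivity and fullness you already have, the lemma gives faithfulness with no loop analysis at all. At this stage only $H^{-1}(\g_1)=0$ is used, in line with Remark~\ref{rem.hyp-ott}.
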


	\begin{proof} This is a generalisation of   \cite[Proposition 4.3]{FIM} to groupoids. 
		For any $A \in \Art_\kk$, consider the diagram of groupoids
		\[ \xymatrix{  \Del_{\Tot(\g^{\Delta_{[0,1]}})}(A) \ar[r]^{\Phi_1} \ar[d]^{\Id_{[0,0]}} &    \Tot( \Del_{\g^{\Delta_{[0,1]}}}(A))\ar[d]^{\Id_{[0,0]}}  \\
			\Del_{\g_0}(A) \ar[r]^{\Id} &  \Del_{\g_0}(A),}\] 
			that is obviously commutative on the objects and  morphisms.

The morphism $\Phi_1$ is essentially surjective: for any object  $(l, e^{ m})$ in 
		 $\Tot( \Del_{\g^{\Delta_{[0,1]}}}(A))$ there exists  the Maurer--Cartan element  $( l, e^{mt}* \partial_{0,1}l) \in\Tot(\g^{\Delta[0,1]})(A)$ such that  $\Phi_1 ( l, e^{mt}* \partial_{0,1}l)=(l, e^{ m})$.
	 
		Let $a$ be a morphism between $(l_0,e^{m_0})$ and $(l_1,e^{m_1})$ in $\Tot( \Del_{\g^{\Delta_{[0,1]}}}(A))$ and let    $ b \in \g_1^{-1} \otimes \m_A$  such that $- m_0\bullet -\partial_{1,1}a \bullet m_1
		\bullet \partial_{0,1}a=db+[\partial_{0,1}l_0,b].$  
Then, the element
\begin{equation}\label{equazione esplicita omotopia che solleva gauge}
			z(\xi, d\xi) = \left(e^{\xi a}* l_0, e^{t\left(\partial_{11}(\xi a) \bullet m_0 \bullet (d(\xi b) + [\partial_{01}l_0, \xi b]) \bullet - \partial_{01}(\xi a)\right) \bullet \partial_{01}(\xi a)} * \partial_{01}l_0 \right);  
		\end{equation}
		is a homotopy between  the liftings $(l_0,e^{tm_0}* \partial_{01}l_0)$ and $(l_1,e^{tm_1}* \partial_{01}l_1)$ in  $ \Del_{\Tot(\g^{\Delta[0,1]})}(A)$
		(see \cite[Proposition 4.3]{FIM}), such that   $\Phi_1( z(\xi, d\xi)) =a$; therefore,  $\Phi_1$ is full.
		
		Finally, consider the morphism  $\Id_{[0,0]}:  \g^{\Delta_{[0,1]}}\to
\g^{\Delta_{[0,0]}} =\g_0$, that is the identity on $\g_0$ and zero otherwise, and the induced morphism $\Id_{[0,0]}: 
\Tot (\g^{\Delta_{[0,1]}})  \to \Tot (\g^{\Delta_{[0,0]}}) =\g_0 $. Note that $ H^{-1}(\Tot (\g^{\Delta_{[0,1]}}) )=H^{-1}(  \g_0)= 0$ and  
$ \Id_{[0,0]}: H^{0}(\Tot (\g^{\Delta_{[0,1]}}) ) \to H^{0}(  \g_0)$ is injective, so applying Lemma~\ref{lemma equivalenza funtori con diagramma}, we conclude that $\Phi_1$ is an equivalence of groupoids for every $A\in \Art_\kk$. Moreover, as noticed it is a pseudo-natural transformation, then $\Phi_1$ is an equivalence of functors as required.
 \end{proof}

Next, we would like to extend the previous result to the dgLa 	$\Tot (\g^{\Delta_{[0,2]}}) $. Generalising  \cite[Proposition 4.5]{FIM}, we introduce the pseudo-natural transformation:
 \[ \Phi_2: \Del_{\Tot (\g^{\Delta_{[0,2]}}) }   \to \Tot( \Del_{\g^{\Delta}})\]
defining, for every $A \in \Art_\kk$, the functor of groupoids 
 \[ \Phi_2: \Del_{\Tot (\g^{\Delta_{[0,2]}}) } (A)  \to \Tot( \Del_{\g^{\Delta}}(A))\]
as follows.

According to Remark~\ref{rem.elemMC-tronc}, we define $\Phi_2$ on objects as
\[
\Phi_2( x, e^{p(t) }* \partial_{0,1} x, e^{r(t,s,ds) }* \partial_{0,2}\partial_{0,1}x ) =  (x, e^{p(1)}),
\]
which is well-defined by \cite[Proposition 4.5]{FIM}. 
A   morphism  $z(\xi, d\xi)$ between $\eta_0=(x_0, e^{p_0(t)} * \partial_{0,1}x_0, e^{r_0(t,s,ds) }* \partial_{0,2}\partial_{0,1}x_0)$ and  $\eta_1=(x_1, e^{p_1(t)} *\partial_{0,1}x_1, e^{r_1(t,s,ds) }* \partial_{0,2}\partial_{0,1}x_1)$, has the form
\[ z(\xi, d\xi)
= 	\left(e^{ T(\xi)}*x, e^{U(t,dt,\xi,d\xi)}e^{p(t) } * \partial_{0,1}x, e^{V(t,s,\xi,dt,ds,d\xi)}e^{r(t,s,ds) }* \partial_{0,2}\partial_{0,1}x\right);\]  
thus, we define 
\[\Phi_2( z(\xi, d\xi)) = e^{T(1)}.\]
Since the image involves only $T(\xi)$, it is possible to prove that $\Phi_2$ is well defined on morphisms using the same arguments used for $\Phi_1$.
It is easy to check that it is a pseudo-natural transformation of functors. 

	\begin{proposition} \label{propo equivalenza in[0,2]}
		Let $\g^\Delta$ be a semicosimplicial dgLa such that $H^j(\g_i)=0$ for all $i\geq 0$ and $j<0$.
		Then, 
	\[ \Phi_2: \Del_{\Tot (\g^{\Delta_{[0,2]}}) }   \to \Tot( \Del_{\g^{\Delta}})\]
		 is an equivalence of functors.
		
	\end{proposition}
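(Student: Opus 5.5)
The plan is to mirror the proof of Proposition~\ref{propo equivalenza in[0,1]} and use Lemma~\ref{lemma equivalenza funtori con diagramma}. For each $A\in\Art_\kk$ consider the square
\[ \xymatrix{  \Del_{\Tot(\g^{\Delta_{[0,2]}})}(A) \ar[r]^{\Phi_2} \ar[d]^{\Id_{[0,0]}} &    \Tot( \Del_{\g^{\Delta}}(A))\ar[d]  \\
			\Del_{\g_0}(A) \ar[r]^{\Id} &  \Del_{\g_0}(A),}\]
where the right vertical arrow forgets $m$ and sends $(l,e^m)\mapsto l$. The square commutes strictly on objects and morphisms, since $\Phi_2$ only retains $x$ and $T(1)$. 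The bottom map is the identity, hence faithful. I will take $f=\Id_{[0,0]}\colon \Tot(\g^{\Delta_{[0,2]}})\to\g_0$. Using the hypothesis $H^j(\g_i)=0$ for $j<0$, the spectral sequence of the double complex gives $H^{-1}(\Tot(\g^{\Delta_{[0,2]}}))=0$ and $H^{-1}(\g_0)=0$, so surjectivity in degree $-1$ is automatic. Moreover $H^0(\Tot(\g^{\Delta_{[0,2]}}))=\ker(H^0(\g_0)\to H^0(\g_1))$, the difference of the two coface maps, which injects into $H^0(\g_0)$. It therefore remains to prove that $\Phi_2$ is essentially surjective and full.

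\textbf{Essential surjectivity.} Take an object $(l,e^m)$ of $\Tot(\Del_{\g^\Delta}(A))$. By Example~\ref{esempio Deligne simpliciale} there is $u\in\g_2^{-1}\otimes\m_A$ with $\partial_{0,2}m\bullet-\partial_{1,2}m\bullet\partial_{2,2}m=du+[\partial_{2,2}\partial_{0,1}l,u]$. I will set $p(t)=tm$ and look for $r(t,s,ds)$ satisfying the face conditions \eqref{eq.face conditions}. This is exactly the set-valued construction of \cite[Proposition 4.5]{FIM}: interpolate $r$ on the boundary of the $2$-simplex by $\partial_{0,2}(tm)$ and $\partial_{1,2}(tm)$, and absorb the defect of the cocycle condition on the third face with a term of the form $ts\,\cdot$ and $t\,ds\cdot u$. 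The last condition in \eqref{eq.face conditions} then holds by the remark that irrelevant stabilisers fix the Maurer--Cartan element. I will quote \cite[Proposition 4.5]{FIM} for the explicit formula and check only that its image under $\Phi_2$ is $(l,e^{m})$. At worst the image differs from $(l,e^m)$ by an irrelevant-stabiliser factor, which is the identity class.

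\textbf{Fullness.} This is the main obstacle. Given a morphism $a$ from $\Phi_2(\eta_0)$ to $\Phi_2(\eta_1)$ with witness $b\in\g_1^{-1}\otimes\m_A$, I must produce a Maurer--Cartan element $z(\xi,d\xi)$ of $\Tot(\g^{\Delta_{[0,2]}})[\xi,d\xi]$ joining $\eta_0$ to $\eta_1$ with $T(1)=a$. Rather than constructing the $\g_2$ component by hand, I will reduce to the $[0,1]$ case. By Proposition~\ref{propo equivalenza in[0,1]}, formula \eqref{equazione esplicita omotopia che solleva gauge} already gives a homotopy $z_1$ in $\Tot(\g^{\Delta_{[0,1]}})$ between the truncations of $\eta_0$ and $\eta_1$, up to replacing $p_i$ by $tp_i(1)$ via Example~\ref{esempio omotoy p(t) con p(1) t}. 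I then need to extend $z_1$ over $\g_2$. Consider the restriction $\Tot(\g^{\Delta_{[0,2]}})\to\Tot(\g^{\Delta_{[0,1]}})$, whose kernel is $\Omega_2\otimes\g_2$ with vanishing boundary conditions. Its cohomology is $\g_2$ shifted by $2$, so in degree $i$ it equals $H^{i-2}(\g_2)$. The hypothesis $H^{<0}(\g_2)=0$ makes $H^{\le 1}$ of the kernel vanish, and then $\Tot(\g^{\Delta_{[0,2]}})\to\Tot(\g^{\Delta_{[0,1]}})$ is surjective with kernel acyclic in degrees $\le1$. A standard obstruction argument (Manetti's theory of Maurer--Cartan lifting along surjections with kernel cohomology vanishing in degrees $0,1$, applied to the path dgLa) lifts the homotopy $z_1$, with its endpoints fixed at $\eta_0$ and $\eta_1$, to $z$. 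By construction $T(1)=a$. If this lifting is delicate, I would instead write the $\g_2$-component explicitly as $e^{V}$, with $V$ built from $\partial_{k,2}$ of the data in \eqref{equazione esplicita omotopia che solleva gauge} together with a correction in $\g_2^{-1}$, and verify the face conditions using Lemma~\ref{lemma stab is irrelevant}.

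With essential surjectivity and fullness established, Lemma~\ref{lemma equivalenza funtori con diagramma} shows that $\Phi_2(A)$ is an equivalence for every $A\in\Art_\kk$. Since $\Phi_2$ is pseudo-natural, it is an equivalence of functors.
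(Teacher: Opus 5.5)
Your proof is correct and follows the paper's skeleton: Lemma~\ref{lemma equivalenza funtori con diagramma}, essential surjectivity from \cite{FIM}, and fullness by extending the explicit $[0,1]$-homotopy over $\g_2$. (The paper cites \cite[Proposition 4.6]{FIM} for essential surjectivity; Proposition~4.5 there is what makes $\Phi_2$ well defined on objects.) Two steps are done differently.

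First, the choice of square. The paper puts $\Id_{[0,1]}$ on the sides and $\Phi_1$ on the bottom. It therefore needs faithfulness of $\Phi_1$ from Proposition~\ref{propo equivalenza in[0,1]}, and a comparison of $H^{-1},H^0$ of $\Tot(\g^{\Delta_{[0,1]}})$ and $\Tot(\g^{\Delta_{[0,2]}})$. You project straight to $\Del_{\g_0}(A)$ with the identity on the bottom. Faithfulness of $h$ is then trivial, and condition (2) reduces to injectivity of the edge map $H^0(\Tot(\g^{\Delta_{[0,2]}}))\to H^0(\g_0)$. This is slightly more economical and equally valid.

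Second, the extension over $\g_2$. The paper quotes \cite[Proposition 4.9]{FIM}; you use obstruction theory. Your argument works, but state precisely what is lifted:
\begin{itemize}
\item The map $w\mapsto(\Id_{[0,1]}(w),w(0),w(1))$ goes from $\Tot(\g^{\Delta_{[0,2]}})[\xi,d\xi]$ to the fibre product of $\Tot(\g^{\Delta_{[0,1]}})[\xi,d\xi]$ with two copies of $\Tot(\g^{\Delta_{[0,2]}})$ over the endpoint evaluations.
\item This map is surjective.
\item Its kernel is $\Omega_2^{\mathrm{rel}}\otimes\g_2\otimes\Omega_1^{\mathrm{rel}}$ (forms vanishing on the boundary), with cohomology $H^{*-3}(\g_2)$.
\item The small-extension induction for lifting Maurer--Cartan elements only needs $H^2$ of this kernel to vanish, i.e.\ $H^{-1}(\g_2)=0$.
\end{itemize}

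What your route buys is that the lift visibly restricts to the chosen $[0,1]$-homotopy, so $T(1)=a$ is automatic. The lemma the paper quotes, as stated, only gives some gauge equivalence in $\Tot(\g^{\Delta_{[0,2]}})$, and one must look into its proof to see that it extends $(a_0,a_1)$. The paper's route, in exchange, is explicit.
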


	\begin{proof} 
		We apply Lemma~\ref{lemma equivalenza funtori con diagramma} to the diagram of groupoids:
 		\[ \xymatrix{  \Del_{\Tot(\g^{\Delta_{[0,2]}})}(A) \ar[r]^{\Phi_2} \ar[d]^{\Id_{[0,1]}} &   \Tot( \Del_{\g^{\Delta}}(A))   \ar[d]^{\Id_{[0,1]}} \\
			\Del_{\Tot(\g^{\Delta_{[0,1]}})}(A) \ar[r]^{\Phi_1} &  \Tot( \Del_{\g^{\Delta_{[0,1]}}}(A)) ,} \]
that clearly commutes on objects and morphisms.

Because of Proposition~\ref{propo equivalenza in[0,1]}, $\Phi_1$ is faithful.

By looking at the spectral sequences associated to the double complexes of $\g^\Delta$ and $\g^{\Delta_{[0,2]}}$ and taking into account the hypothesis that $H^j(\g_i)=0$  for all $i\geq 0$ and $j<0$, it is easy to check that $H^0(\Tot(\g^{\Delta_{[0,1]}}))=H^0(\Tot(\g^{\Delta_{[0,2]}}))$ and $H^{-1}(\Tot(\g^{\Delta_{[0,1]}}))= H^{-1}(\Tot(\g^{\Delta_{[0,2]}})) =0$. Then the morphism of dgLas 
			\[\Id_{[0,1]}:  \g^{\Delta_{[0,2]}} \to \g^{\Delta_{[0,1]}}\]
			satisfies hypothesis (2) of Lemma~\ref{lemma equivalenza funtori con diagramma}.

The functor $\Phi_2$ is essentially surjective by \cite[Proposition 4.6]{FIM}.
			
To prove it is full, let $\eta_0$ and $\eta_1$ be two objects in $\Del_{\Tot(\g^{\Delta_{[0,2]}})}(A)$,  they can be written as
			\[   \eta_i=\left(x_i, e^{p_i(t) }* \partial_{01} x_i, e^{ r_i(t,s,ds) }* \partial_{02}\partial_{01}x_i \right)  \]
			where the elements $x_i, p_i, r_i$ are uniquely determined and satisfy the relations in \eqref{eq.face conditions}.
			Let $a$ be a morphism in  $\Tot( \Del_{\g^{\Delta}}(A))$ between $\Phi_2(\eta_0)= (x_0, e^{p_0(1)})$ and $\Phi_2(\eta_1)=(x_1, e^{p_1(1)})$.  The aim is to construct a morphism $\widetilde{z}$  between $\eta_0$ and $\eta_1$ in  $\Del_{\Tot(\g^{\Delta_{[0,2]}})}(A)$, such that $\Phi_2(\widetilde{z})=a$. 
			
			Combining the homotopy in Equation \eqref{equazione esplicita omotopia che solleva gauge}  and Example~\ref{esempio omotoy p(t) con p(1) t} (see also  \cite{FIM}), it is possible to construct a homotopy $z(\xi, d\xi)$ between $(x_0, e^{p_0(t)}* \partial_{01} x_0)$ and $(x_1, e^{p_1(t)}* \partial_{01} x_1)$ in $\Del_{\Tot(\g^{\Delta[0,1]})}(A)$, such that $\Phi_2(z) =a$. 
			
 The conclusion follows by the next lemma and by noting that $\Phi_2$ is a pseudo-natural transformation of functors.
	\end{proof}

	\begin{lemma}\cite[Proposition 4.9]{FIM}
		Let $(x_0, x_1, x_2), (x'_0, x'_1, x'_2)$ be Maurer--Cartan elements of the dgLa $\Tot(\g^{\Delta_{[0,2]}})(A)$. Suppose the corresponding Maurer--Cartan element $(x_0,x_1), (x_0', x_1')$ in  $\Tot(\g^{\Delta_{[0,1]}})(A)$ are gauge  equivalent via $(a_0, a_1) \in \Tot^0(\g^{\Delta_{[0,1]}})(A)$. 
		Then, $(x_0, x_1, x_2)$ and $(x'_0, x'_1, x'_2)$ are gauge equivalent in $\Tot(\g^{\Delta_{[0,2]}})(A)$ too. 
		
	\end{lemma}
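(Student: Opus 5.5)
The plan is to reduce to the case where the two Maurer--Cartan elements agree on the truncation $\g^{\Delta_{[0,1]}}$, and then kill the remaining difference in the $\g_2$-component by an obstruction argument. That argument uses the hypothesis $H^{-1}(\g_2)=0$ together with the fact that polynomial forms on $\Delta^2$ vanishing on $\partial\Delta^2$ have cohomology only in degree $2$.

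\emph{Step 1 (lifting the gauge).} I will first show that the projection $\Id_{[0,1]}\colon \Tot^0(\g^{\Delta_{[0,2]}})\otimes\m_A\to \Tot^0(\g^{\Delta_{[0,1]}})\otimes\m_A$ is surjective. Given $(a_0,a_1)$, the compatibility conditions prescribe the restrictions of a degree-zero element $a_2\in\Omega_2\otimes\g_2\otimes\m_A$ to the three edges of $\Delta^2$. These prescriptions agree at the vertices, because they come from $a_0$ via the semicosimplicial identities. The standard extension property of polynomial forms from $\partial\Delta^n$ to $\Delta^n$ (surjectivity of $\Omega_n\to$ forms on the boundary, as in \cite[Chapter 7]{Manetti.LMDT}) then gives such an $a_2$. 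Replacing $(x_0,x_1,x_2)$ by $e^{(a_0,a_1,a_2)}*(x_0,x_1,x_2)$, I may assume $x_0=x_0'$ and $x_1=x_1'$. Then $x_2,x_2'\in\MC(\Omega_2\otimes\g_2\otimes\m_A)$ have the same restriction to $\partial\Delta^2$. It remains to find $c\in\Omega_2^{\ast}\otimes\g_2\otimes\m_A$ of total degree $0$, vanishing on $\partial\Delta^2$, with $e^c*x_2=x_2'$. Then $(0,0,c)$ is the required gauge in $\Tot(\g^{\Delta_{[0,2]}})$.

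\emph{Step 2 (induction on small extensions).} Let $K\subset\Omega_2$ be the dg ideal of forms vanishing on $\partial\Delta^2$. Its cohomology is $\kk$ concentrated in degree $2$, so $H^j(K\otimes\g_2)\cong H^{j-2}(\g_2)$. In particular $H^1(K\otimes\g_2)\cong H^{-1}(\g_2)=0$. I argue by induction on the length of $A$, using a small extension $0\to J\to A\to B\to 0$ with $\m_A J=0$. By induction there is $\bar c$ vanishing on the boundary with $e^{\bar c}*x_2\equiv x_2'$ modulo $J$. I lift $\bar c$ to $c'$ still in $K\otimes\g_2\otimes\m_A$, which is possible since $K\otimes\g_2\otimes\m_A\to K\otimes\g_2\otimes\m_B$ is surjective. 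Then $\delta:=x_2'-e^{c'}*x_2$ lies in $K\otimes\g_2\otimes J$, since both agree on the boundary. Because $\m_AJ=0$, the Maurer--Cartan equations for the two elements give $d\delta=0$, where $d$ is the untwisted differential. As $H^1(K\otimes\g_2)\otimes J=0$, we can write $\delta=-dh$ with $h\in K^{\ast}\otimes\g_2\otimes J$ of degree $0$. Since $[h,\cdot]$ vanishes on $\m_A$, the gauge satisfies $e^{h}*y=y-dh$ for any Maurer--Cartan element $y$. Hence $c=h\bullet c'=h+c'$ works and still vanishes on $\partial\Delta^2$.

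The main obstacle is making Step 2 rigorous inside the Thom--Whitney complex. One must check that the relevant complex is exactly $K\otimes\g_2$, with the correct identification of its cohomology with $H^{\ast-2}(\g_2)$ by integration over $\Delta^2$. One must also check that the gauge correction preserves the first two components, which holds because $c$ restricts to zero on every face. A secondary point is that Step 1 really yields an element of $\Tot^0$, i.e. that the face compatibilities $\delta^{k,2}a_2=\partial_{k,2}a_1$ are consistent at the vertices. This follows from the semicosimplicial identities applied to $a_0$.
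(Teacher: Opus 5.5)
Your proof is correct. The lift $a_2$ exists by extendability of $\Omega_\bullet$. The kernel of $\Id_{[0,1]}\colon \Tot(\g^{\Delta_{[0,2]}})\to\Tot(\g^{\Delta_{[0,1]}})$ is exactly $K\otimes\g_2$, with $H^\ast(K)=\kk[-2]$. In the small-extension step every bracket involving $h$ vanishes because $\m_A J=0$, so $c=h+c'$ works.

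The paper gives no proof of its own here: the lemma is quoted from \cite{FIM} and is used only in the fullness step of Proposition~\ref{propo equivalenza in[0,2]}. So there is no internal argument to compare yours with step by step.

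Your route has real advantages. Because $\Id_{[0,1]}$ is surjective and you lift the given gauge exactly, you never have to lift stabilisers or use twisted cohomology. Everything reduces to the untwisted vanishing $H^1(K\otimes\g_2)\cong H^{-1}(\g_2)=0$. Note also that the cohomological criterion of Lemma~\ref{lemma LMDT H-1 H0 H1 equivalenza gpd}, which the paper uses in Proposition~\ref{prop.tre suff}, is not available here. In general $\Id_{[0,1]}$ is not surjective on $H^1$, since its cokernel embeds into $H^2(K\otimes\g_2)\cong H^0(\g_2)$. So some argument of your kind is genuinely needed.

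Two further points work in your favour.

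First, you make explicit that only $H^{-1}(\g_2)=0$ is used, consistent with Remark~\ref{rem.hyp-ott}. This hypothesis is missing from the displayed statement, but it cannot be dropped. Take $\g_2=V$ concentrated in degree $-1$ and all other terms zero. Then $\Tot(\g^{\Delta_{[0,2]}})=K\otimes V$, and every $\omega\otimes v\otimes a$ with $\omega\in\Omega_2^2$ is Maurer--Cartan and maps to $0$ in $\Tot(\g^{\Delta_{[0,1]}})=0$. By Stokes, $\int_{\Delta^2}$ is gauge invariant, so such an element is not gauge equivalent to $0$ when $\int_{\Delta^2}\omega\neq 0$ and $v\otimes a\neq 0$.

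Second, your argument proves more than the statement. The gauge $(0,0,c)\bullet(a_0,a_1,a_2)$ projects exactly onto $(a_0,a_1)$. This lifting form, not just the existence of some gauge equivalence, is what the fullness step of Proposition~\ref{propo equivalenza in[0,2]} actually needs to produce $\widetilde z$ with $\Phi_2(\widetilde z)=a$.
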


	 \bigskip
	 	
 Finally, we are able to generalise Hinich's Theorem on descent of Deligne groupoids of  \cite{hinich} and \cite{FIM}. For the reader's convenience, we restate them in our notation.  
		
		\begin{theorem}[{\cite[Section 4]{hinich}}]\label{teorema di hinich}
			Let $\g^{\Delta}$ be a semicosimplicial dgLa such that $\g^j_i=0$ for all $i \geq 0$ and all $j <0$.
Then there is an equivalence of functors  \[ \Del_{\Tot (\g^{\Delta }) }   \to    \Tot(\Del_{\g^{\Delta }}).\]
		\end{theorem}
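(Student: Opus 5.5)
The statement is Hinich's theorem, and it is the special case of the groupoid-level results just established. If $\g^j_i=0$ for all $j<0$, then trivially $H^j(\g_i)=0$ for all $j<0$. So the hypotheses of Propositions~\ref{prop.tre suff} and \ref{propo equivalenza in[0,2]} are satisfied, and my plan is to deduce the theorem from these two propositions.

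The equivalence is obtained as a composite of pseudo-natural transformations:
\[
\Del_{\Tot(\g^{\Delta})} \xrightarrow{\ \Id_{[0,2]}\ } \Del_{\Tot(\g^{\Delta_{[0,2]}})} \xrightarrow{\ \Phi_2\ } \Tot(\Del_{\g^{\Delta}}).
\]
The first arrow is the functor induced by the morphism of dgLas $\Id_{[0,2]}\colon \Tot(\g^{\Delta})\to\Tot(\g^{\Delta_{[0,2]}})$, and it is an equivalence by Proposition~\ref{prop.tre suff}. The second arrow is an equivalence by Proposition~\ref{propo equivalenza in[0,2]}. A composite of pseudo-natural transformations that are objectwise equivalences of groupoids is again such a transformation, so the composite is the required equivalence.

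On objects the composite sends a Maurer--Cartan element $(x_n)_n$ to $(x_0,e^{p(1)})$, where $x_1=e^{p(t)}*\partial_{0,1}x_0$. This matches Hinich's map, which is a useful consistency check.

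Since everything is assembled from the preceding propositions, I expect no real obstacle. The one point to verify is that Proposition~\ref{prop.tre suff} rests on Lemma~\ref{lemma LMDT H-1 H0 H1 equivalenza gpd}. Concretely, one must check that the maps $H^i(\Tot(\g^\Delta))\to H^i(\Tot(\g^{\Delta_{[0,2]}}))$ are bijective for $i=0,1$, injective for $i=2$, and that the $H^{-1}$ condition holds. This follows from the spectral sequence of the double complex once the columns have no negative cohomology. In the present strict setting it is even easier: the total complexes vanish in degrees $\le -1$, and truncating above simplicial level $2$ only affects total degrees $\ge 3$ (to $H^3$ and above) and injectivity at $H^2$.

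Alternatively, one can simply cite \cite{hinich}. The argument above shows that Hinich's theorem is recovered as a corollary of the generalised statement proved next.
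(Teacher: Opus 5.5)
Your proposal is correct. For this statement the paper only cites Hinich's original result, but your argument is exactly the paper's own proof of Theorem~\ref{generalized hinich theorem}: the composite of $\Id_{[0,2]}$ (Proposition~\ref{prop.tre suff}) with $\Phi_2$ (Proposition~\ref{propo equivalenza in[0,2]}), specialised through the trivial implication $\g_i^j=0\Rightarrow H^j(\g_i)=0$ for $j<0$. Neither proposition uses Hinich's theorem, so there is no circularity. Moreover, here all dgLas involved are non-negatively graded, so the irrelevant stabilisers are trivial and the reduced Deligne groupoid coincides with Hinich's action groupoid.

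One phrase is misworded. Since the kernel of the truncation has cohomology only in total degrees $\geq 3$, you get isomorphisms on $H^{\leq 1}$ and injectivity on $H^2$, which is what Lemma~\ref{lemma LMDT H-1 H0 H1 equivalenza gpd} needs. What truncation can spoil at $H^2$ is therefore surjectivity, not injectivity.
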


\begin{theorem}[{\cite[Theorem 4.10]{FIM}}]\label{teorema FIM}
			Let $\g^{\Delta}$ be a semicosimplicial dgLa with  $H^j(\g_i) = 0$ for all $i \geq 0$ and $j < 0$. Then there is a natural isomorphism of functors in sets    $\pi_0(\Del_{\Tot (\g^{\Delta})}) \cong  \pi_0(\Tot(\Del_{\g^{\Delta}}))$.
		\end{theorem}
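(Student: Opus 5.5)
The plan is to deduce the statement from the groupoid-level results already established in this section, and then pass to isomorphism classes. An equivalence of groupoids induces a bijection on $\pi_0$, and a pseudo-natural transformation of functors $\Art_\kk\to\Grpds$ induces an honest natural transformation of the $\Set$-valued functors $\pi_0(-)$. It therefore suffices to produce a pseudo-natural equivalence $\Del_{\Tot(\g^{\Delta})}\to \Tot(\Del_{\g^{\Delta}})$ and apply $\pi_0$ objectwise.

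\textbf{Step 1: reduce to the truncation.} By Proposition~\ref{prop.tre suff}, the hypothesis $H^j(\g_i)=0$ for $j<0$ makes
\[
\Id_{[0,2]}\colon \Del_{\Tot(\g^{\Delta})}\longrightarrow \Del_{\Tot(\g^{\Delta_{[0,2]}})}
\]
an equivalence of functors. It is induced by a morphism of dgLas, so it is (strictly) natural in $A$.

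\textbf{Step 2: compare with the total groupoid.} By Proposition~\ref{propo equivalenza in[0,2]},
\[
\Phi_2\colon \Del_{\Tot(\g^{\Delta_{[0,2]}})}\longrightarrow \Tot(\Del_{\g^{\Delta}})
\]
is a pseudo-natural equivalence under the same hypothesis. The target genuinely involves the full $\g^{\Delta}$ only through $\g_0,\g_1,\g_2$, since Definition~\ref{def. total grpds} uses only levels $0$, $1$ and $2$. So no further truncation is needed on that side.

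\textbf{Step 3: compose and pass to $\pi_0$.} The composite $\Phi=\Phi_2\circ \Id_{[0,2]}$ is a pseudo-natural equivalence, and on objects it is explicitly
\[
(x_n)_n\mapsto (x,e^{p(1)}),
\]
with $x,p$ as in Remark~\ref{rem.elemMC-tronc}. For each $A$, $\pi_0(\Phi(A))$ is a bijection. For a morphism $A\to B$ in $\Art_\kk$, the two composites $\pi_0(\Del_{\Tot(\g^\Delta)}(A))\to\pi_0(\Tot(\Del_{\g^\Delta}(B)))$ differ by a natural isomorphism of functors of groupoids. Naturally isomorphic functors agree on $\pi_0$, so these maps coincide and we obtain a natural isomorphism of $\Set$-valued functors.

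The main obstacle is not in this deduction, which is formal. It lies in the inputs, above all the fullness and essential surjectivity of $\Phi_2$. There one must lift a gauge/homotopy datum on levels $0,1$ to level $2$ using the cocycle condition up to irrelevant stabilisers; this is exactly where the vanishing of negative cohomology is used, both via the lemma from \cite[Proposition 4.9]{FIM} and via the spectral sequence computation of $H^{-1}$ and $H^0$ of the truncations. Since those results are stated earlier, I would simply invoke them.
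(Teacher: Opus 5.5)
Your proposal is correct and follows essentially the same route: the paper gives no separate proof and cites \cite[Theorem 4.10]{FIM}, which is proved at the level of sets by the same truncation to $\g^{\Delta_{[0,2]}}$ followed by the comparison $x\mapsto (x,e^{p(1)})$. Your deduction is exactly $\pi_0$ applied to the proof of Theorem~\ref{generalized hinich theorem} via Propositions~\ref{prop.tre suff} and~\ref{propo equivalenza in[0,2]}, and it is not circular, because Proposition~\ref{propo equivalenza in[0,2]} rests on \cite[Propositions 4.5, 4.6, 4.9]{FIM} rather than on \cite[Theorem 4.10]{FIM} itself.
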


The next theorem is our generalisation. 

	\begin{theorem} \label{generalized hinich theorem}
		Let $\g^\Delta$ be a semicosimplicial dgLa such that $H^j(\g_i)=0$ for all $i\geq 0$ and $j<0$.
		Then, there is an equivalence of functors
		\[ \Phi: \Del_{\Tot (\g^{\Delta }) }   \to   \Tot(\Del_{\g^{\Delta }}).\]
 
	\end{theorem}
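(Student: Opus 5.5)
The plan is to define $\Phi$ as a composite of the two equivalences already established. Concretely, for every $A\in\Art_\kk$ set
\[
\Phi=\Phi_2\circ \Id_{[0,2]}\colon \Del_{\Tot(\g^{\Delta})}(A)\longrightarrow \Del_{\Tot(\g^{\Delta_{[0,2]}})}(A)\longrightarrow \Tot(\Del_{\g^{\Delta}}(A)).
\]
Explicitly, a Maurer--Cartan element $(x_n)_n$ of $\Tot(\g^\Delta)\otimes\m_A$ has components $x_0=x\in\MC_{\g_0}(A)$ and $x_1=e^{p(t)}*\partial_{0,1}x$, and it is sent to $(x,e^{p(1)})$. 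A morphism, represented via Theorem~\ref{teo gauge= homot in gruppoidi} by a homotopy with $\g_0$-component $e^{T(\xi)}*x_0$, is sent to $e^{T(1)}$. This is exactly the formula for $\Phi_1$ and $\Phi_2$ extended to the whole tower.

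The key steps are the following. First, by Proposition~\ref{prop.tre suff}, the hypothesis $H^j(\g_i)=0$ for $j<0$ ensures that $\Id_{[0,2]}$ induces an equivalence of functors $\Del_{\Tot(\g^\Delta)}\to\Del_{\Tot(\g^{\Delta_{[0,2]}})}$. Second, by Proposition~\ref{propo equivalenza in[0,2]}, $\Phi_2$ is an equivalence of functors. Third, a composite of equivalences of groupoids is an equivalence of groupoids, so $\Phi(A)$ is an equivalence for every $A$. Finally, a composite of pseudo-natural transformations is pseudo-natural, so $\Phi$ is an equivalence of pseudo-functors $\Art_\kk\to\Grpds$.

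One point is implicit in the target of $\Phi_2$. The groupoid $\Tot(\Del_{\g^{\Delta}}(A))$ (Definition~\ref{def. total grpds}) only involves $\Del_{\g_0}(A)$, $\Del_{\g_1}(A)$ and $\Del_{\g_2}(A)$. Hence it coincides with $\Tot(\Del_{\g^{\Delta_{[0,2]}}}(A))$, and no comparison with higher levels of the Deligne side is needed.

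The only real content lies in the two cited propositions, which are assumed here. The step I would check most carefully is the naturality of $\Phi$ in $A$. The maps $\Phi_2$ are defined on representatives: homotopies up to $2$-homotopy on one side, gauge classes up to irrelevant stabilisers on the other. Naturality therefore holds only up to the identifications of Theorem~\ref{teo gauge= homot in gruppoidi}, which is why $\Phi$ is a pseudo-natural transformation rather than a strictly natural one. I would verify that a morphism $A\to B$ in $\Art_\kk$ sends $T(\xi)$ to its image and commutes with evaluation at $\xi=1$. This gives strict compatibility on representatives, and passing to classes preserves it.
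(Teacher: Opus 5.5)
Your proposal is correct and follows the paper's proof: there too the equivalence is the composite of the equivalence $\Del_{\Tot(\g^{\Delta})}\to\Del_{\Tot(\g^{\Delta_{[0,2]}})}$ induced by $\Id_{[0,2]}$ (Proposition~\ref{prop.tre suff}) with $\Phi_2$ (Proposition~\ref{propo equivalenza in[0,2]}). Your remarks that the total groupoid only involves levels $0,1,2$ and on pseudo-naturality are accurate; the paper leaves them implicit.
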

	\begin{proof}
		By  Proposition~\ref{prop.tre suff}, $ \Del_{\Tot (\g^{\Delta }) }$ is equivalent to  $\Del_{\Tot (\g^{\Delta_{[0,2]}})}$ and then, by Proposition~\ref{propo equivalenza in[0,2]}, it is equivalent to   $ \Tot(\Del_{\g^{\Delta }}) $.  
	\end{proof}

\begin{remark}\label{rem.hyp-ott}
	It is easy to construct examples of semicosimplicial dgLas $\g^\Delta$ with non-trivial negative cohomology such that the previous result does not hold: for instance, a semicosimplicial  dgLa $\g^\Delta$  vanishing everywhere except for $\g_2$ in degree $-1$. However, by looking at the spectral sequences associated to $\g^{\Delta_{[0,0]}}$, $\g^{\Delta_{[0,1]}}$, $\g^{\Delta_{[0,2]}}$ and $\g^\Delta$ it is easy to see that all our proofs work for semicosimplicial dgLas $\g^\Delta$ such that $H^{-n}(\g_n)=0$ for all $n \geq 1$, $H^{-n+1}(\g_n)=0$ for all $n \geq 2$ and $H^{-n+2}(\g_n)=0$ for all $n \geq 3$.
	\end{remark}

\section{Review of deformations of a coherent sheaf}\label{sec.defo-sheaf}

In this section, we review the infinitesimal deformations of a coherent sheaf, extending  the computations in \cite[Section 2]{FIM} to the level of groupoids. 

Let $X$ be a smooth  variety over $\mathbb{K}$  and  $\FF$ a coherent sheaf of $\OO_X$-modules over $X$.

\begin{definition}

An \emph{infinitesimal deformation of $\mathcal{F}$ over $A\in \Art_{\kk}$} is given by a
coherent  sheaf $\mathcal{F}_A$ of $\OO_X\otimes A$-modules on
$X\times \Spec A$, flat over $A$, with a morphism of sheaves $\pi_A:
\mathcal{F}_A \to \mathcal{F}$ inducing an isomorphism
$\mathcal{F}_A\otimes_A \kk \cong \mathcal{F}$.
Two deformations $\mathcal{F}_A$ and $ \mathcal{F}_A'$ of $\mathcal{F}$ over $A$ are isomorphic if there exists an
isomorphism of sheaves $f: \mathcal{F}_A \to \mathcal{F}_A'$ that
commutes with the morphisms to $\mathcal{F}$.

\end{definition}

\begin{definition} 
The \emph{pseudo-functor of infinitesimal deformations of the sheaf $\FF$} is defined as
 	\[ \Def_{\FF} : \Art_\kk \to \Grpds,\]
 	that associates, to any $A \in \Art_\kk$, the groupoid $\Def_{\FF}(A)$, whose objects are the infinitesimal deformations of the sheaf $\FF$ over
 	$A$ and whose morphisms are the isomorphisms of them.
  Note that  
\[ \pi_0(\Def_{\FF}(A)) = \{\mbox{isomorphism classes of infinitesimal deformations of the sheaf $\FF$ over $A$}\} \]
is the classical functor of deformations in $\Set$.

\end{definition} 

The Kodaira--Spencer approach to infinitesimal deformations of
$\mathcal F$ consists in deforming the sheaf $\mathcal F$ locally
in such a way that local deformations glue together to a global
deformation of the sheaf $\mathcal{F}$. 
Since they are flat deformations, 
we can investigate them  using a bounded locally free resolution $ (\EE_\FF^{\cdot},d)$ of $\mathcal F$ (see \cite[Section 1.3]{Artin} or \cite[Theorem A.10]{Sernesi} for details of these correspondences). Let
\[ (\EE_\FF^{\cdot},d): \qquad\qquad  0\longrightarrow  \EE_\FF^{-n}
 \stackrel{d}{\longrightarrow } \cdots\stackrel{d}{\longrightarrow }
  \EE_\FF^{-1} \stackrel{d}{\longrightarrow }\EE_\FF^0 \longrightarrow 0\]
be a bounded complex of locally free sheaves such that  the following sequence is exact: 
\[0 \longrightarrow \EE_\FF^{-n} \stackrel{d}{\longrightarrow }
\cdots \stackrel{d}{\longrightarrow }  \EE_\FF^{-1}
\stackrel{d}{\longrightarrow }  \EE_\FF^0
\stackrel{d}{\longrightarrow } \mathcal F \longrightarrow  0. \]

Let ${\mathcal V}=\{ V_i \}_i$ be an affine open cover of $X$, such
that every   sheaf of the complex  $\EE_\FF^{\cdot}$ is  free on  each $V_i$; then, consider the sheaf of dgLas $\EE nd^{\cdot}(\EE_\FF^{\cdot})$  and the associated semicosimplicial dgLa 
 \[ \EE nd^{\cdot}(\EE_\FF^{\cdot})(\mathcal V)\! :\!\! \xymatrixcolsep{6mm} \xymatrix{{\prod_i\EE nd^\cdot(\EE_\FF^\cdot)(V_i)}
	\ar@<2pt>[r]\ar@<-2pt>[r] & {
		\prod_{i,j}\EE nd^\cdot (\EE_\FF^\cdot) (V_{ij})}
	\ar@<4pt>[r] \ar[r] \ar@<-4pt>[r] &
	{\prod_{i,j,k}\EE nd^\cdot (\EE_\FF^\cdot)(V_{ijk})}
	\ar@<6pt>[r] \ar@<2pt>[r] \ar@<-2pt>[r] \ar@<-6pt>[r]& \cdots}.
\]

\begin{theorem}\label{thm.dgLa def fasci coerenti}

In the above notations, there exist  equivalences   of pseudo-functors
\[  \Del_{\Tot (\EE nd^{*}(\EE_\FF^{\cdot})(\mathcal V))}  \cong \Tot(\Del_{\EE nd^{*}(\EE_\FF^{\cdot})(\mathcal V)}) \cong \Def_{\FF}.\]

\end{theorem}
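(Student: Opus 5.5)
The statement splits into two equivalences, and I would prove them separately. The first, $\Del_{\Tot(\EE nd^{*}(\EE_\FF^{\cdot})(\mathcal V))} \cong \Tot(\Del_{\EE nd^{*}(\EE_\FF^{\cdot})(\mathcal V)})$, should follow from Theorem~\ref{generalized hinich theorem}. We only need to check its hypothesis, namely that $H^j$ of each $\EE nd^\cdot(\EE_\FF^\cdot)(V_{i_0\cdots i_k})$ vanishes for $j<0$.

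Each $V_{i_0\cdots i_k}$ is affine, since $X$ is separated. On it, $\EE_\FF^\cdot$ is a bounded resolution of $\FF$ by free (hence projective) modules. Therefore
\[H^j(\Hom^\cdot(\EE_\FF^\cdot,\EE_\FF^\cdot)(V))=\Ext^j_V(\FF|_V,\FF|_V),\]
which is zero for $j<0$. The product over the indices preserves this vanishing, so the hypothesis holds.

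The second equivalence, $\Tot(\Del_{\EE nd^{*}(\EE_\FF^{\cdot})(\mathcal V)}) \cong \Def_\FF$, is the groupoid upgrade of \cite[Section 2]{FIM}. I would build a pseudo-natural transformation $\Tot(\Del)\to \Def_\FF$ by gluing, and then check that it is an equivalence.

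\emph{Objects.} An object is a pair $(l,e^m)$. Here $l=(l_i)$ and each $l_i\in\MC$ of $\End^\cdot(\EE_\FF^\cdot)(V_i)\otimes\m_A$ is a perturbation $d+l_i$ of the differential of $\EE_\FF^\cdot|_{V_i}\otimes A$. Since $l_i$ has positive-degree components, this is really a deformation of a twisted complex; I would follow the reduction used in \cite{FIM}. Flatness over $A$ and acyclicity in negative degrees are preserved by the usual filtration argument on $\m_A$. Hence $\FF_i:=H^0(\EE_\FF^\cdot|_{V_i}\otimes A,d+l_i)$ is a deformation of $\FF|_{V_i}$. The gauge element $e^{m_{ij}}$ gives an isomorphism of complexes on $V_{ij}$ between the two restrictions, hence an isomorphism $\FF_j|_{V_{ij}}\cong \FF_i|_{V_{ij}}$. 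The cocycle condition in $\Del_{\g_2}$ holds only up to $du+[\,\cdot\,,u]$, that is, up to a homotopy of the induced chain maps. This is harmless on $H^0$, so the $\FF_i$ glue to a deformation $\FF_A$.

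\emph{Morphisms.} A morphism $e^a$ induces compatible local isomorphisms, which glue to an isomorphism of deformations. Representatives differing by irrelevant stabilisers are homotopic to the identity, so they induce the same map on $H^0$. Pseudo-naturality in $A$ is routine.

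\emph{Equivalence.}
\begin{itemize}
\item Essential surjectivity: on each affine $V_i$, any deformation $\FF_A|_{V_i}$ admits a lift of the free resolution to a free resolution over $A$, namely $(\EE_\FF^\cdot\otimes A,d+l_i)$. This uses flatness and the freeness of the $\EE^k_\FF|_{V_i}$. The identity of $\FF_A$ on overlaps lifts to chain isomorphisms of these resolutions, giving the $m_{ij}$. The triple cocycle condition holds up to homotopy, which supplies the $u$ required in $\g_2$.
\item Fullness: an isomorphism of deformations lifts locally to chain isomorphisms. These are compatible on overlaps up to homotopy, which produces the element $b\in\g_1^{-1}$.
\item Faithfulness: if $e^a$ induces the identity on $H^0$, then locally it is homotopic to the identity. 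Because the resolutions are free, the homotopies and the vanishing of negative $\Ext$ place $a$ in the irrelevant stabiliser, up to the $b$-correction.
\end{itemize}

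The main obstacle I expect is this homotopy-lifting step: showing that chain maps lifting the same sheaf map differ by an exact term $dh+[l,h]$ with $h$ of degree $-1$, uniformly over $A$. I would argue by induction on the length of $A$ using small extensions. At each stage the obstruction lies in a negative $\Ext$ group, which vanishes by the computation already done for the first equivalence. Alternatively, I could deduce faithfulness and fullness from Lemma~\ref{lemma equivalenza funtori con diagramma} once the set-level bijection of \cite{FIM} is known.
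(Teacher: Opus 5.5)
Your proposal is correct and follows essentially the paper's route: the first equivalence comes from Theorem~\ref{generalized hinich theorem} via the vanishing of negative $\Ext$ on the affine pieces, and the second from gluing $\mathcal{H}^0(\EE_\FF^\cdot|_{V_i}\otimes A,d+l_i)$ along the $e^{m_{ij}}$, with essential surjectivity and fullness as in \cite{FIM} and faithfulness reduced to showing that a local automorphism inducing the identity on $\mathcal{H}^0$ lies in the irrelevant stabiliser. One correction to the step you flag: in your small-extension induction the discrepancy is a $d$-cocycle in $\EE nd^{0}(\EE_\FF^{\cdot})(V)\otimes I$ whose class lies in $H^0=\Hom(\FF|_V,\FF|_V)\otimes I$, not in a negative $\Ext$, and this class vanishes because both maps induce the same morphism on $\mathcal{H}^0$; also, since $l_i\in\EE nd^1$, $d+l_i$ is an honest differential, so no twisted complexes enter at this stage.
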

\begin{proof}
The first equivalence follows from Theorem~\ref{generalized hinich theorem}. Indeed, the negative ext-groups of a coherent sheaf are zero and so we can apply the descent of Deligne groupoids.

The second one    
is investigated in \cite[Section 2]{FIM} for functors in $\Set$. For the reader's convenience, we summarise here the main computations,  extending  them to the level of groupoids.  

We define an equivalence of pseudo-functors 
\[ \Theta: \Tot(\Del_{\EE nd^{*}(\EE_\FF^{\cdot})(\mathcal V)}) \to \Def_{\FF}\]
as follows.
Let $A \in \Art_\kk$, the functor
\[ \Theta(A): \Tot(\Del_{\EE nd^{*}(\EE_\FF^{\cdot})(\mathcal V)})(A) \to \Def_{\FF}(A)\]
associates to every object $(l,m) \in \Tot(\Del_{\EE nd^{*}(\EE_\FF^{\cdot})(\mathcal V)}) (A)$ the deformation $\FF_A$ of $\FF$ obtained in the following way. 
The first datum is a Maurer--Cartan element $l=\{l_i\}_i
\in \prod_i {\EE nd}^1(\EE_\FF^{\cdot})(V_i)\otimes \m_A$ defining, on every open set $V_i$, a deformation of the complex $(\EE_\FF^{\cdot}|_{V_i},d)$ given by
\[(\EE_\FF^{\cdot}|_{V_i}\otimes A,d + l_i).\]
 Note that the condition of being a complex
 corresponds to the Maurer--Cartan equation for $l_i$ and that, by upper semicontinuity, it is exact except possibly at
zero level.
The second datum is a class, up to irrelevant stabiliser, of an element $m=\{m_{ij}\}_{ij} \in \prod_{i,j} {\EE nd}^0(\EE_\FF^{\cdot})(V_{ij})\otimes \m_A$ which defines isomorphisms 
\[e^{m_{ij}}: (\EE_\FF^{\cdot}|_{V_{j}}\otimes A,d + l_j)|_{V_{ij}} \to
(\EE_\FF^{\cdot}|_{V_{i}}\otimes A,d + l_i)|_{V_{ij}},
\] 
between the deformed local complexes restricted to the double intersections. Note that the compatibility with the differentials is assured by the equation $   l_i|_{V_{ij}}=e^{m_{ij}}*l_j|_{V_{ij}}$ for all $i,j$ and, because of their form, these isomorphisms reduce to the identity on the residue field. 
Moreover, by definition, there exists an element $n=\{n_{ijk}\}_{ijk} \in \prod_{i,j,k} {\EE nd}^{-1}(\EE_\FF^{\cdot})(V_{ijk}) \otimes \m_A$, such that
\[ m_{jk} |_{V_{ijk}} \bullet - m_{ik}|_{V_{ijk}} \bullet m_{ij}|_{V_{ijk}} =
d_{\EE
nd^{*}(\EE_\FF^{\cdot})}n_{ijk}+[l_j |_{V_{ijk}} , n_{ijk}]= [d + l_j |_{V_{ijk}} , n_{ijk}].
\]
The last equality says that the isomorphisms $e^{m_{ij}}$ satisfy the cocycle condition up to homotopy. 
Thus, the local
$A$-flat sheaves of $\OO_X|_{V_i}\otimes A$-modules ${\mathcal
F}_{A,V_i}:={\mathcal H}^0(\EE_\FF^{\cdot}|_{V_i}\otimes A,d + l_i)$ 
glue together to give the global coherent sheaf ${\mathcal F}_A$
which is a deformation of ${\mathcal F}$. Indeed, we do not need to glue the deformed complexes together but only their cohomology.

Observe that the definition $\Theta(A)(l,m):=\FF_A$ does not depend on the class of $m$ up to irrelevant stabiliser.
Indeed, let $m'=\{ m'_{ij} \}_{ij}\in \prod_{i,j} {\EE nd}^0(\EE_\FF^{\cdot})(V_{ij})\otimes \m_A$ and $u=\{u_{ij}\}_{ij} \in  \prod_{i,j} {\EE nd}^{-1}(\EE_\FF^{\cdot})(V_{ij})\otimes \m_A$, such that
\[ m_{ij}= m'_{ij} \bullet \left(d_{ \EE nd^{*}(\EE_\FF^{\cdot})} u_{ij} + [l_j|_{V_{ij}}, u_{ij} ]\right)= m'_{ij} \bullet [d + l_j |_{V_{ij}} , u_{ij}].\]
The first equality expresses the fact that $m$ and $m'$ are in the same class up to irrelevant stabiliser, while the last assures that the local isomorphisms $e^{m_{ij}}$ and $e^{m'_{ij}}$ induce the same isomorphisms in cohomology. Since the global deformations $\Theta(A)(l,m)$ and $\Theta(A)(l,m')$ are obtained gluing the same local deformations with the same gluing functions, they are the same.

Next, we define the functor 
\[ \Theta(A): \Tot(\Del_{\EE nd^{*}(\EE_\FF^{\cdot})(\mathcal V)})(A) \to \Def_{\FF}(A)\]
on morphisms. 

A morphism in $ \Tot(\Del_{\EE nd^{*}(\EE_\FF^{\cdot})(\mathcal V)})(A)$ between the elements $(l,e^{m})$ and $(l',e^{m'})$ is a class, up to irrelevant stabiliser, of an element  $a=\{  a_i\}_i \in \prod_i {\EE nd}^0(\EE_\FF^{\cdot})(V_i) \otimes \m_A$. It defines, locally on every open set $V_i$, the isomorphisms 
\[ e^{a_i}: (\EE_\FF^\cdot|_{V_i}\otimes A, d+l_i) \to (\EE_\FF^\cdot|_{V_i}\otimes A, d+l'_i)\]
 between the deformed local complexes. As above, the compatibility with the differentials is assured by the equation $   l'_i=e^{a_{i}}*l_i$ and they reduce to the identity on the residue field. 
Moreover,
there exists  $b=\{b_{ij}\}_{i,j} \in \prod_{i,j} {\EE nd}^{-1}(\EE_\FF^{\cdot})(V_{ij})\otimes
\m_A$ that satisfies equations 
\[ -m_{ij} \bullet -a_i|_{V_{ij}} \bullet m'_{ij} \bullet a_j
|_{V_{ij}} = d_{\EE nd^{*}(\EE_\FF^{\cdot})}b_{ij}+ [l_j|_{V_{ij}}, b_{ij}].\]
This means that the local isomorphisms $e^{a_i}$ glue together in cohomology to give a global isomorphism $\Theta(A)(a)$ of the corresponding deformed sheaves $\Theta(A)(l,m)={\mathcal F}_A$ and $\Theta(A)(l',m')=\mathcal{F}_A'$.

Note that $\Theta(A)$ is well defined on the class of $a$, up to irrelevant stabiliser. Indeed, let  $a'=\{  a'_i\}_i \in \prod_i {\EE nd}^0(\EE_\FF^{\cdot})(V_i) \otimes \m_A$ and 
 $v=\{v_i\}_i \in  \prod_{i} {\EE nd}^{-1}(\EE_\FF^{\cdot})(V_{i})\otimes \m_A$, such that
\[ a_{i}= a'_{i} \bullet \left(d_{ \EE nd^{*}(\EE_\FF^{\cdot})} v_{i} + [l_i, v_{i} ]\right)= a'_i \bullet [d + l_i , v_{i}].\]
The first equality expresses the fact that $a$ and $a'$ represent the same class up to irrelevant stabiliser, while the last assures that the local isomorphisms $e^{a_i}$ and $e^{a'_i}$ induce the same isomorphisms in cohomology, i.e., between the deformed sheaves $\FF_A|_{V_i}$ and $\FF'_A|_{V_i}$, on every open set $V_i$. Thus, coinciding locally, the isomorphisms $\Theta(A)(a)$  and $\Theta(A)(a')$ coincide.

The facts that $\Theta(A)$ is an isomorphism at the level of $\pi_0$ and that it is full on morphisms follow by the above explicit description and all details are analysed in    \cite[Section 2]{FIM}.

Finally, we prove that $\Theta(A)$ is faithful on morphisms. Let  $a=\{  a_i\}_i,  a'=\{  a'_i\}_i  \in \prod_i {\EE nd}^0(\EE_\FF^{\cdot})(V_i) \otimes \m_A$ be morphisms of $\Tot(\Del_{\EE nd^{*}(\EE_\FF^{\cdot})(\mathcal V)})(A) $ between $(l,m)$ and $(l',m')$, 
such that $\Theta(A)(a)= \Theta(A)(a')$. As already discussed, $a$ and $a'$ define local isomorphisms between the local deformed complexes constructed using $l$ and $l'$. They do not need to be equal, the condition $\Theta(A)(a)= \Theta(A)(a')$ means that they 
 induce the same isomorphism in cohomology between the sheaves $\Theta(A)(l,m)=\FF_A$ and $\Theta(A)(l',m')=\FF'_A$. 
As above, this fact is equivalent to the fact that $a$ and $a'$ are in the same class up to irrelevant stabiliser. 
The last check is that $\Theta$ is a pseudo-natural transformation. 
Let $f: A \to B$ be a morphism in the category $\Art_\kk$. The pseudo-naturality lies in the fact that, given an object $(l,m) \in \Tot(\Del_{\EE nd^{*}(\EE_\FF^{\cdot})(\mathcal V)})(A) $,  
the two deformations $\HH^\cdot(\EE_\FF^{\cdot} \otimes A, d+l) \otimes_A B$ and $\HH^\cdot(\EE_\FF^\cdot \otimes A \otimes_A B, d+ f(l))$  are isomorphic via natural isomorphisms. 
\end{proof}

\begin{remark}
As a consequence, we recover the classical well known fact that the functor of
infinitesimal deformations of $\mathcal F$ has $\Ext_X^1({\mathcal
F},{\mathcal F})$ as tangent space and its obstructions are
contained in $\Ext_X^2({\mathcal F},{\mathcal F})$.
\end{remark}
\begin{remark} \label{rmk.indipendenza risoluzioni def fasci}
The above description of the functor of infinitesimal deformations of ${\mathcal F}$ is actually independent of the resolution chosen. Indeed, the dgLas of the
endomorphisms of any two locally free resolutions of ${\mathcal F}$ are
quasi-isomorphic (see,    \cite[Lemma 4.4]{Seidel.Thomas} or Lemma~\ref{lemma.indipendenza risoluzioni def sottofascio}). 
 Moreover, the computation does not depend on the choice of the affine cover \cite{FIM}.  
\end{remark}
 
\section{Deformations of a subsheaf inside a sheaf}\label{sec.subsheaf}

In this section, we analyse infinitesimal deformations of pairs consisting of a sheaf and a subsheaf of it.

Let $X$ be a smooth variety, $\FF$ and $\GG$ coherent sheaves of $\OO_X$-modules over $X$, such that $\FF$ is a subsheaf of $\GG$; denote by  $i: \FF \hookrightarrow \GG$  the natural inclusion.

\begin{definition}  
An \emph{infinitesimal deformation of the pair $\FF\subset\GG$ over $A\in \Art_{\kk}$} is given by a commutative diagram

\begin{center}
		\begin{tikzcd}
			 \FF_A  \arrow[dr]  \arrow[bend left, rr, "\pi_A^\GG|_{\FF_A}"]  \arrow[hook, r]  & \GG_A \ar[d]  \arrow[bend  left, rr, "\pi_A^\GG"]&    \FF \arrow[d]    \arrow[hook, r] &  \GG   \arrow[dl]  \\
 &\Spec A \arrow[r]  & \Spec \kk,  & 		\end{tikzcd}
	\end{center}
where $(\GG_A, \pi_A^\GG)$  and  $(\FF_A, \pi_A^\GG|_{\FF_A})$  are infinitesimal deformations over $A$  of the sheaves $\GG$  and $\FF$, respectively.

Two deformations $\mathcal{F}_A \hookrightarrow \GG_A$ and $\mathcal{F}'_A\hookrightarrow \GG'_A$ of $\mathcal{F}\hookrightarrow \GG$ over $A$ are isomorphic
if there exists an
isomorphism $\GG_A \to \GG'_A$  of the deformations of the sheaf $\GG$ that restricts to an isomorphism of deformations of sheaves $\FF_A \to \FF'_A$. 
\end{definition}

\begin{definition} 
The \emph{pseudo-functor of infinitesimal deformations of the pair $\FF\subset\GG$} is defined as
\[  \Def_{\FF\subset\GG} : \Art_\kk \to \Grpds,\] 
that associates, to any $A \in \Art_\kk$, the groupoid $ \Def_{\FF\subset\GG}(A)$, whose objects are the infinitesimal deformations 
of the pair $\FF\subset\GG$ over
 $A$ and whose morphisms are the isomorphisms of them.
  Note that  
\[ \pi_0(\Def_{\FF\subset\GG}(A)) =\]\[= \{\mbox{isomorphism classes of infinitesimal deformations of the pair $\FF\subset\GG$ over $A$}\} \]
is the classical functor of deformations in $\Set$   (see for example \cite[Section 2.A]{HuyLehn}). 

\end{definition}

Here, we would like to recall some  facts that  will be useful  later.

Let $\EE_\FF^{\cdot} \to \FF$   be a bounded locally free resolution of $\FF$   and $\EE_\GG^{\cdot} \to \GG$ be  a bounded locally free resolution of $\GG$, such that  $\EE_\FF^i$ is a subsheaf of $\EE_\GG^i$, for all $i\leq 0$, and the following diagram is commutative: 
\begin{equation} \label{equazione risoluzione sottofascio}
	\begin{tikzcd}
		0 \arrow[r] & \EE^{-n}_\FF \arrow[r] \arrow[d, hook] & \cdots \arrow[r] & \EE^{-1}_\FF \arrow[r] \arrow[d, hook] & \EE^{0}_\FF \arrow[r] \arrow[d, hook] & \FF \arrow[r] \arrow[d, "i", hook] & 0 \\
		0 \arrow[r] & \EE^{-n}_\GG \arrow[r]                 & \cdots \arrow[r] & \EE^{-1}_\GG \arrow[r]                 & \EE^{0}_\GG \arrow[r]                 & \GG \arrow[r]                      & 0,
	\end{tikzcd}
\end{equation}
where $n \leq \dim X$.
Note that, thanks to a version of the Horseshoe Lemma \cite[Section 2.2]{Weibel} for locally free resolutions,
it is always possible to construct two resolutions with such properties \cite[B.8.3]{Fulton}. 
We will shorten such a diagram as

\begin{center}
	\begin{tikzcd}
		\EE_\FF^\cdot \arrow[r, two heads] \arrow[d, hook] &  \FF \arrow[d, "i", hook] \\
		\EE_\GG^\cdot \arrow[r, two heads]                  & \GG .                     
	\end{tikzcd}
\end{center}

Next,  define the sheaf of dgLas
\begin{equation}\label{definizione L} 
\mathcal L^\cdot =\{ \varphi \in \EE nd^\cdot (\EE_\GG^\cdot) \mid \varphi(\EE_\FF^\cdot) \subset \EE_\FF^\cdot \}, 
\end{equation}
of the endomorphisms of the resolution $\EE_\GG^\cdot$ that restrict to endomorphisms of the subcomplex $\EE_\FF^\cdot$.

Actually, the quasi-isomorphism class of the sheaf of dgLas $\mathcal L^\cdot$ does not depend on the choice of the two resolutions. Indeed, mimicking \cite[Lemma 4.4]{Seidel.Thomas}, we can prove the following result.

\begin{lemma} \label{lemma.indipendenza risoluzioni def sottofascio}
Let $\EE_\FF^\cdot \hookrightarrow \EE_\GG^\cdot$  and $\EE_\FF'^\cdot \hookrightarrow \EE_\GG'^\cdot$
 be two bounded resolutions of $\FF  \hookrightarrow \GG$ that fit into a commutative diagram like \eqref{equazione risoluzione sottofascio}. 
Let $\mathcal L^\cdot$ be the sheaf of dgLas defined in \eqref{definizione L}  and 
\[ \mathcal {L'^\cdot}=\{ \varphi \in \EE nd^\cdot (\EE_\GG'^\cdot) \mid \varphi(\EE_\FF'^\cdot) \subset \EE_\FF'^\cdot \}. \]
 Then,  $\mathcal L^\cdot$ and $\mathcal L'^\cdot$ are quasi-isomorphic. 
\end{lemma}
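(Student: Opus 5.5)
Following the strategy of \cite[Lemma 4.4]{Seidel.Thomas}, I would compare both pairs of resolutions with a third one that dominates them, instead of comparing them directly. The first step is to build a bounded resolution $\EE_\FF''^\cdot \hookrightarrow \EE_\GG''^\cdot$ of $\FF\hookrightarrow\GG$, again fitting into a diagram like \eqref{equazione risoluzione sottofascio}, together with surjective morphisms of pairs of complexes
\[
(\EE_\FF''^\cdot \subset \EE_\GG''^\cdot) \to (\EE_\FF^\cdot\subset \EE_\GG^\cdot), \qquad (\EE_\FF''^\cdot \subset \EE_\GG''^\cdot) \to (\EE_\FF'^\cdot\subset \EE_\GG'^\cdot)
\]
over the identity of $\FF\subset\GG$. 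These morphisms are quasi-isomorphisms on both the sub and the ambient complexes, and degreewise split surjective on locally free sheaves. For the construction I would take the fibre product of the two resolutions over $\GG$, and over $\FF$ on the subcomplex, and then resolve it again by a horseshoe argument as in \cite[B.8.3]{Fulton}. I would add acyclic summands of the form $\mathcal{P}\xrightarrow{\Id}\mathcal{P}$ where needed to get surjectivity. Because $X$ is smooth, the resulting complex can be truncated to a bounded one with locally free kernels. Once this is done, symmetry means it is enough to prove that $\mathcal L''^\cdot$ and $\mathcal L^\cdot$ are quasi-isomorphic whenever there is a surjection $\pi$ of pairs of this kind.

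For that reduced case, let $K_\FF=\ker(\EE_\FF''\to\EE_\FF)$ and $K_\GG=\ker(\EE_\GG''\to\EE_\GG)$. Both are acyclic bounded complexes of locally free sheaves, with $K_\FF\subset K_\GG$. I would consider the sub-dgLa
\[
\mathcal M^\cdot=\{\varphi\in\mathcal L''^\cdot \mid \varphi(K_\GG)\subset K_\GG,\ \varphi(K_\FF)\subset K_\FF\}.
\]
Each such $\varphi$ descends to the quotient, so there is a dgLa morphism $\mathcal M^\cdot\to\mathcal L^\cdot$, and there is also the inclusion $\mathcal M^\cdot\hookrightarrow \mathcal L''^\cdot$. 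I would show both are quasi-isomorphisms by working locally, where all the sheaves are free and the splittings can be chosen compatibly. Locally, the kernel of $\mathcal M\to\mathcal L$ and the cokernel of $\mathcal M\hookrightarrow\mathcal L''$ each carry a finite filtration whose graded pieces are $\mathcal{H}om$-complexes with an acyclic bounded free complex in one of the two slots. Such complexes are acyclic. Surjectivity of $\mathcal M\to\mathcal L$ would come from the splittings, since an endomorphism of $\EE_\GG$ preserving $\EE_\FF$ lifts to one of $\EE_\GG''=\EE_\GG\oplus K_\GG$ that preserves $\EE_\FF''$. The outcome is a zigzag $\mathcal L^\cdot\leftarrow\mathcal M^\cdot\to\mathcal L''^\cdot$ of quasi-isomorphisms of sheaves of dgLas.

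I expect the main obstacle to be the first step. The dominating resolution has to be constructed so that, simultaneously, the subcomplex maps to the subcomplex, surjectivity holds on the sub and ambient levels, and the inclusion $\EE''_\FF\subset\EE''_\GG$ is degreewise locally split. That last property is what makes the filtrations of $\mathcal{H}om$-complexes behave. To secure it, I would build $\EE''_\GG$ as $\EE''_\FF\oplus\mathcal{Q}$ using the horseshoe construction, with $\mathcal Q$ resolving $\GG/\FF$, which keeps everything compatible.
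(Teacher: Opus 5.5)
Your route differs from the paper's. The paper maps both pairs \emph{injectively} into a common pair $\mathcal{Q}^\cdot\subset\mathcal{P}^\cdot$ and compares them through the lower-triangular endomorphisms $\mathcal{D}^\cdot$ of $\operatorname{cone}(j_1)$. You instead use a roof of surjections out of a split dominating pair, together with the kernel-preserving sub-dgLa $\mathcal{M}^\cdot$. The reduced case, however, has a genuine gap. Your lifting argument and your filtrations both rest on compatible local splittings, i.e.\ a graded section $s$ of $\EE''_\GG\to\EE_\GG$ with $s(\EE_\FF)\subset\EE''_\FF$. Since $\EE''_\FF\subset\EE''_\GG$ is split, such an $s$ exists only if the \emph{given} pair $\EE_\FF^\cdot\subset\EE_\GG^\cdot$ is degreewise locally split: $s$ would make $\EE_\GG/\EE_\FF$ a direct summand of the locally free $\EE''_\GG/\EE''_\FF$. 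You impose splitness only on the auxiliary pair, while the statement only assumes that $\EE_\FF^i$ is a subsheaf of $\EE_\GG^i$.

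This cannot be patched, because without splitness the conclusion fails. On $X=\mathbb{A}^1=\Spec\kk[x]$ let $\GG=\OO_X$ and $\FF=x\OO_X$.
\begin{itemize}
\item The pair $\EE_\FF=\OO_X\xrightarrow{\cdot x}\OO_X=\EE_\GG$, in degree $0$, gives $\LL^\cdot=\OO_X$ in degree $0$.
\item The split pair $\EE'_\FF=\OO_Xu_1\subset\EE'_\GG=(\OO_Xe\to\OO_Xu_1\oplus\OO_Xu_2)$, with $de=u_1-xu_2$ and augmentation $u_1\mapsto x$, $u_2\mapsto 1$, gives $\mathcal{H}^1(\LL'^\cdot)\cong\OO_X/x\OO_X\neq 0$.
\end{itemize}
The nonzero $\mathcal{H}^1$ is real: $\Def_{\FF\subset\GG}$ has the nontrivial first-order deformation $(x-\epsilon)\OO_X[\epsilon]\subset\OO_X[\epsilon]$, which the first pair cannot see.

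Taking $\EE''=\EE'$, this is exactly your reduced case. The map $u_1\mapsto 1\in\EE^0_\FF$, $u_2\mapsto 1\in\EE^0_\GG$, $e\mapsto 0$ is a surjection of pairs. No compatible section exists: $s(1)=au_1+bu_2$ with $xa+b=1$ and $x\,s(1)\in\OO_Xu_1$ forces $xa=1$. Finally, $\mathcal{M}^\cdot\to\LL^\cdot$ is a quasi-isomorphism, but $\LL'^\cdot/\mathcal{M}^\cdot$ is $\OO_X\xrightarrow{x}\OO_X$ in degrees $0,1$, which is not acyclic.

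The missing ingredient is the hypothesis that each inclusion $\EE_\FF^i\subset\EE_\GG^i$ has locally free cokernel, as in Remark~\ref{rmk.coomologia non negativa per L} and as Fulton's construction provides. Under that hypothesis your argument goes through:
\begin{enumerate}
\item Extend a local section of $\EE''_\FF\to\EE_\FF$ across a local complement of $\EE_\FF$ in $\EE_\GG$ to obtain $s$.
\item $K_\GG/K_\FF\cong\ker(\EE''_\GG/\EE''_\FF\to\EE_\GG/\EE_\FF)$ is locally free, so $K_\FF\subset K_\GG$ is locally split.
\item $\LL''^\cdot/\mathcal{M}^\cdot\cong\{\psi\in\mathcal{H}om(K_\GG,\EE_\GG)\mid\psi(K_\FF)\subset\EE_\FF\}$ has graded pieces $\mathcal{H}om(K_\FF,\EE_\FF)$ and $\mathcal{H}om(K_\GG/K_\FF,\EE_\GG)$.
\item $\ker(\mathcal{M}^\cdot\to\LL^\cdot)$ has graded pieces $\mathcal{H}om(\EE''_\FF,K_\FF)$ and $\mathcal{H}om(\EE''_\GG/\EE''_\FF,K_\GG)$.
\end{enumerate}
All four graded pieces are locally contractible.

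The paper's cone argument is subject to the same caveat. It infers that ${(j_1)}_*$ is a quasi-isomorphism onto $\{\varphi\mid\varphi(\EE_\FF^\cdot)\subset\mathcal{Q}^\cdot\}$ merely from $j_1$ being a homotopy equivalence, which ignores the constraint; it too needs a splitting-and-filtration step of this kind. With the hypothesis in place the two routes are comparable. Yours lifts endomorphisms along surjections onto the given pairs. The paper's avoids lifting but requires building the common target $\mathcal{Q}^\cdot\subset\mathcal{P}^\cdot$ receiving injective quasi-isomorphisms.
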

\begin{proof}
Following \cite[Theorem 4.4]{joint-defos} and \cite[Lemma 7.7]{pairs}, via the usual killing cycles procedure,  it is possible to construct bounded complexes of locally free sheaves $\mathcal{P}^\cdot,\mathcal{N}^\cdot, \mathcal{Q}^\cdot$ and $\mathcal{R}^\cdot$ which fit into a commutative diagram of the form
	\begin{center}
		\begin{tikzcd}
			&                                                                  & \FF \arrow[ddd, dashed, hook, bend right, shift right=0.5]                        &                                             &   \\
			0 \arrow[r] & \EE_\FF^\cdot \oplus \EE_\FF'^\cdot \arrow[r] \arrow[d, hook] \arrow[ru] & \mathcal{Q}^\cdot \arrow[r] \arrow[d, hook] \arrow[u, two heads] & \mathcal{R}^\cdot \arrow[r] \arrow[d, hook] & 0 \\
			0 \arrow[r] &\EE_\GG^\cdot\oplus \EE_\GG'^\cdot \arrow[r] \arrow[rd]                  & \mathcal{P}^\cdot \arrow[r] \arrow[d, two heads]                 & \mathcal{N}^\cdot \arrow[r]                 & 0 \\
			&                                                                  & \GG,                                                              &                                             &  
		\end{tikzcd}
	\end{center}
	where the rows are short exact sequences, $\mathcal{Q}^\cdot \to \FF$ and $\mathcal{P}^\cdot \to \GG$ are locally free resolutions, and the induced morphisms $i_1 \colon \EE_\FF^\cdot \to \mathcal{Q}^\cdot$, $i_2 \colon \EE_\FF'^\cdot \to \mathcal{Q}^\cdot$, $j_1\colon\EE_\GG^\cdot \to \mathcal{P}^\cdot$, $j_2\colon\EE_\GG'^\cdot \to \mathcal{P}^\cdot$ are quasi-isomorphisms.  This construction is detailed in the appendix (see Lemma~\ref{lemma.somma-diretta-risoluz}).
	Consider the commutative  diagram 
	\begin{center}
		\begin{tikzcd}
			\EE_\FF^\cdot \arrow[r, "i_1", "\sim"',hook] \arrow[d, hook] & \mathcal{Q}^\cdot \arrow[d, hook] \\
			\EE_\GG^\cdot \arrow[r, "j_1","\sim"', hook]                & \mathcal{P}^\cdot,               
		\end{tikzcd}
	\end{center}
 	where $j_1$ is a quasi-isomorphism between two resolutions of $\GG$ such that $j_1(\EE_\FF^\cdot) \subset \mathcal{Q}^\cdot$.
	Consider the cone $\CC^\cdot=\operatorname{cone} (j_1)$ and 
	the sheaf of dgLas  of the endomorphisms of the cone $\CC^\cdot$:
	\[ \EE nd^\cdot (\CC^\cdot) = \left\{ \phi=\left( \begin{array}{cc}  \phi_{11} & \phi_{12} \\ \phi_{21} & \phi_{22}\end{array}  \right)  \bigg\vert \begin{array}{cc}
		 \phi_{11} \in \EE nd^\cdot(\EE_\GG^\cdot), &\phi_{12} \in \HH om ^{\cdot+1}(\mathcal{P}^\cdot, \EE_\GG^\cdot),\\ \phi_{21} \in \HH om^{\cdot-1}(\EE_\GG^\cdot, \mathcal{P}^\cdot), &\phi_{22} \in \EE nd^\cdot(\mathcal{P}^\cdot)
	\end{array}\right\}\!,\] 
	whose differential $d: \EE nd^k (\CC^\cdot)  \to \EE nd^{k+1} (\CC^\cdot)  $ is given by: 
	\[ d\phi=\!\!\left( \begin{array}{cc}  - d_{\EE^\cdot_\GG} \phi_{11} +(-1)^k \phi_{11} d_{\EE^\cdot_\GG}  -(-1)^k \phi_{12} j_1 & -d_{\EE^\cdot_\GG}  \phi_{12}-(-1)^k \phi_{12} d_{\mathcal{P}^\cdot}  \\
		j_1 \phi_{11} -(-1)^k \phi_{22} j_1 + d_{\mathcal{P}^\cdot} \phi_{21} + (-1)^k \phi_{21}d_{\EE^\cdot_\GG}  & j_1 \phi_{12} + d_{\mathcal{P}^\cdot}  \phi_{22} -(-1)^k \phi_{22} d_{\mathcal{P}^\cdot}  \end{array}\right)\!. \]
Next, we define the following subsheaf of $\EE nd^\cdot (\CC^\cdot) $:  
\[ \mathcal D^\cdot\!=\! \left\{\phi= \left( \begin{array}{cc}  \phi_{11} & 0 \\ \phi_{21} & \phi_{22}\end{array}  \right) \in \EE nd^\cdot (\CC^\cdot) \ \bigg\vert \ \phi_{11} (\mathcal{E}_\FF^\cdot) \subset \mathcal{E}_\FF^\cdot, \  \phi_{21} (\mathcal{E}_\FF^\cdot) \subset \mathcal{Q}^\cdot,  \  \phi_{22} (\mathcal{Q}^\cdot) \subset \mathcal{Q}^\cdot  \right\} \!; \]
observe that it is closed with respect to the above differential, because $j_1(\EE_\FF^\cdot) \subset  \mathcal{Q}^\cdot$ and then it is still a sheaf of dgLas. 

The projection $\pi_2:  \mathcal{D}^\cdot \to \mathcal{M}^\cdot := \{ \varphi \in \EE nd^\cdot (\mathcal{P}^\cdot) \mid \varphi(\mathcal{Q}^\cdot) \subset \mathcal{Q}^\cdot \}$, given by $\phi \mapsto \phi_{22}$, is  surjective, and, up to a shift, its kernel is isomorphic to the cone of ${(j_1)}_* \colon \LL^\cdot \to \{ \varphi \in \HH om^\cdot (\EE_\GG^\cdot,\mathcal{P}^\cdot) \mid \varphi(\EE_\FF^\cdot) \subset \mathcal{Q}^\cdot\}$. Since $j_1$ is a quasi-isomorphism between  bounded  locally free resolutions,  it is a quasi-isomorphism between bounded complexes of projective modules on every affine open set; then it is a homotopy equivalence \cite[III.5.24]{GM}. Therefore, on every affine open subset of $X$, ${(j_1)}_*$ is a quasi-isomorphism, hence its cone is acyclic.    
Since the map $\pi_2$ is surjective,  the kernel of $\pi_2$ is quasi-isomorphic to the cone of $\pi_2$ up to a shift; then it is acyclic  and so  $\pi_2$ is a quasi-isomorphism.   
The same argument can be used for the map $\pi_1:  \mathcal{D}^\cdot \to \LL^\cdot$, so that we get a zigzag of quasi-isomorphisms of sheaves of dgLas
\begin{center}
	\begin{tikzcd}
		& \mathcal{D}^\cdot \arrow[ld, "\pi_1"', two heads] \arrow[rd, "\pi_2", two heads] &             \\
		\LL^\cdot &                                                                            & \mathcal{M}^\cdot.
	\end{tikzcd}
\end{center}
Applying the same construction to the diagram of resolutions
\begin{center}
	\begin{tikzcd}
		\EE_\FF'^\cdot \arrow[r, "i_2", "\sim"',hook] \arrow[d, hook] & \mathcal{Q}^\cdot \arrow[d, hook] \\
		\EE_\GG'^\cdot \arrow[r, "j_2","\sim"', hook]                & \mathcal{P}^\cdot,               
	\end{tikzcd}
\end{center}
we obtain another zigzag of quasi-isomorphisms, so that we have
\begin{center}
	\begin{tikzcd}
		& \mathcal{D}^\cdot \arrow[ld, "\pi_1"', two heads] \arrow[rd, "\pi_2", two heads] &             & \mathcal{D}'^\cdot \arrow[ld, two heads] \arrow[rd, two heads] &      \\
		\LL^\cdot &                                                                            & \mathcal{M}^\cdot &                                                          & \LL'^\cdot,
	\end{tikzcd}
\end{center}
and then $\LL^\cdot$ and $\LL'^\cdot$ are quasi-isomorphic.
\end{proof}

\begin{remark} \label{rmk.coomologia non negativa per L}
 Let $\iota : \EE_\FF^\cdot \hookrightarrow \EE_\GG^\cdot$ be a resolution of $ \FF\hookrightarrow  \GG$ such that the $\coker (\iota)$ is a locally free resolution of $\GG/\FF$; it is always possible to construct such resolutions (see \cite[B.8.3]{Fulton}): 
 \[ 0 \to \EE_\FF^\cdot \hookrightarrow \EE_\GG^\cdot \to \coker(\iota) \to 0.\]
Let $\LL^\cdot$ be as defined above, then we have the short exact sequence: 
 \[ 0 \to \mathcal{L}^\cdot \to \EE nd^\cdot (\EE_\GG^\cdot)  \to  \HH om^\cdot (\EE_\FF^\cdot,\coker(\iota)) \to 0 \]
and the induced long exact sequence in hypercohomology
\[  \cdots\to \mathbb{H}^i(X ,\mathcal{L}^\cdot) \to   \mathbb{H}^i(X , \EE nd^\cdot(\EE^\cdot_\GG) )\to \mathbb{H}^i(X ,\HH om^\cdot (\EE_\FF^\cdot,\coker(\iota))) \to \cdots .\]
Note that $\mathbb{H}^i(X , \EE nd^\cdot (\EE_\GG^\cdot) ) \cong \Ext_X^i(\GG,\GG)$ and   $\mathbb{H}^i(X,\HH om^\cdot (\EE_\FF^\cdot,\coker(\iota)) )\cong \Ext_X^i(\FF, \GG/\FF)$, so they all vanish for all $i<0$ and the same holds for $\mathcal{L}^\cdot$.
\end{remark}

Finally, we find the  dgLa that controls the deformations $\Def_{\FF\subset\GG}$.

\begin{theorem}\label{thm.dgLa def sottofasci}

In the above notations,  there are equivalences of functors
\[  \Del_{\Tot (\mathcal L^\cdot(\mathcal V))} \cong   \Tot(\Del _{  (\mathcal L^\cdot(\mathcal V))})\cong \Def_{\FF\subset \GG}.\]
\end{theorem}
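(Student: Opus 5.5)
The plan is to copy the proof of Theorem~\ref{thm.dgLa def fasci coerenti} step by step, replacing $\EE nd^\cdot(\EE_\FF^\cdot)$ with the sheaf of dgLas $\LL^\cdot$ from \eqref{definizione L}.

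\textbf{First equivalence.} We apply Theorem~\ref{generalized hinich theorem} to the semicosimplicial dgLa $\LL^\cdot(\VV)$. This needs $H^j(\LL^\cdot(V_{i_0\cdots i_k}))=0$ for $j<0$ on every finite intersection of opens of the cover. Each intersection is affine because $X$ is separated. By Lemma~\ref{lemma.indipendenza risoluzioni def sottofascio}, we may assume (locally, or globally, by \cite[B.8.3]{Fulton}) that the resolutions are chosen as in Remark~\ref{rmk.coomologia non negativa per L}. On an affine open $U$ the sequence
\[0\to\LL^\cdot(U)\to\EE nd^\cdot(\EE_\GG^\cdot)(U)\to\HH om^\cdot(\EE_\FF^\cdot,\coker\iota)(U)\to 0\]
is exact. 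The cohomology of the outer terms is $\Ext^j_U(\GG,\GG)$ and $\Ext^j_U(\FF,\GG/\FF)$, and both vanish for $j<0$. Hence $H^j(\LL^\cdot(U))=0$ for $j<0$, and Theorem~\ref{generalized hinich theorem} gives the first equivalence.

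\textbf{Second equivalence: the functor $\Theta$.} We define $\Theta\colon\Tot(\Del_{\LL^\cdot(\VV)})\to\Def_{\FF\subset\GG}$ as before. An object $(l,e^m)$ consists of the following data.
\begin{itemize}
\item A Maurer--Cartan element $l_i\in\LL^1(V_i)\otimes\m_A$. It deforms the complex $(\EE_\GG^\cdot|_{V_i}\otimes A,d+l_i)$. Since $l_i$ preserves $\EE_\FF^\cdot$, the subcomplex $(\EE_\FF^\cdot|_{V_i}\otimes A,d+l_i)$ is also a deformed complex.
\item By semicontinuity both complexes are exact away from degree $0$. Their $\HH^0$'s are flat deformations $\FF_{A,V_i}$ and $\GG_{A,V_i}$ of $\FF|_{V_i}$ and $\GG|_{V_i}$.
\item Gluing data $e^{m_{ij}}$, with $m_{ij}\in\LL^0$. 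These preserve the subcomplexes, and they satisfy the cocycle condition up to homotopies $[d+l_j,n_{ijk}]$ with $n_{ijk}\in\LL^{-1}$. Such homotopies also preserve the subcomplexes, so the induced maps on $\HH^0$ satisfy the cocycle condition strictly, simultaneously for $\FF$ and $\GG$.
\end{itemize}
The local sheaves therefore glue to $\FF_A$ and $\GG_A$, and the gluing is compatible with the maps $\FF_{A,V_i}\to\GG_{A,V_i}$. Morphisms $a\in\prod_i\LL^0(V_i)\otimes\m_A$, with $b\in\LL^{-1}$, induce isomorphisms of $\GG_A$ restricting to isomorphisms of $\FF_A$. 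Independence of the choice of representative modulo irrelevant stabilisers follows exactly as in Theorem~\ref{thm.dgLa def fasci coerenti}, since elements $[d+l,u]$ induce zero on cohomology.

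\textbf{Main obstacle: $\Theta$ is an equivalence.} The delicate point is checking that $\FF_{A,V_i}\to\GG_{A,V_i}$ is injective. Its cokernel is $\HH^0$ of the deformed quotient complex $\coker\iota\otimes A$, and that quotient complex is again a deformation of a resolution of $\GG/\FF$. Flatness of that $\HH^0$ then gives injectivity, by the local criterion of flatness applied to the short exact sequence of complexes.

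With this settled, three properties remain.
\begin{itemize}
\item \emph{Essential surjectivity.} A deformation $\FF_A\subset\GG_A$ restricted to an affine $V_i$ lifts to a deformation of the pair of free resolutions: one lifts step by step along small extensions, using the vanishing of $\Ext^{>0}$ of affine locally free objects, and deforms the differentials on $\EE_\GG$ preserving $\EE_\FF$. Equivalently, local deformations of the pair are trivial over affines up to the data in $\LL^0$. Local isomorphisms of pairs between two such lifts lift to chain isomorphisms in $\exp(\LL^0)$. This uses the surjectivity of $H^0(\LL^\cdot(V))\to\Hom(\FF\subset\GG,\FF\subset\GG)$ on affines, which follows from the long exact sequence above.
\item \emph{Fullness.} This follows similarly.
\item \emph{Faithfulness.} Two local chain maps inducing the same map on $\HH^0$ differ, by the injectivity of $H^0$ on affines together with the vanishing of $H^{-1}$, by an element of the form $[d+l,u]$ with $u\in\LL^{-1}$. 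Hence they define the same morphism up to the irrelevant stabiliser.
\end{itemize}

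Pseudo-naturality in $A$ follows as in Theorem~\ref{thm.dgLa def fasci coerenti}.
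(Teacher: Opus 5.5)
Your proposal is correct and follows the paper's own argument. The first equivalence comes from Theorem~\ref{generalized hinich theorem}, with the vanishing of negative cohomology obtained from the short exact sequence of Remark~\ref{rmk.coomologia non negativa per L}; you run that sequence on each affine intersection, which is what the hypothesis actually requires. The second equivalence comes from repeating the construction of $\Theta$ in Theorem~\ref{thm.dgLa def fasci coerenti} with $\LL^\cdot$ in place of $\EE nd^\cdot(\EE_\FF^\cdot)$, where compatibility with the inclusion is automatic because all gauge data lie in $\LL^\cdot$.

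Your additional checks go beyond the paper, which simply defers these points to the single-sheaf case and \cite{FIM}. These are the injectivity of $\FF_{A,V_i}\to\GG_{A,V_i}$ via the deformed quotient complex, and the sketches of essential surjectivity, fullness and faithfulness. One phrasing needs fixing: over an affine it is the underlying graded free modules that deform trivially, not the pair itself. The non-trivial local datum is the Maurer--Cartan element in $\LL^1$, not data in $\LL^0$.
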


\begin{proof}  
The first equivalence is a consequence of Theorem~\ref{generalized hinich theorem}: note that, by Remark~\ref{rmk.coomologia non negativa per L}, the hypotheses are satisfied.

To prove the second equivalence it is enough to follow  step by step the approach described in the proof of Theorem~\ref{thm.dgLa def fasci coerenti} (see also \cite[Section 2]{FIM}).
 Here we have just to be careful that the deformations of the pair $\FF\subset \GG$ have to be compatible with the inclusion. This fact is assured since the endomorphisms of $\EE^\cdot_\GG$ we are considering are contained in $\LL^\cdot$.  
\end{proof}

\begin{remark}\label{rem.forgetful-functor}
As before, the above  computation does not depend on the choice of the affine cover \cite{FIM}. 
Note that we can define a forgetful  functor    $\Def_{\FF\subset\GG} \to \Def_\GG$, forgetting all information  related to   $\FF$. According to  Theorem~\ref{thm.dgLa def sottofasci},  $\Def_{\FF\subset\GG} $ is controlled by the dgLa associated to $
 \mathcal L^\cdot$ while, by Theorem~\ref {thm.dgLa def fasci coerenti},  $\Def_\GG$ is controlled  by the dgLa associated to $ \EE nd^\cdot (\EE_\GG^\cdot) $.
 Then, the forgetful  functor corresponds to the morphism $ \mathcal L^\cdot \to  \EE nd^\cdot (\EE_\GG^\cdot)$.
For example, thanks to Remark~\ref{rmk.coomologia non negativa per L}, if $\mathbb{H}^1(X,\HH om^\cdot (\EE_\FF^\cdot,\coker(\iota))) \cong \Ext_X^1(\FF, \GG/\FF)=0$, then the forgetful functor  $\Def_{\FF\subset\GG} \to \Def_\GG$ is formally smooth according to \cite[Definition 6.22]{TV}. 
\end{remark}

\section{Deformations of a morphism of coherent sheaves} \label{sezione deformazione mappe}

In this section, we analyse the infinitesimal deformations of a morphism of coherent sheaves and explicitly describe a differential graded Lie algebra that controls this deformation problem.

\subsection{Geometric deformations}
Let $X$ be a smooth variety over a field $\mathbb{K}$ of characteristic zero,   $\FF$ and $\GG$  coherent sheaves of $\OO_X$-modules over $X$ and  $\alpha \colon \FF\to \GG$  a morphism of them. 
First of all, we recall some definitions. 
\begin{definition} \label{def.deform di morf di fasci}
	\emph{An infinitesimal deformation of the  morphism  $\alpha \colon \FF\to \GG$ over  $A\in \Art_\kk$} is a morphism $\alpha_A: \FF_A\to \GG_A$ of coherent sheaves of $\OO_X \otimes A$-modules over $X\times \Spec A$, where $\FF_A$ and $\GG_A$ are deformations of $\FF$ and $\GG$ over $A$ respectively, such that the following diagram is commutative: 
	\[
	\xymatrix{ \FF_A \ar[r]^{\alpha_A} \ar[d]^{\pi_A^\FF}  & \GG_A \ar[r] \ar[d]^{\pi_A^\GG} & \Spec A  \ar[d] \\
		\FF \ar[r]^{\alpha} & \GG \ar[r] & \Spec \kk .  }
	\]

	Two deformations $\alpha_A: \FF_A\to \GG_A$ and $\alpha'_A: \FF'_A\to \GG'_A$  of $\alpha: \FF \to \GG$ over $A$ are \emph{isomorphic} 
	if there exists a pair $(\phi,\psi)$ of isomorphisms of sheaves $\phi: \FF_A \to \FF'_A$ and $\psi : \GG_A \to \GG'_A$, such that the following diagram commutes:
	\[
	\xymatrix{ \FF_A \ar[rrr]^{{\alpha}_A} \ar[dd]^\phi \ar[dr]^{\pi_A^{\FF}}  & & & \GG_A \ar[dl]_{\pi_A^{\GG}}   \ar[dd]^\psi \ar[dr] & \\
		& \FF \ar[r]^\alpha & \GG  & & \Spec A \\
		\FF'_A \ar[rrr]_{\alpha'_A } \ar[ur]_{\pi_A^{\FF'}}   & & &     \GG'_A \ar[ur] \ar[ul]^{\pi_A^{\GG'}} . &  }\]
	
\end{definition}

We recall that the trivial deformation of a sheaf $\FF$ over $A$ is given by  $\FF\otimes_\kk A $ and that the trivial deformation of $\alpha \colon \FF\to \GG$   over  $A\in \Art_\kk$ is given by the trivial extension 
$ \alpha\otimes \Id_A \colon \FF\otimes_\kk A  \to \GG\otimes_\kk A $.

\begin{definition} \label{def.funtore def in gruppoidi di alpha}
The \emph{pseudo-functor of infinitesimal deformations of  the morphism $\alpha:\FF \to \GG$} is defined as
\[  \Def_{(\FF,\alpha, \GG)}  : \Art_\kk \to \Grpds,\] 
	that associates, to any $A \in \Art_\kk$, the groupoid $\Def_{(\FF,\alpha, \GG)}(A)$, whose objects are the infinitesimal deformations of the morphism $\alpha$  over
	$A$ and whose morphisms are the isomorphisms of them.
	Note that  
	\[ \pi_0(\Def_{(\FF,\alpha, \GG)}(A)) =\]
	\[= \{\mbox{isomorphism classes of infinitesimal deformations of the morphism $\alpha$ over $A$}\} \]
	is the classical functor of deformations in $\Set$.
	
\end{definition} 

\begin{remark}\label{remark defo di morfismi come grafo}
	Among all the  infinitesimal deformations of $\alpha \colon \FF\to \GG$ over $A \in\Art_\kk$, there are the infinitesimal deformations of the morphism  $\alpha$ in which $\FF$ and $\GG$ deform trivially, i.e., the deformations $ \alpha_A \colon \FF\otimes_\kk A  \to \GG\otimes_\kk A $ such that just the map deforms.  The  groupoid of these deformations defines a sub-pseudo-functor 
	$\Def_{\alpha} : \Art_\kk \to \Grpds$. 
of the pseudo-functor $\Def_{(\FF,\alpha, \GG)} $. With \emph{sub-pseudo-functor} we intend that for every    $A \in \Art_\kk$ the functor $\Def_{\alpha}(A) \to \Def_{(\FF,\alpha, \GG)}(A) $ is injective on $\pi_0$ and fully faithful.  
\end{remark}

Let  $\alpha \colon \FF\to \GG$  be a morphism of coherent sheaves and $\gamma$ its graph in
$ \FF\oplus \GG$. We define $\gamma$ as the image of the morphism of sheaves $(\Id, \alpha): \FF \to \FF\oplus \GG$. 

For future use, we would like to observe  that a deformation of $\alpha: \FF \to \GG$ over $A \in \Art_\kk$,    as  in Definition~\ref{def.deform di morf di fasci}, can be also  viewed  as the collection of the following data: 

\begin{itemize}
	\item two deformations $\FF_A$ and $\GG_A$ of $\FF$ and $\GG$ over $A$, respectively; 
	\item a deformation $\gamma_A \subset (\FF\oplus\GG)_A$ of the pair   $\gamma \subset \FF\oplus\GG$  over $A$; 
	\item an isomorphism $f_A$ between the deformations  $\FF_A \oplus \GG_A$ and $(\FF\oplus\GG)_A$. 
\end{itemize}
Two such collections of data $\left(\FF_A, \GG_A, \gamma_A \subset (\FF\oplus\GG)_A, f_A \right)$ and $\left(\FF'_A, \GG'_A, \gamma'_A \subset (\FF\oplus\GG)'_A, f'_A \right)$ define isomorphic deformations of $\alpha:\FF \to \GG$, if there exist:
\begin{itemize}
	\item two isomorphisms $\phi: \FF_A\to \FF'_A$ and $\psi:\GG_A\to \GG'_A$ of the deformations of $\FF$ and $\GG$ over $A$, respectively, 
	\item  
	an isomorphism $\chi:(\FF\oplus\GG)_A\to (\FF\oplus\GG)'_A$ of the deformations of $\FF\oplus \GG$ over $A$, with $\chi(\gamma_A)  \subset \gamma'_A$,
\end{itemize}
such that the following diagram is commutative
\[ \xymatrix{ \FF_A \oplus \GG_A \ar[r]^{f_A} \ar[d]^{(\phi,\psi)}  & (\FF\oplus\GG)_A \ar[d]^\chi \\
	\FF'_A \oplus \GG'_A \ar[r]^{f'_A}  & (\FF\oplus\GG)'_A .
}  \]

Remembering the definition of the total groupoid given in Definition~\ref{def. total grpds}  the above description can be summarised in the following result.

\begin{proposition} \label{prop.equiv grpds}
	The pseudo-functor $\Def_{(\FF, \alpha, \GG)}$ of infinitesimal deformations of a morphism $\alpha: \FF \to \GG$ of coherent sheaves is equivalent to the pseudo-functor that associates to every $A\in \Art_\kk$ the total groupoid of the following semicosimplicial groupoid: 
	\[ \xymatrix{  \left(\Def_{\FF}(A) \oplus \Def_{\GG } (A)\right) \oplus  \Def_{\gamma \subset \FF \oplus \GG}(A) 
		\ar@<2pt>[r]\ar@<-2pt>[r] &  \Def_{\FF\oplus \GG}(A) 
		\ar@<4pt>[r] \ar[r] \ar@<-4pt>[r] & 0}.\]
The first   arrow 
	\[\Def_{\FF}(A) \oplus \Def_{\GG } (A)  \to  \Def_{\FF\oplus \GG}(A),\]
	 associates to every pair $(\FF_A, \GG_A)$ of infinitesimal deformations of $\FF$ and $\GG$ over $A$ the infinitesimal deformation of $\FF \oplus \GG$ over $A$ given by the direct sum $\FF_A \oplus \GG_A$, and similarly on morphisms.
The second arrow is the forgetful functor $ \Def_{\gamma \subset \FF \oplus \GG}(A)  \to  \Def_{\FF\oplus \GG}(A)$ of Remark~\ref{rem.forgetful-functor}.
\end{proposition}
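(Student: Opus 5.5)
I would construct an explicit pseudo-natural transformation $\Psi$ from the total groupoid $\Tot(G^\Delta(A))$ of the displayed semicosimplicial groupoid to $\Def_{(\FF,\alpha,\GG)}(A)$, and check that each $\Psi(A)$ is essentially surjective, full and faithful.

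First I unwind Definition~\ref{def. total grpds} in this situation. Since the groupoid in degree $2$ is the trivial groupoid $0$, the cocycle condition in $G_2$ is automatic. An object is therefore a triple $(\FF_A,\GG_A,\gamma_A\subset(\FF\oplus\GG)_A)$ together with an isomorphism $f_A\colon \FF_A\oplus\GG_A\to(\FF\oplus\GG)_A$ of deformations of $\FF\oplus\GG$. A morphism is a triple $(\phi,\psi,\chi)$ of isomorphisms, with $\chi(\gamma_A)=\gamma'_A$, making the square with $f_A,f'_A$ commute. This is exactly the collection of data listed before the statement.

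On objects I set $\Gamma_A:=f_A^{-1}(\gamma_A)\subset\FF_A\oplus\GG_A$. Flatness of $\gamma_A$ and $\Gamma_A\otimes_A\kk=\gamma$ give that the composite $\Gamma_A\hookrightarrow\FF_A\oplus\GG_A\to\FF_A$ reduces modulo $\m_A$ to the isomorphism $\gamma\cong\FF$ given by the first projection. This is a map of $A$-flat coherent sheaves, so Nakayama (surjectivity) together with flatness of the target (injectivity, via the local criterion) shows it is an isomorphism. Hence $\Gamma_A$ is the graph of a unique $\alpha_A\colon\FF_A\to\GG_A$, and $\alpha_A$ lifts $\alpha$. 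On morphisms I send $(\phi,\psi,\chi)$ to $(\phi,\psi)$. Commutativity of the square gives $(\phi,\psi)(\Gamma_A)=\Gamma'_A$, and graph preservation is equivalent to $\psi\alpha_A=\alpha'_A\phi$.

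Essential surjectivity follows by taking $\gamma_A$ to be the graph of $\alpha_A$ inside $\FF_A\oplus\GG_A$ and $f_A=\Id$. The graph is $A$-flat, being isomorphic to $\FF_A$, and it reduces to $\gamma$. Fullness: given $(\phi,\psi)$ commuting with the $\alpha$'s, put $\chi=f'_A(\phi,\psi)f_A^{-1}$. Faithfulness holds because $\chi$ is determined by $(\phi,\psi)$ through the commuting square. Pseudo-naturality in $A$ uses the canonical isomorphisms $(\FF_A\oplus\GG_A)\otimes_AB\cong\FF_A\otimes_AB\oplus\GG_A\otimes_AB$ and the compatibility of base change with graphs, which holds again by flatness.

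The main obstacle is the step showing that a deformed subsheaf $\gamma_A$ is still a graph, which relies on flatness and Nakayama. Everything else is formal bookkeeping with the total groupoid.
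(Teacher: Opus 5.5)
Your proposal is correct and follows essentially the same route as the paper, which simply observes that a deformation of $\alpha$ amounts to the data $(\FF_A,\GG_A,\gamma_A\subset(\FF\oplus\GG)_A,f_A)$ and then reads off the total groupoid from its definition, the degree-$2$ condition being vacuous. Your write-up is in fact more complete: the flatness and Nakayama argument showing that $f_A^{-1}(\gamma_A)$ is the graph of a lift $\alpha_A\colon\FF_A\to\GG_A$ of $\alpha$ is exactly the point the paper leaves implicit.
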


 This approach is similar to the one used in \cite{HorikawaI, HorikawaIII, Sernesi} to investigate deformations of holomorphic maps of complex manifolds and,  in  \cite{Dona.Tesi, Dona.def maps} and in \cite{Manetti.LMDT}, it has been applied to analyse these deformations via dgLas. 
More recently, this technique was used in \cite{DE maps} to study deformations of morphisms of locally free sheaves. Here, we are generalising this approach to investigate  morphisms of coherent sheaves. 

\subsection{The dgLa approach}
In this section, we describe a dgLa that controls the infinitesimal deformations of a morphism of coherent sheaves. 
In the notation above, 
let ${\mathcal V} = \{  V_i \}_{i}$ be an affine open cover of $X$ and 
let   $\EE_\FF^{\cdot} \to \FF$  and $\EE_\GG^{\cdot} \to \GG$   be bounded locally free resolutions such that there exists a lifting  $\alpha^\cdot: \EE_\FF^\cdot \to \EE_\GG^\cdot$ of the morphism $\alpha: \FF \to \GG$. It is always possible to construct such resolutions, see Lemma~\ref{lem.sollev-morf}. Let $\gamma^\cdot \subset \EE_\FF^\cdot \oplus \EE_\GG^\cdot$ be the graph of the lifting $\alpha^\cdot: \EE_\FF^\cdot \to\EE_\GG^\cdot$, which is a bounded locally free resolution of $\gamma$ by Corollary~\ref{cor.grafico}.
Consider $\EE_\FF^{\cdot} \oplus \EE_\GG^{\cdot} \to \FF \oplus \GG$ as a bounded  locally free resolution of $\FF \oplus \GG$. 
By Theorem~\ref{thm.dgLa def fasci coerenti}, there are the following equivalences:
\[
\Def_{\FF}  \cong \Del_{\Tot(\EE nd^\cdot(\EE_\FF^\cdot)({\mathcal V} ))}, \quad \Def_{\GG}  \cong \Del_{\Tot(\EE nd^\cdot(\EE_\GG^\cdot) ({\mathcal V} ))}, \]
\[ \Def_{\FF \oplus \GG } \cong \Del_{\Tot(\EE nd^\cdot(\EE_\FF^\cdot\oplus \EE_\GG^\cdot)({\mathcal V} ))}. \]
Note that by Remark~\ref{rmk.indipendenza risoluzioni def fasci}, 
the above descriptions of the pseudo-functors of infinitesimal deformations are actually independent of the  choices of the resolutions and affine cover.

Let 
\begin{equation}
	\mathcal L^\cdot =\{ \varphi \in \EE nd^\cdot (\EE_\FF^\cdot \oplus \EE_\GG^\cdot) \mid \varphi(\gamma^\cdot) \subset \gamma^\cdot \}
\end{equation}
and consider the following semicosimplicial dgLa 
\[ \mathcal L^\cdot(\mathcal V):  \xymatrix{ {\prod_i \mathcal L^\cdot(V_i)}
	\ar@<2pt>[r]\ar@<-2pt>[r] & {
		\prod_{i,j}\mathcal L^\cdot(V_{ij})}
	\ar@<4pt>[r] \ar[r] \ar@<-4pt>[r] &
	{\prod_{i,j,k}\mathcal L^\cdot(V_{ijk})}
	\ar@<6pt>[r] \ar@<2pt>[r] \ar@<-2pt>[r] \ar@<-6pt>[r]& \cdots}.
\]
By Theorem~\ref{thm.dgLa def sottofasci}, the following pseudo-functors are equivalent
\[ \Def_{\gamma\subset \FF \oplus \GG}  \cong \Del_{\Tot(\mathcal L^\cdot(\mathcal V))}.\]

Note that by Lemma~\ref{lemma.indipendenza risoluzioni def sottofascio}, 
	the above equivalence is actually independent of the choices of the resolutions of $\FF$ and $\GG$ and of the lifting of $\alpha$ and, as before, of the choice of the cover \cite{FIM}.

Consider the pseudo-functor $\Def_{(\FF, \alpha, \GG)}$, then for any $A \in \Art_\kk$, the equivalence of groupoids of Proposition~\ref{prop.equiv grpds} becomes
\[ \Def_{(\FF, \alpha, \GG)}(A)\cong \Tot \! \left( \!  \! \xymatrix{  \left(\Def_{\FF}(A) \oplus \Def_{\GG } (A)\right) \oplus  \Def_{\gamma \subset \FF \oplus \GG}(A) 
	\ar@<2pt>[r]\ar@<-2pt>[r] &  \Def_{\FF\oplus \GG}(A) 
	\ar@<4pt>[r] \ar[r] \ar@<-4pt>[r] & 0}  \!  \! \right)\cong \]
 {\small
\[  \!  \Tot \! \left( \!\! \!\!\xymatrix{  \left(  \Del_{\Tot(\EE nd^\cdot(\EE_\FF^\cdot)({\mathcal V} ))} (A)   \oplus \Del_{\Tot(\EE nd^\cdot(\EE_\GG^\cdot)({\mathcal V} ))} (A) \!  \right) \! \oplus \!   \Del_{\Tot(\LL^\cdot({\mathcal V} ))}(A) \ar@<2pt>[r]\ar@<-2pt>[r] &  \Del_{\Tot(\EE nd^\cdot(\EE_\FF^\cdot\oplus \EE_\GG^\cdot)({\mathcal V} ))} (A)} \!\!\!\!\right),  \]}
 
 Finally, applying the theorem on descent of Deligne groupoids (Theorem~\ref{generalized hinich theorem}), we obtain the following  equivalence
\[  \Def_{(\FF, \alpha, \GG)}\cong \]
{\small \[ \cong  \Del_{\Tot \left( \xymatrix{  \left( {\Tot(\EE nd^\cdot(\EE_\FF^\cdot)({\mathcal V} ))}  \oplus {\Tot(\EE nd^\cdot(\EE_\GG^\cdot)({\mathcal V} ))}  \right) \oplus   {\Tot(\LL^\cdot({\mathcal V} ))}\ar@<2pt>[r]\ar@<-2pt>[r] &  {\Tot(\EE nd^\cdot(\EE_\FF^\cdot\oplus \EE_\GG^\cdot)({\mathcal V} ))} }\right),  }  \]}
that can be summarised in our main result of the section.

\begin{theorem} \label{thm dgla of alpha}
	The pseudo-functor $\Def_{(\FF, \alpha, \GG)}$ of infinitesimal deformations of the morphism $\alpha:\FF \to \GG$ of coherent sheaves is equivalent to the Deligne functor associated to the Thom--Whitney dgLa $\Tot(H(\VV))$ of the following semicosimplicial dgLa $H(\VV)$:
	\[  \xymatrix{  \left( {\Tot(\EE nd^\cdot(\EE_\FF^\cdot)({\mathcal V} ))}  \oplus {\Tot(\EE nd^\cdot(\EE_\GG^\cdot)({\mathcal V} ))}  \right) \oplus   {\Tot(\LL^\cdot({\mathcal V} ))}\ar@<2pt>[r]\ar@<-2pt>[r] &  {\Tot(\EE nd^\cdot(\EE_\FF^\cdot\oplus \EE_\GG^\cdot)({\mathcal V} ))} }.\]
	In particular, $H^1(\Tot(H(\VV)))$ is the tangent space of $\Def_{(\FF, \alpha, \GG)}$ and  $H^2(\Tot (H(\VV)))$ is an obstruction space.
\end{theorem}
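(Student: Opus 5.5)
The proof will assemble the chain of equivalences already sketched before the statement, justifying each link and then verifying the hypothesis needed for descent. Fix $A\in\Art_\kk$.

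First, Proposition~\ref{prop.equiv grpds} identifies $\Def_{(\FF,\alpha,\GG)}(A)$, pseudo-naturally in $A$, with the total groupoid of the semicosimplicial groupoid
$(\Def_\FF(A)\oplus\Def_\GG(A))\oplus\Def_{\gamma\subset\FF\oplus\GG}(A)\rightrightarrows\Def_{\FF\oplus\GG}(A)$.

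Next, I replace each vertex by a Deligne groupoid, using Theorem~\ref{thm.dgLa def fasci coerenti} for $\FF$, $\GG$ and $\FF\oplus\GG$, and Theorem~\ref{thm.dgLa def sottofasci} for the pair $\gamma\subset\FF\oplus\GG$. The latter is applied with the resolutions $\gamma^\cdot\subset\EE^\cdot_\FF\oplus\EE^\cdot_\GG$, which is legitimate by Corollary~\ref{cor.grafico}. For this replacement to be compatible with the face maps, I must check two $2$-commutativities:
\begin{itemize}
\item the direct sum functor $\Def_\FF\oplus\Def_\GG\to\Def_{\FF\oplus\GG}$ corresponds to the block-diagonal inclusion of dgLas $\EE nd^\cdot(\EE^\cdot_\FF)\oplus\EE nd^\cdot(\EE^\cdot_\GG)\to\EE nd^\cdot(\EE^\cdot_\FF\oplus\EE^\cdot_\GG)$;
\item the forgetful functor corresponds to the inclusion $\LL^\cdot\subset\EE nd^\cdot(\EE^\cdot_\FF\oplus\EE^\cdot_\GG)$, as in Remark~\ref{rem.forgetful-functor}.
\end{itemize}
Both follow from the explicit description of $\Theta$ in the proof of Theorem~\ref{thm.dgLa def fasci coerenti}. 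Glued cohomology of a direct sum of deformed complexes is the direct sum of the glued cohomologies, and forgetting the subcomplex leaves the deformed complex unchanged. Since the total groupoid of a $1$-truncated semicosimplicial groupoid is invariant under levelwise equivalences compatible with faces up to $2$-isomorphism, this yields $\Def_{(\FF,\alpha,\GG)}\cong\Tot(\Del_{H(\VV)})$. Here $H(\VV)$ is the two-term semicosimplicial dgLa in the statement, extended by $0$.

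Then I apply Theorem~\ref{generalized hinich theorem} to $H(\VV)$ to obtain $\Del_{\Tot(H(\VV))}\cong\Tot(\Del_{H(\VV)})$. The hypothesis that each level has no negative cohomology needs to be checked.
\begin{itemize}
\item Level $0$: the cohomology is $\Ext^j_X(\FF,\FF)\oplus\Ext^j_X(\GG,\GG)\oplus\mathbb H^j(X,\LL^\cdot)$.
\item Level $1$: the cohomology is $\Ext^j_X(\FF\oplus\GG,\FF\oplus\GG)$.
\item Higher levels are $0$.
\end{itemize}
The Ext groups vanish for $j<0$. For $\LL^\cdot$ I would invoke Remark~\ref{rmk.coomologia non negativa per L}, but that remark requires the cokernel of $\gamma^\cdot\hookrightarrow\EE^\cdot_\FF\oplus\EE^\cdot_\GG$ to be a locally free resolution of $(\FF\oplus\GG)/\gamma\cong\GG$. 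This holds because the cokernel of the graph inclusion is $\EE^\cdot_\GG$ via $(e,g)\mapsto g-\alpha^\cdot(e)$, which resolves $\GG$.

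Finally, the tangent space and obstruction statements follow from standard properties of the Deligne functor of a dgLa: $H^1$ is the tangent space and $H^2$ is an obstruction space.

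The main obstacle is the second step, namely making the pseudo-naturality and the $2$-commutativity of the face squares precise, since the equivalences $\Theta$ are only pseudo-natural. I would handle this by writing the natural isomorphisms explicitly on glued cohomology and applying the $2$-functoriality of $\Tot$ for groupoids.
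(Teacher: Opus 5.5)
Your proposal is correct and follows the same route as the paper. You use Proposition~\ref{prop.equiv grpds}, then replace each vertex by a Deligne groupoid via Theorems~\ref{thm.dgLa def fasci coerenti} and~\ref{thm.dgLa def sottofasci}, and finally apply Theorem~\ref{generalized hinich theorem} to the two-term semicosimplicial dgLa $H(\VV)$; your explicit checks supply details the paper leaves implicit, namely the vanishing of negative cohomology of $\Tot(\LL^\cdot(\VV))$ (by identifying the cokernel of $\gamma^\cdot\hookrightarrow\EE_\FF^\cdot\oplus\EE_\GG^\cdot$ with $\EE_\GG^\cdot$) and the $2$-commutativity of the face squares.
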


\begin{remark}
Throughout the paper, we assume that $X$ is a smooth, integral (and so irreducible and reduced), separated scheme of finite type over $\kk$. These conditions guarantee the existence of a bounded locally free resolution for any coherent sheaf and this allows us to construct compatible resolutions (see Section~\ref{sezione appendice risoluzioni}). Then, the previous theorem applies also in a more general case, whenever there exist such compatible resolutions.
\end{remark}

\begin{corollary}\label{cor.succ-esatta-lunga-H}
 The cohomology of the Thom--Whitney dgLa $\Tot(H(\VV))$ fits into the following exact sequence: 
\begin{equation}
 \cdots H^i(\Tot(H(\VV))) \to \Ext_X^i( \FF, \FF)  \oplus \Ext_X^i(\GG, \GG)  \stackrel{(-\alpha_*, \alpha^*)}{\longrightarrow} \Ext_X^i(\FF,\GG)     \to H^{i+1}(\Tot(H(\VV)) ) \cdots.
\end{equation}
where $\alpha_* \colon \End(\FF)\to \Hom(\FF, \GG)$ and $\alpha^* \colon \End(\GG)\to \Hom(\FF, \GG)$.
\end{corollary}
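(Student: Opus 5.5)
The plan is to compute the cohomology of $\Tot(H(\VV))$ through the quasi-isomorphism (Remark~\ref{remark quasi iso semicosimpliciale}) between the Thom--Whitney totalisation and the associated total complex. Since $H(\VV)$ has only two nonzero terms $H_0$ and $H_1$, its total complex is the mapping cone (shifted by one) of
\[ \partial_{0,1}-\partial_{1,1}\colon H_0 \to H_1 . \]
This gives a long exact sequence
\[ \cdots\to H^i(\Tot(H(\VV)))\to H^i(H_0)\to H^i(H_1)\to H^{i+1}(\Tot(H(\VV)))\to\cdots. \]
Here, by the Čech example, $H^i(H_1)\cong \Ext^i_X(\FF\oplus\GG,\FF\oplus\GG)$. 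Likewise
\[ H^i(H_0)\cong \Ext^i_X(\FF,\FF)\oplus\Ext^i_X(\GG,\GG)\oplus \mathbb H^i(X,\LL^\cdot). \]

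Next I identify $\mathbb H^i(X,\LL^\cdot)$. The graph $\gamma^\cdot$ is a resolution of $\gamma\cong \FF$, and the cokernel of $\gamma^\cdot\hookrightarrow \EE_\FF^\cdot\oplus\EE_\GG^\cdot$ is isomorphic to $\EE_\GG^\cdot$ via $(x,y)\mapsto y-\alpha^\cdot x$. It is therefore a locally free resolution of $(\FF\oplus\GG)/\gamma\cong\GG$. So Remark~\ref{rmk.coomologia non negativa per L} applies and gives
\[ \cdots\to\mathbb H^i(\LL^\cdot)\to \Ext^i(\FF\oplus\GG,\FF\oplus\GG)\to \Ext^i(\FF,\GG)\to\cdots, \]
where the second map sends $\varphi$ to $\pi\circ\varphi\circ(\Id,\alpha)$, with $\pi\colon(x,y)\mapsto y-\alpha x$.

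Then I cancel. The $\LL$-summand of $\partial_{0,1}-\partial_{1,1}$ is the inclusion $\LL\to\EE nd(\EE_\FF\oplus\EE_\GG)$, so $\Tot(H(\VV))$ is quasi-isomorphic to the homotopy fibre of
\[ \EE nd(\EE_\FF)\oplus\EE nd(\EE_\GG)\to \EE nd(\EE_\FF\oplus\EE_\GG)/\LL\simeq \HH om(\gamma^\cdot,\coker)\simeq\HH om(\EE_\FF,\EE_\GG). \]
Concretely, I use the short exact sequence of Remark~\ref{rmk.coomologia non negativa per L} and the octahedral axiom. Alternatively, I compare $\Tot(H(\VV))$ with the cone of the composite map directly, using the surjectivity of $\EE nd(\EE_\FF\oplus\EE_\GG)\to\HH om(\gamma^\cdot,\coker)$ to build an explicit quasi-isomorphism of complexes.

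Finally I compute the composite map. A diagonal endomorphism $(a,b)$, restricted to the graph and projected, gives
\[ (x,\alpha x)\mapsto b\alpha x-\alpha a x , \]
which is $\alpha^*b-\alpha_*a$, i.e. the map $(-\alpha_*,\alpha^*)$ of the statement. Passing to hypercohomology, this is the claimed map $\Ext^i(\FF,\FF)\oplus\Ext^i(\GG,\GG)\to\Ext^i(\FF,\GG)$, and the long exact sequence of the fibre yields the corollary.

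The main obstacle is the cancellation step: keeping track of signs and checking that the quasi-isomorphism between the cone of $H_0\to H_1$ and the fibre of $(-\alpha_*,\alpha^*)$ holds at the level of complexes of sheaves. I would first attempt it with the explicit $3\times3$-lemma on the sheaf level, and only then take Čech hypercohomology.
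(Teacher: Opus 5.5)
Your proposal is correct and follows the paper's own argument. The paper also starts from the long exact sequence of the two-term semicosimplicial dgLa $H(\VV)$. It removes the $\LL^\cdot$ summand using injectivity of $h\colon \LL^\cdot\to\EE nd^\cdot(\EE_\FF^\cdot\oplus\EE_\GG^\cdot)$, citing a lemma from earlier work where you prove the cancellation directly: the kernel of $\mathrm{cone}(A\oplus B\to C)\to\mathrm{cone}(A\to C/B)$ is the acyclic cone of $\Id_B$. It then identifies $\coker(h)\cong\HH om^\cdot(\gamma^\cdot,\coker(\kappa))\cong\HH om^\cdot(\EE_\FF^\cdot,\EE_\GG^\cdot)$ via Remark~\ref{rmk.coomologia non negativa per L}, and computes the induced map as $(f,g)\mapsto g\alpha^\cdot-\alpha^\cdot f$. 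Your explicit isomorphism $(x,y)\mapsto y-\alpha^\cdot x$ for the cokernel of the graph fills in a detail the paper leaves implicit.
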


\begin{proof}
According to \cite[Section 4]{Dona.def maps}, for all $i\geq 0$, we have  the following exact sequence: 
\[  \ldots \to H^i(\Tot(H(\VV)) )\to \mathbb{H}^i( X, \EE nd^\cdot(\EE_\FF^\cdot) \oplus \EE nd^\cdot(\EE_\GG^\cdot) \oplus \LL^\cdot) \to \]
\[\to \mathbb{H}^i(X,\EE nd^\cdot(\EE_\FF^\cdot\oplus \EE_\GG^\cdot)) \to H^{i+1}(\Tot(H(\VV)) )\to  \ldots, \]
or equivalently, 
\[  \ldots \to H^i(\Tot(H(\VV)) )\to \Ext_X^i( \FF, \FF)  \oplus \Ext_X^i(\GG, \GG) \oplus \mathbb{H}^i( X, \LL^\cdot) \to \]
\[\to \Ext_X^i( \FF\oplus \GG,  \FF\oplus \GG) \to H^{i+1}(\Tot(H(\VV))) \to  \ldots, \]
Moreover,
since the map  $h:  \LL^\cdot \to    \EE nd^\cdot(\EE_\FF^\cdot\oplus \EE_\GG^\cdot)$ is injective, 
thanks to \cite[Lemma 3.1]{Dona.def maps},  the following sequence is also exact:
\begin{equation} \label{succ ex lunga generale}
 \cdots H^i(\Tot(H(\VV)) ) \to \Ext_X^i( \FF, \FF)  \oplus \Ext_X^i(\GG, \GG)  \to \mathbb{H}^i(X,\coker (h) )   \to H^{i+1}(\Tot(H(\VV)))  \cdots .
\end{equation}
From the constructions of the locally free resolutions, we have that $\coker (\kappa\colon \gamma^\cdot \to \EE_\FF^\cdot \oplus \EE_\GG^\cdot)$ is locally free  and $\gamma^\cdot \subset \EE_\FF^\cdot \oplus \EE_\GG^\cdot$ is the graph of the lifting $\alpha^\cdot: \EE_\FF^\cdot \to\EE_\GG^\cdot$. Then, by Remark~\ref{rmk.coomologia non negativa per L}, $\coker (h) \cong \HH om^\cdot (\gamma^\cdot, \coker (\kappa))$, therefore $\mathbb{H}^i(\coker (h) )\cong \Ext_X^i(\gamma, {(\FF\oplus \GG)}/\gamma)\cong \Ext_X^i(\FF,\GG)$. The long exact sequence \eqref{succ ex lunga generale} becomes the one required.

 Note that the map $\Ext_X^i( \FF, \FF)  \oplus \Ext_X^i(\GG, \GG)  \to  \Ext_X^i(\FF,\GG)   $ is induced by morphism of complexes of sheaves $\HH om^\cdot (\EE_\FF^\cdot, \EE_\FF^\cdot) \oplus \HH om^\cdot (\EE_\GG^\cdot, \EE_\GG^\cdot) \to  \HH om^\cdot (\gamma^\cdot, \coker (\kappa)) \cong \HH om^\cdot (\EE_\FF^\cdot , \EE_\GG^\cdot)$ sending $(f,g) \in \HH om^\cdot (\EE_\FF^\cdot, \EE_\FF^\cdot) \oplus \HH om^\cdot (\EE_\GG^\cdot, \EE_\GG^\cdot)$ to $g \alpha^\cdot - \alpha^\cdot f \in \HH om^\cdot (\EE_\FF^\cdot , \EE_\GG^\cdot)$, which we will denote by $(-\alpha^\cdot_*, {\alpha^{\cdot}}^*)$. In particular, it induces the map $(-\alpha_*, \alpha^*) \colon  \End(\FF)\oplus \End(\GG) \to \Hom(\FF, \GG)$, where we have used that $\alpha^\cdot$ is a lifting of $\alpha \colon \FF \to \GG$. 
\end{proof}

Observe that, because of the vanishing of negative ext groups for coherent sheaves, we obtain:  
\begin{equation}\label{successione esatta lunga H}
\begin{split} 
0 &\to	 {H^0(\Tot(H(\VV)))} \to  {\End(\FF)\oplus \End(\GG)} \stackrel{(-\alpha_*, {\alpha}^*)}  {\longrightarrow} \ {\Hom(\FF, \GG)      }   \to  \\
 \quad 	&\to {H^1(\Tot(H(\VV)))} \to  {\Ext_X^1(\FF, \FF)\oplus\Ext_X^1(\GG, \GG)}  \stackrel{(-\alpha_*, {\alpha}^*)} {\longrightarrow} {\Ext_X^1(\FF, \GG)} \to   \\
\quad &\to  {H^2(\Tot(H(\VV)))} \to   {\Ext_X^2(\FF, \FF)\oplus\Ext_X^2(\GG, \GG)}\ \stackrel{(-\alpha_*, {\alpha}^*)} {\longrightarrow} \   {\Ext_X^2(\FF, \GG)} \to   \cdots,
\end{split}
\end{equation}
so that $H^0(\Tot(H(\VV)))$ is the kernel of $(-\alpha_*, \alpha^*) \colon  \End(\FF)\oplus \End(\GG) \to \Hom(\FF, \GG)$, or equivalently, the fibre product of the maps $\alpha_* \colon \End(\FF)\to \Hom(\FF, \GG)$ and $\alpha^* \colon \End(\GG)\to \Hom(\FF, \GG)$. The map $(-\alpha_*, \alpha^*)$ is not trivial unless $\alpha=0$, because $\alpha \in \Hom(\FF, \GG)$ always belongs to its image.

On extension classes $ (e, e') \in {\Ext_X^1(\FF, \FF)\oplus\Ext_X^1(\GG, \GG)}$,  $(-\alpha_*, \alpha^*) \colon \Ext_X^1(\FF, \FF)\oplus\Ext_X^1(\GG, \GG) \to \Ext_X^1(\FF, \GG)$ is defined as the Baer sum of the pushout extension $-\alpha_* e$ and the pullback extension $\alpha^* e'$.

We can say more in the following specific cases. 

\begin{corollary}
If both $\FF$ and $\GG$ are rigid, i.e., $\Ext_X^1(\FF, \FF)=\Ext_X^1(\GG, \GG)=0$, the tangent space $H^1(\Tot(H(\VV)))$ is isomorphic to $\coker (-\alpha_*, \alpha^*)$. 
If $\FF$ and $\GG$ are simple, i.e., $\End(\FF)=\End(\GG)\cong\kk$, then also $H^0(\Tot(H(\VV)))\cong \kk$.
\end{corollary}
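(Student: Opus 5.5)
Both claims follow directly from the long exact sequence \eqref{successione esatta lunga H} of Corollary~\ref{cor.succ-esatta-lunga-H}; the plan is to read off the relevant segments.

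\emph{Rigid case.} The segment
\[
\End(\FF)\oplus\End(\GG)\xrightarrow{(-\alpha_*,\alpha^*)}\Hom(\FF,\GG)\to H^1(\Tot(H(\VV)))\to \Ext^1_X(\FF,\FF)\oplus\Ext^1_X(\GG,\GG)
\]
is exact. Under the rigidity hypothesis the last term vanishes, so the connecting map $\Hom(\FF,\GG)\to H^1(\Tot(H(\VV)))$ is surjective. Its kernel is the image of $(-\alpha_*,\alpha^*)$, and therefore $H^1(\Tot(H(\VV)))\cong\coker(-\alpha_*,\alpha^*)$.

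\emph{Simple case.} By exactness at the start of \eqref{successione esatta lunga H}, $H^0(\Tot(H(\VV)))$ is the kernel of
\[
(-\alpha_*,\alpha^*)\colon \kk\oplus\kk\to\Hom(\FF,\GG),\qquad (\lambda,\mu)\mapsto \mu\alpha-\lambda\alpha=(\mu-\lambda)\alpha,
\]
since every endomorphism of a simple sheaf is a scalar multiple of the identity. If $\alpha\neq 0$, the kernel is the diagonal $\{(\lambda,\lambda)\}\cong\kk$, which gives the claim.

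The only delicate point, and the main obstacle, is the degenerate case $\alpha=0$. There the kernel is all of $\kk^2$, so the statement as written implicitly requires $\alpha\neq0$. I would add this hypothesis explicitly, or remark that it is the only case in which the conclusion changes, in line with the paper's earlier observation that $(-\alpha_*,\alpha^*)$ is nontrivial unless $\alpha=0$.
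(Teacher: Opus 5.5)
Your proposal is correct and matches how the paper obtains this corollary: both claims are read off from the exact sequence \eqref{successione esatta lunga H}, with rigidity killing the $\Ext^1_X$ term and simplicity reducing $H^0(\Tot(H(\VV)))$ to the kernel of $(\lambda,\mu)\mapsto(\mu-\lambda)\alpha$. Your caveat is also right: the statement tacitly assumes $\alpha\neq 0$, consistent with the paper's remark that $(-\alpha_*,\alpha^*)$ is nontrivial unless $\alpha=0$, and when $\alpha=0$ one gets $H^0(\Tot(H(\VV)))\cong\kk^2$ instead.
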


\begin{remark}
The pseudo-functor of deformations of the graph $\gamma$ of $\alpha$ as a subsheaf of $\FF \oplus \GG$, where $\FF \oplus \GG$ deforms trivially with trivial automorphisms, see \cite[13.4]{Manetti.LMDT}, has tangent space 
$\Hom(\FF, \GG)$ and obstruction space $\Ext_X^1(\FF, \GG)$. The snake maps in  diagram  \eqref{successione esatta lunga H} should be therefore thought as the tangent and obstruction maps  of this pseudo-functor to those of $\Def_{(\FF,\alpha, \GG)} $. 
\end{remark}

\begin{remark} 
If we work over the field of complex numbers, we can consider the  Dolbeault resolutions of the endomorphisms of sheaves, and avoid the \v{C}ech semicosimplicial dgLas. More precisely, we can consider the two morphisms of dgLas
  \[  
\xymatrix{ {  A_X^{0,*}( \EE nd^\cdot(\EE_\FF^\cdot) ) \oplus A_X^{0,*}(\EE nd^\cdot(\EE_\GG^\cdot)) \oplus A_X^{0,*}(\LL^\cdot)}
\ar@<2pt>[r] \ar@<-2pt>[r] &  A_X^{0,*}(\EE nd^\cdot(\EE_\FF^\cdot\oplus \EE_\GG^\cdot)).}
\]  
Then, the associated Thom--Whitney  dgLa  controls the infinitesimal deformations of $\alpha: \FF \to \GG$.  
\end{remark}

\section{Appendix}
 
For readers' convenience, we  collect  here some useful computations.

\subsection{$2$-homotopy}\label{app.parte-hom}
 
We prove in detail that $2$-homotopy is an equivalence relation, and that it is compatible with the multiplication.

\begin{lemma} \label{lemma 2 homotopy equivalence relation}
	The $2$-homotopy of Definition~\ref{def.2-hom} defines an equivalence relation on $\Mor_{C^h_L(A)}(x,y) $.
\end{lemma}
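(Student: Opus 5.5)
We have to check reflexivity, symmetry and transitivity of the relation $\sim_h$ on $\Mor_{C^h_L(A)}(x,y)$. Throughout we use the explicit normal forms of Propositions~\ref{pro MC_L[t,dt]} and \ref{pro MC_L[t,s,dt,ds]}. By these, a homotopy is $P=e^{p(t)}*x$ and a $2$-homotopy is $R=e^{r(t,s,ds)}*x$ with $r(0,s,ds)=0$. The two other boundary conditions of Definition~\ref{def.2-hom} are that $R(t,0)=P$, $R(t,1)=Q$ and $e^{r(1,s,ds)}*x=y$.

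\emph{Reflexivity.} Given $P=e^{p(t)}*x$, we take $R(t,s,dt,ds)=P(t,dt)$, constant in $s$, which is the pullback of $P$ along the projection $L[t,dt]\to L[t,s,dt,ds]$. This pullback is a dgLa morphism, so $R$ is Maurer--Cartan. The boundary conditions are immediate: $R(0,s)=x$ and $R(1,s)=y$.

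\emph{Symmetry.} If $R(t,s,dt,ds)$ is a $2$-homotopy from $P$ to $Q$, consider the dgLa automorphism of $L[t,s,dt,ds]$ given by $s\mapsto 1-s$, $ds\mapsto -ds$. Applying it to $R$ yields a Maurer--Cartan element. Its restrictions at $s=0$ and $s=1$ are swapped, while those at $t=0,1$ stay $x$ and $y$. Hence it is a $2$-homotopy from $Q$ to $P$.

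\emph{Transitivity.} This is the main obstacle, since we cannot simply glue polynomial forms along $s=1$. Let $R_1=e^{r_1}*x$ go from $P$ to $Q$ and $R_2=e^{r_2}*x$ from $Q$ to $S$.

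First, I would use the explicit parametrisation to build a candidate. Take $\rho(t,s,ds)$ with $\rho(t,0,0)=p(t)$, $\rho(t,1,0)=s(t)$ and $\rho(0,s,ds)=0$. The remaining difficulty is to ensure $e^{\rho(1,s,ds)}*x=y$ for all $s$; an arbitrary interpolation will not achieve this.

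Second, I would work through the stabiliser of $y$ instead. By Lemma~\ref{lemma stab is irrelevant} applied to $\mu_k(s,ds)=r_k(1,s,ds)\bullet -p(1)$ (resp.\ $-q(1)$), one has $q(1)\bullet -p(1)=du_1+[y,u_1]$ and $s(1)\bullet -q(1)=du_2+[y,u_2]$. This is exactly the argument in the injectivity part of Theorem~\ref{teo gauge= homot in gruppoidi}. Therefore $s(1)\bullet-p(1)\in I_A(y)$ via some $u$.

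This reduction suggests a cleaner route. By Example~\ref{esempio omotoy p(t) con p(1) t}, any homotopy $e^{p(t)}*x$ is $2$-homotopic to $e^{p(1)t}*x$. Also, if $b\bullet-a=du+[y,u]$, then $e^{at}*x$ and $e^{bt}*x$ are $2$-homotopic via
\[
e^{\left((du_s+[y,u_s])\bullet a\right)t}*x, \qquad u_s=su,
\]
as in the well-definedness part of Theorem~\ref{teo gauge= homot in gruppoidi}. One checks that the exponent lies in the allowed space and that the boundary condition at $t=1$ holds, since $e^{du_s+[y,u_s]}$ stabilises $y$.

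Finally, to conclude transitivity without circularity, I would run a chain of these explicit $2$-homotopies, all of which are available before transitivity is known:
\[
P\sim e^{p(1)t}*x \sim e^{s(1)t}*x \sim S .
\]
Each link is given by an explicit formula, but chaining the three links again uses transitivity. To avoid this, I would compose the explicit formulas directly into a single polynomial. Concretely, I would take
\[
\rho=(1-s)\,p(t)+s\,s(t)+ t\cdot c(t,s,ds),
\]
with $c$ chosen so that $\rho(1,s,ds)=(du_s+[y,u_s])\bullet p(1)$ for $u_s=su$. This fixes $\rho(1,\cdot)$ so that $e^{\rho(1,s,ds)}*x=y$ for all $s$. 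The resulting $e^{\rho}*x$ is Maurer--Cartan by Proposition~\ref{pro MC_L[t,s,dt,ds]}, and its four boundary restrictions are $P$, $S$, $x$ and $y$, which gives the required $2$-homotopy from $P$ to $S$.
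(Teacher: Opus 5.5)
Your proof is correct, but for symmetry and transitivity it takes a different route from the paper. The paper works entirely inside the gauge group. Given $R=e^{r}*x$ from $P$ to $Q$ and $R'=e^{r'}*x$ from $Q$ to $S$, it takes $e^{q(t)\bullet -r(t,s,ds)\bullet p(t)}*x$ for symmetry and $e^{r'(t,s,ds)\bullet -q(t)\bullet r(t,s,ds)}*x$ for transitivity. The four edges are then checked by cancellation in the Baker--Campbell--Hausdorff group, and no stabiliser lemma is needed.

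Your reflection $s\mapsto 1-s$, $ds\mapsto -ds$ gives an even cleaner proof of symmetry.

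For transitivity (writing $\sigma(t)$ for your $s(t)$, to avoid the clash with the variable $s$), you proceed in two steps. First, Lemma~\ref{lemma stab is irrelevant} and the subgroup property of $I_A(y)$ give $\sigma(1)\bullet -p(1)=du+[y,u]$. Second, you use that $e^{\rho}*x$ is Maurer--Cartan for every degree-zero $\rho$, so only the four edges matter. This is plain gauge invariance, not Proposition~\ref{pro MC_L[t,s,dt,ds]}.

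Your route costs an extra lemma, but it buys more than transitivity. You have in fact shown that $P\sim_h S$ if and only if $\sigma(1)\bullet -p(1)\in I_A(y)$. Reflexivity, symmetry and transitivity all follow at once from this, so the detour through the chain $P\sim e^{p(1)t}*x\sim\cdots$ is unnecessary. This characterisation is essentially the content of Theorem~\ref{teo gauge= homot in gruppoidi}.

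Two details should be made explicit.
\begin{enumerate}
\item You can take $c=c(s,ds)=\beta(s,ds)-(1-s)p(1)-s\sigma(1)$, with $\beta=(d(su)+[y,su])\bullet p(1)$. The requirement that $c$ vanish at $s=1$, $ds=0$ (needed for the edge $s=1$ to be $S$) is exactly where the choice of $u$ enters.
\item $d(su)$ must be the differential of $L[s,ds]$, including the term in $u\,ds$. With $s\,du$ alone, the edge $t=1$ acquires a nonzero $ds$-component in general and is not constantly equal to $y$.
\end{enumerate}
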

\begin{proof}
 
	For any homotopy $  P(t,dt) \in \Mor_{C^h_L(A)}(x,y) $, we have  $  P(t,dt)\sim_{h}P(t,dt)$ via the 
	the $2$-homotopy  $R(t,s,dt,ds)=P(t,dt)$. If $  P(t,dt)=e^{p(t)}*x \sim_{h}Q(t,dt)=e^{q(t)}*x$ via  $R(t,s,dt,ds)=e^{r(t,s,ds)} *x \in \MC_{L[t,s,dt,ds]}(A) $ then $Q(t,dt)\sim_{h}P(t,dt)$ via   the $2$-homotopy $R'(t,s,dt,ds)=  e^{q(t) \bullet- r(t,s,ds)\bullet p(t) } *x\in \MC_{L[t,s,dt,ds]}(A) $.  In fact, the polynomial $q(t) \bullet -r(t,s,ds)\bullet p(t) \in  (L^0[t,s]\cdot t+ L^0[t,s]\cdot s+  L^{-1}[t,s]\cdot  tds) \otimes \m_A$ and 
	\[
	\begin{cases}
		R'(t,0,dt,0)= e^{q(t) \bullet - r(t,0,0)\bullet p(t) } *x=  e^{q(t)   } *x=Q(t,dt), \\
		R'(t,1,dt,0)= e^{q(t) \bullet- r(t,1,0)\bullet p(t) } *x =e^{p(t)}*x =P(t,dt), \\
		R'(0,s,0,ds)=  e^{q(0) \bullet -r(0,s,ds)\bullet p(0) } *x =x, \\
		R'(1,s,0,ds)= e^{q(1) \bullet -r(1,s,ds)\bullet p(1) } *x =e^{q(1) \bullet -r(1,s,ds)  } *y=e^{q(1)}*x=y.\\
	\end{cases}
	\]

	Finally, if $  P(t,dt)\sim_{h}Q(t,dt)= e^{q(t)}*x$,
	via  the $2$-homotopy $R(t,s,dt,ds)= e^{r(t,s,ds) } *x$, and    $  Q(t,dt)\sim_{h}S(t,dt)$, via the $2$-homotopy $R'(t,s,dt,ds)= e^{r'(t,s,ds) } *x$, then $  P(t,dt)\sim_{h}S(t,dt)$ via the $2$-homotopy 
	$R''(t,s,dt,ds)= e^{r'(t,s,ds) \bullet- q(t) \bullet r(t,s,ds) } *x$. In fact,  the polynomial $r'(t,s,ds) \bullet- q(t) \bullet r(t,s,ds) \in (L^0[t,s]\cdot t+ L^0[t,s]\cdot s+  L^{-1}[t,s]\cdot  tds) \otimes \m_A$ and
	\[
	\begin{cases}
		R''(t,0,dt,0)= e^{r'(t,0,0) \bullet -q(t) \bullet r(t,0,0) } *x= e^{q(t)\bullet -q(t) \bullet r(t,0,0) } *x=e^{r(t,0,0) } *x=P(t,dt), \\
		R''(t,1,dt,0)= e^{r'(t,1,1) \bullet -q(t) \bullet r(t,1,0) } *x=e^{r'(t,1,1) \bullet -q(t) \bullet q(t) } *x=e^{r'(t,1,1) } *x=S(t,dt), \\
		R''(0,s,0,ds)=  e^{r'(0,s,ds) \bullet- q(0) \bullet r(0,s,ds) } *x=  x, \\
		R''(1,s,0,ds)= e^{r'(1,s,ds) \bullet- q(1) \bullet r(1,s,ds) } *x =y,\\
	\end{cases}
	\]
	where we use  
	$R''(1,s,0,ds)= e^{r'(1,s,ds) \bullet- q(1) \bullet r(1,s,ds) } *x =e^{r'(1,s,ds) \bullet- q(1)} *y= e^{r'(1,s,ds)}*x=y $.
\end{proof}

\begin{lemma}\label{lemma 2 homotopy compatibile prodotto}
	The  $2$-homotopy equivalence relation is compatible with the multiplication given by
	\[
	\Mor_{C^h_L(A)}(y,z) \times  \Mor_{C^h_L(A)}(x,y) \to  \Mor_{C^h_L(A)}(x,z)
	\]
	\[(e^{q(t)}*y,e^{p(t)}*x) \mapsto  e^{q(t)\bullet p(t)}*x.
	\]

	\end{lemma}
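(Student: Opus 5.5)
The plan is to show that if $P=e^{p(t)}*x \sim_h P'=e^{p'(t)}*x$ in $\Mor_{C^h_L(A)}(x,y)$ and $Q=e^{q(t)}*y \sim_h Q'=e^{q'(t)}*y$ in $\Mor_{C^h_L(A)}(y,z)$, then $e^{q(t)\bullet p(t)}*x \sim_h e^{q'(t)\bullet p'(t)}*x$. It suffices to change one factor at a time, by transitivity (Lemma~\ref{lemma 2 homotopy equivalence relation}). So the proof splits into two cases.

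\emph{Case 1: fix $p$, vary $q$ via $R(t,s,dt,ds)=e^{r(t,s,ds)}*y$.} Here $r(t,0,0)=q(t)$, $r(t,1,0)=q'(t)$, $r(0,s,ds)=0$ and $e^{r(1,s,ds)}*y=z$. I would take as candidate $2$-homotopy
\[ R''(t,s,dt,ds)=e^{r(t,s,ds)\bullet p(t)}*x . \]
Since $e^{p(t)}*x=P(t,dt)$ is Maurer--Cartan in $L[t,dt]$, it is also Maurer--Cartan in $L[t,s,dt,ds]$. Applying the gauge $e^{r}$ then gives an element of $\MC_{L[t,s,dt,ds]}(A)$.

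\emph{Case 2: fix $q$, vary $p$ via $e^{r(t,s,ds)}*x$.} Here the candidate is $e^{q(t)\bullet r(t,s,ds)}*x$. The only subtlety in this case is that $e^{q(t)}$ acts on $e^{r}*x$, whose value at $t=1$ is $y$ for all $s$. For this reason I must check that the composite is legitimate at the level of polynomial gauges, not only pointwise.

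In both cases the verification is the same list of boundary checks used in Lemma~\ref{lemma 2 homotopy equivalence relation}.
\begin{enumerate}
\item The exponent lies in $(L^0[t,s]\cdot t+ L^0[t,s]\cdot s+ L^{-1}[t,s]\cdot tds)\otimes\m_A$. This holds because this space is a Lie subalgebra of $L[t,s,dt,ds]^0\otimes\m_A$ closed under BCH, and $p(t)$, $q(t)$ lie in it.
\item At $s=0,ds=0$ we recover $e^{q(t)\bullet p(t)}*x$, and at $s=1$ we recover the other product.
\item At $t=0$ the exponent vanishes, because $r(0,s,ds)=0$ and $p(0)=q(0)=0$, so we get $x$.
\item At $t=1$ we get, for example, $e^{r(1,s,ds)\bullet p(1)}*x=e^{r(1,s,ds)}*y=z$.
\end{enumerate}

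The main point to justify is item (1). A bracket of two elements of this space involving $tds$ terms still carries a factor $t\,ds$ or vanishes, since $ds\cdot ds=0$. Products of elements divisible by $t$ or $s$ stay divisible by $t$ or $s$. So the space is indeed closed under BCH.

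Finally, I would check that multiplication is well defined on $\Mor_{C^h_L}$ itself. That is, $e^{q(t)\bullet p(t)}*x$ has $q\bullet p\in L^0[t]\cdot t\otimes\m_A$ and endpoint $z$, and this is immediate.
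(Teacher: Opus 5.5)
Your proof is correct and is essentially the paper's argument. The paper varies both factors at once with the single $2$-homotopy $e^{r'(t,s,ds)\bullet r(t,s,ds)}*x$; your two cases are exactly its specialisations with one of the two $2$-homotopies taken constant, combined by transitivity, so the paper's version avoids invoking Lemma~\ref{lemma 2 homotopy equivalence relation}, but the boundary checks are the same.
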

\begin{proof}
	Suppose that $  P(t,dt)=e^{p(t)}*x\sim_{h}P'(t,dt)=e^{p'(t)}*x$  are $2$-homotopy equivalent via $R(t,s,dt,ds)=e^{r(t,s,ds) } *x \in \MC_{L[t,s,dt,ds]}(A) $  and $  Q(t,dt)=e^{q(t)}*y\sim_{h}Q'(t,dt)=e^{q'(t)}*y$ are $2$-homotopy equivalent via $R'(t,s,dt,ds)=e^{r'(t,s,ds) } *y \in \MC_{L[t,s,dt,ds]}(A) $. Then, the compositions  $e^{q(t)\bullet p(t)}*x \sim_h e^{q'(t)\bullet p'(t)}*x$ are $2$-homotopy equivalent  via
	$R''(t,s,dt,ds)=e^{r'(t,s,ds)\bullet r(t,s,ds) } *x \in \MC_{L[t,s,dt,ds]}(A) $.
	Indeed, we have 
	\begin{equation} 
		\begin{cases}
			R''(t,0,dt,0)= e^{r'(t,0,0)\bullet r(t,0,0) } *x= e^{q(t)\bullet p(t) } *x, \\
			R''(t,1,dt,0)=  e^{r'(t,1,0)\bullet r(t,1,0) } *x= e^{q'(t)\bullet p'(t) } *x,  \\
			R''(0,s,0,ds)= e^{r'(0,s,ds)\bullet r(0,s,ds) } *x=x,\\
			R''(1,s,0,ds)= e^{r'(1,s,ds)\bullet r(1,s,ds) } *x=z.\\
		\end{cases}
	\end{equation}
\end{proof}

\subsection{Locally free resolutions}\label{sezione appendice risoluzioni}
 In this part we outline some useful constructions with locally free resolutions.

\begin{lemma}\label{lemma.somma-diretta-risoluz}
 
	Let $f_1 \colon \EE_\FF^\cdot \to \FF$,  $f_2 \colon \EE_\FF'^\cdot \to \FF$, $g_1 \colon \EE_\GG^\cdot \to \GG$ and $g_2 \colon \EE_\GG'^\cdot \to \GG$ be bounded locally free resolutions of $\FF$ and $\GG$, respectively, such that $\EE_\FF^\cdot \hookrightarrow \EE_\GG^\cdot$  and $\EE_\FF'^\cdot \hookrightarrow \EE_\GG'^\cdot$ are  two bounded  locally free resolutions of $\FF \subset \GG$ that fit into a commutative diagram like \eqref{equazione risoluzione sottofascio}. Then, it is possible to construct a commutative diagram 
\begin{center}
		\begin{tikzcd}
			&                                                                  & \FF \arrow[ddd, dashed, hook, bend right]                        &                                             &   \\
			0 \arrow[r] & \EE_\FF^\cdot \oplus \EE_\FF'^\cdot \arrow[r] \arrow[d, hook] \arrow[ru,"f_1+f_2"] & \mathcal{Q}^\cdot \arrow[r] \arrow[d, hook] \arrow[u, two heads, "t"'] & \mathcal{R}^\cdot \arrow[r] \arrow[d, hook] & 0 \\
			0 \arrow[r] & \EE_\GG^\cdot\oplus \EE_\GG'^\cdot \arrow[r] \arrow[rd, "g_1+g_2"']                  & \mathcal{P}^\cdot \arrow[r] \arrow[d, two heads, "h"]                 & \mathcal{N}^\cdot \arrow[r]                 & 0 \\
			&                                                                  & \GG,                                                              &                                             &  
		\end{tikzcd}
	\end{center}
where $\mathcal{P}^\cdot,\mathcal{N}^\cdot, \mathcal{Q}^\cdot$ and $\mathcal{R}^\cdot$ are	bounded complexes of locally free sheaves,
	 the rows are short exact sequences, $t \colon \mathcal{Q}^\cdot \to \FF$ and $h \colon \mathcal{P}^\cdot \to \GG$ are  bounded locally free resolutions, and the induced morphisms $i_1 \colon \EE_\FF^\cdot \to \mathcal{Q}^\cdot$, $i_2 \colon \EE_\FF'^\cdot \to \mathcal{Q}^\cdot$, $j_1\colon\EE_\GG^\cdot \to \mathcal{P}^\cdot$, $j_2\colon\EE_\GG'^\cdot \to \mathcal{P}^\cdot$ are quasi-isomorphisms. 
\end{lemma}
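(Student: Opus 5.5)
The plan is to build $\mathcal{P}^\cdot$ and $\mathcal{Q}^\cdot$ by the standard killing-cycles construction of a common resolution dominating two given ones, carried out simultaneously for the subsheaf and the ambient sheaf. The cokernels $\mathcal{R}^\cdot$ and $\mathcal{N}^\cdot$ are then forced to be locally free.

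First I would treat $\FF$ alone. Consider the surjection $f_1+f_2\colon \EE_\FF^\cdot\oplus\EE_\FF'^\cdot\to\FF$. Its kernel is not acyclic: the complex has cohomology $\FF\oplus\FF$ in degree $0$. To kill the extra summand, which is the antidiagonal copy of $\FF$, together with the higher cycles it creates, I would use the mapping cylinder / cone idea degree by degree. Choose a locally free sheaf $\mathcal{R}^{-1}$ surjecting onto the kernel of $\FF\oplus\FF\to\FF$. Lift it to a map into $(\EE_\FF^0\oplus\EE_\FF'^0)$ as a new generator in degree $-1$; this is possible since $X$ is smooth and quasi-projective, hence has enough locally free sheaves. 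Then continue inductively downward, killing the new cycles in degree $-k$ by adding locally free sheaves $\mathcal{R}^{-k-1}$. Set $\mathcal{Q}^{-k}=\EE_\FF^{-k}\oplus\EE_\FF'^{-k}\oplus\mathcal{R}^{-k}$ with the differential twisted by the attaching maps. By construction $\mathcal{Q}^\cdot\to\FF$ is a resolution, and $0\to\EE_\FF^\cdot\oplus\EE_\FF'^\cdot\to\mathcal{Q}^\cdot\to\mathcal{R}^\cdot\to0$ is termwise split exact. Boundedness follows because $X$ is smooth, so after $\dim X$ steps the kernel is locally free and one may stop (syzygy theorem). Each inclusion $i_1,i_2$ is a quasi-isomorphism, since both source and target resolve $\FF$ compatibly with the augmentations.

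Next I would perform the same construction for $\GG$ compatibly. Write $\mathcal{P}^{-k}=\EE_\GG^{-k}\oplus\EE_\GG'^{-k}\oplus\mathcal{N}^{-k}$, where $\mathcal{N}^{-k}\supset\mathcal{R}^{-k}$. At each step, the generators $\mathcal{R}^{-k}$ already attached on the $\FF$ side are pushed into $\EE_\GG\oplus\EE_\GG'$ via the given inclusions. This yields a subcomplex, because the diagram \eqref{equazione risoluzione sottofascio} commutes. Then add further locally free generators, again in a Horseshoe-type fashion as in \cite[B.8.3]{Fulton}, to kill the remaining cycles for $\GG$. To keep $\mathcal{Q}^\cdot\hookrightarrow\mathcal{P}^\cdot$ with locally free cokernel, I would take $\mathcal{N}^{-k}=\mathcal{R}^{-k}\oplus\mathcal{N}'^{-k}$ with split inclusions. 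The map $\FF\hookrightarrow\GG$ ensures the augmentations commute.

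The main obstacle is the compatibility at the level of cycles. A cycle of $\mathcal{Q}^\cdot$ killed by $\mathcal{R}$ must still be a cycle in $\mathcal{P}^\cdot$, and the new cycles in $\mathcal{P}^\cdot$ must be killed without destroying injectivity, or the locally free cokernel, of $\mathcal{Q}^\cdot\to\mathcal{P}^\cdot$. For this step I would lean on the existence of locally free surjections onto the relevant kernel sheaves, and choose the added generators as direct summands, as in the proof of the Horseshoe Lemma. Finally, the quasi-isomorphism claims for $j_1,j_2$ follow, as for $i_1,i_2$, from comparing augmentations to $\GG$.
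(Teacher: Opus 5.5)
Your plan is the paper's: kill cycles for $\EE_\FF^\cdot\oplus\EE_\FF'^\cdot\to\FF$ and for $\EE_\GG^\cdot\oplus\EE_\GG'^\cdot\to\GG$, and attach on the $\GG$ side every generator already attached on the $\FF$ side. Your split choice $\mathcal{N}^{-k}=\mathcal{R}^{-k}\oplus\mathcal{N}'^{-k}$ is a cleaner way than the paper's ``choice of generators'' to make $\mathcal{R}^\cdot\to\mathcal{N}^\cdot$ and $\mathcal{Q}^\cdot\to\mathcal{P}^\cdot$ injective.

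However, your first step fails as written. You take $\mathcal{R}^{-1}$ surjecting onto the antidiagonal copy of $\FF$ in $\FF\oplus\FF$. You then lift $\mathcal{R}^{-1}\to\FF\oplus\FF$ through $(f_1,f_2)\colon\EE_\FF^0\oplus\EE_\FF'^0\to\FF\oplus\FF$, invoking ``enough locally free sheaves''. That property gives surjections from locally free sheaves, not liftings. On a non-affine $X$ locally free sheaves are not projective, and the obstruction lies in $\Ext^1_X(\mathcal{R}^{-1},\ker f_1\oplus\ker f_2)$. For example, on $\mathbb{P}^1$ a surjection $\OO(1)\to\OO_p$ does not lift through $\OO\to\OO_p$, since $\Hom(\OO(1),\OO)=0$. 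This non-projectivity is exactly why a roof $\EE_\FF^\cdot\to\mathcal{Q}^\cdot\leftarrow\EE_\FF'^\cdot$ is needed instead of a chain map between the two resolutions. So it is not a mere technicality.

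The repair is the paper's device: never lift from cohomology; always surject onto a subsheaf of the complex built so far. Take $\mathcal{R}^{-1}$ locally free surjecting onto $\ker(f_1+f_2)\subset\EE_\FF^0\oplus\EE_\FF'^0$. This is the paper's $\EE_\FF^0\times_\FF\EE_\FF'^0$, embedded by $(x,x')\mapsto(x,-x')$. Since $f_1,f_2$ are surjective, its image in $H^0(\EE_\FF^\cdot\oplus\EE_\FF'^\cdot)=\FF\oplus\FF$ is automatically the antidiagonal. At each later stage, surject onto the sheaf of cycles of the partial complex (the paper's $I$). On the $\GG$ side, let $\mathcal{N}'^{-k}$ surject onto the cycles of the partial $\mathcal{P}^\cdot$ (the paper's $J$). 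With these changes your construction goes through.

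Two smaller corrections.
\begin{itemize}
\item A locally free cokernel of $\mathcal{Q}^\cdot\hookrightarrow\mathcal{P}^\cdot$ is not required and not attainable in general. In degree $0$ it is $\coker(\EE_\FF^0\oplus\EE_\FF'^0\to\EE_\GG^0\oplus\EE_\GG'^0)$, which the hypotheses do not make locally free, so drop that goal.
\item You can truncate only once you are beyond both $\dim X$ and the lengths of the given resolutions. There, $\mathcal{R}^\cdot$ and $\mathcal{N}^\cdot$ are locally free resolutions of $\FF$ and $\GG$ placed in degrees $\le -1$. Their cycle sheaves are then locally free by the syzygy theorem and serve as the last terms.
\end{itemize}
This syzygy step is genuinely needed. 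The paper's sketch asserts that no new cohomology appears in degree $-2$, but that overlooks the kernel of $\mathcal{R}^{-2}\to I$.
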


\begin{proof}
	This proof follows \cite[Theorem 4.4]{joint-defos} and \cite[Lemma 7.7]{pairs}.
	Let $\mathcal{P}^k  = \mathcal{Q}^k=0$ for every  $k \geq 1$ 
	and $\mathcal{N}^k = \mathcal{R}^k =0$ for every $k \geq 0$. Let $\mathcal{Q}^0:= \EE_\FF^0 \oplus \EE_\FF'^0$,  $\mathcal{P}^0:= \EE_\GG^0 \oplus\EE_\GG'^0$, and define 
	$t= f_1+f_2 \colon \mathcal{Q}^0 \to \FF, h=g_1+g_2\colon \mathcal{P}^0 \to \GG$, with $f_1 \colon \EE_\FF^\cdot \to \FF$,  $f_2 \colon \EE_\FF'^\cdot \to \FF$, $g_1 \colon \EE_\GG^\cdot \to \GG$ and $g_2 \colon \EE_\GG'^\cdot \to \GG$ denoting the locally free resolutions of $\FF$ and $\GG$ respectively.
	
	We define $\mathcal{R}^{-1}$ and $\mathcal{N}^{-1}$ in such a way that $t$ and $h$ are isomorphisms on the zero cohomology level: the idea is to introduce, for  every new cycle in degree 0, a new generator in degree $-1$ whose differential  annihilates  the class. More explicitly, consider the fibre products $\EE_\FF^0 \times_{\FF} \EE_\FF'^0 \subset \EE_\GG^0 \times_{\GG} \EE_\GG'^0$ of the maps $f_1, f_2$ and $g_1, g_2$ respectively. Define 
	\[\mathcal{R}^{-1} = \bigoplus_{(x,x') \in\EE_\FF^0 \times_{\FF} \EE_\FF'^0} \OO_X \langle b_{(x,x')} \rangle,\]
	 \[\mathcal{Q}^{-1}= \EE_\FF^{-1}\oplus \EE_\FF'^{-1} \oplus  \mathcal{R}^{-1} = \EE_\FF^{-1}\oplus \EE_\FF'^{-1} \oplus \bigoplus_{(x,x') \in\EE_\FF^0 \times_{\FF} \EE_\FF'^0} \OO_X \langle b_{(x,x')} \rangle \] 
with $d_{\mathcal{Q}^\cdot}(e,e',b_{(x,x')})= (d_{\EE^\cdot_\FF}e +x, d_{\EE'^\cdot_\FF}e'-x')$, with $b_{(x,x')}$ being the generator of $\mathcal{R}^{-1}$ corresponding to $(x,x')\in \EE_\FF^0 \times_{\FF} \EE_\FF'^0$.  Define \[\mathcal{N}^{-1}= \bigoplus_{(y,y') \in\EE_\GG^0 \times_{\GG} \EE_\GG'^0} \OO_X \langle b_{(y,y')} \rangle,\] \[\mathcal{P}^{-1}= \EE_\GG^{-1}\oplus \EE_\GG'^{-1} \oplus  \mathcal{N}^{-1} = \EE_\GG^{-1}\oplus \EE_\GG'^{-1} \oplus \bigoplus_{(y,y') \in\EE_\GG^0 \times_{\GG} \EE_\GG'^0} \OO_X \langle b_{(y,y')} \rangle,\] 
with $d_{\mathcal{P}^\cdot}(h,h',b_{(y,y')})= (d_{\EE^\cdot_\GG} h +y, d_{\EE'^\cdot_\GG} h'-y')$, with $b_{(y,y')}$ being the generator of $\mathcal{N}^{-1}$ corresponding to $(y,y')\in \EE_\GG^0 \times_{\GG} \EE_\GG'^0$. Because of the inclusion $\EE_\FF^0 \times_{\FF} \EE_\FF'^0 \subset\EE_\GG^0 \times_{\GG} \EE_\GG'^0$, we can choose generators of $\EE_\FF^0 \times_{\FF} \EE_\FF'^0$ and $ \EE_\GG^0 \times_{\GG} \EE_\GG'^0$ in such a way that we obtain injective maps $\mathcal{R}^{-1} \hookrightarrow \mathcal{N}^{-1}$ and $\mathcal{Q}^{-1} \hookrightarrow \mathcal{P}^{-1}$.
	
	Now $t$ and $ h$ induce isomorphisms on the $0$th cohomologies, but we have introduced non-trivial cohomology in degree $-1$, so we define $\mathcal{R}^{-2}$ and $\mathcal{N}^{-2}$ to get rid of it. The $-1$-cocycles in $\mathcal{Q}^\cdot$ are the elements of the form $(e,e',b_{(x,x')})$ such that $d_{\EE^\cdot_\FF}e+x=0$, $d_{\EE'^\cdot_\FF}e'-x'=0$. If $b_{(x,x')}=0$, they are elements of the form $(e,e',0)$ such that $d_{\EE^\cdot_\FF}e=0$, $d_{\EE'^\cdot_\FF}e'=0$, i.e., $-1$-cocycles in $\EE_\FF^\cdot \oplus \EE_\FF'^\cdot$, and since the first negative cohomology vanishes, they are already coboundaries. For $b_{(x,y)} \neq 0$, we need to construct $\mathcal{R}^{-2}$ in such a way that all cocycles $(e,e',b_{(x,x')})$  are coboundaries. Consider the fibre product 
	\begin{center}
		\begin{tikzcd}
			I:={\EE_\FF^{-1}\oplus\EE_\FF'^{-1} \times_{\EE_\FF^{0}\oplus\EE_\FF'^{0}}\bigoplus_{(x,x')\in \EE_\FF^0 \times_\FF \EE_\FF'^0} \OO_X\langle b_{(x,x')} \rangle } \arrow[d] \arrow[r] & {\bigoplus_{(x,x')\in \EE_\FF^0 \times_\FF \EE_\FF'^0} \OO_X\langle b_{(x,x')} \rangle } \arrow[d, "\delta"] \\
			\EE_\FF^{-1}\oplus\EE_\FF'^{-1} \arrow[r, "-{d}"]                                                                                                           & \EE_\FF^0\oplus\EE_\FF'^0                                                                    
		\end{tikzcd}
	\end{center}
of the maps	$\delta(b_{(x,x')})= (x, -x')$ and $-{d}(e,e')=(-d_{\EE^\cdot_\FF}e, -d_{\EE'^\cdot_\FF}e')$.   Define 
\[ \mathcal{R}^{-2} = \!\!\! \!\!\! \!\!\! \bigoplus_{(e,e',b_{(x,x')}) \in I} \!\!\! \!\!\!\!\!\! \OO_X \langle c_{(e,e',b_{(x,x')})} \rangle, \  \mathcal{Q}^{-2}= \EE_\FF^{-2}\oplus \EE_\FF'^{-2} \oplus  \mathcal{R}^{-2} = \EE_\FF^{-2}\oplus \EE_\FF'^{-2} \oplus  \!\!\!\!\!\!  \!\!\! \bigoplus_{(e,e',b_{(x,x')}) \in I} \!\!\!\!\!\! \!\!\! \OO_X \langle c_{(e,e',b_{(x,x')})} \rangle,\]
with $d_{\mathcal{Q}^\cdot}(c_{(e,e',b_{(x,x')})}):= (e,e',b_{(x,x')})$.
Note that  $d_{\mathcal{Q}^\cdot}^2(z,w,c_{(e,e',b_{(x,x')})})= d_{\mathcal{Q}^\cdot}(d_{\EE^\cdot_\FF}z + e, d_{\EE'^\cdot_\FF}w+e', b_{(x,x')})= (d_{\EE^\cdot_\FF}e + x, d_{\EE'^\cdot_\FF}e' -x', 0)=0$.
 
Since there is an injective map
\[ I={\EE_\FF^{-1}\oplus\EE_\FF'^{-1} \times_{\EE_\FF^{0}\oplus\EE_\FF'^{0}}\!\!\!\!\!\!\!\!\!\bigoplus_{(x,x')\in \EE_\FF^0 \times_\FF \EE_\FF'^0} \!\!\!\!\!\!\!\!\!\OO_X\langle b_{(x,x')} \rangle }  \to J:={\EE_\GG^{-1}\oplus\EE_\GG'^{-1} \times_{\EE_\GG^{0}\oplus\EE_\GG'^{0}}\!\!\!\!\!\!\!\!\!\bigoplus_{(y,y')\in \EE_\GG^0 \times_\GG \EE_\GG'^0} \!\!\!\!\!\!\!\!\!\OO_X\langle b_{(y,y')} \rangle }, \]
as before we can choose generators of $I$ and $ J$ in such a way that we obtain injective maps $\mathcal{R}^{-2} \hookrightarrow \mathcal{N}^{-2}:= \bigoplus_{u \in J} \OO_X \langle c_u \rangle$, and $\mathcal{Q}^{-2} \hookrightarrow \mathcal{P}^{-2}:=  \EE_\GG^{-2}\oplus\EE_\GG'^{-2} \oplus  \mathcal{N}^{-2} $.

We have not introduced any extra cohomology in degree $-2$: the $-2$-cocycles in $\mathcal{Q}^\cdot$ are elements of the form
 $(z,w,c_{(e,e',b_{(x,x')})})$  such that $(d_{\EE^\cdot_\FF}z+e, d_{\EE'^\cdot_\FF}w+e', b_{(x,x')})=0$  which implies $b_{(x,x')}=0$, so by construction $c_{(e,e',b_{(x,x')})}=0$, so they are elements of the form  $(z,w,0)$ such that $d_{\EE^\cdot_\FF}z=d_{\EE'^\cdot_\FF}w=0$,
i.e., $-2$-cocycles in $\EE_\FF^\cdot \oplus \EE_\FF'^\cdot$, so they are coboundaries. The same holds for the $-2$-cocycles in $\mathcal{P}^\cdot$, so the process ends: $\mathcal{Q}^k = \EE_\FF^k \oplus \EE_\FF'^k$ and $\mathcal{P}^k = \EE_\GG^k \oplus \EE_\GG'^k$ for all $k \leq -3$.
\end{proof}

\begin{lemma}\label{lem.sollev-morf}
	Given a morphism of coherent sheaves $\alpha \colon \FF \to \GG$ there exist bounded locally free resolutions $\EE_\FF^{\cdot} \to \FF$  and  $\EE_\GG^{\cdot} \to \GG$ such that there exists a lifting of $\alpha \colon \FF \to \GG$ to  $\alpha^\cdot: \EE_\FF^\cdot \to \EE_\GG^\cdot$.
\end{lemma}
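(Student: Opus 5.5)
The plan is to fix a bounded locally free resolution of $\GG$ first, and then build the resolution of $\FF$ by pulling back along $\alpha$, so that a lifting exists by construction. Since $X$ is a smooth (separated, finite type, integral) variety, every coherent sheaf is a quotient of a locally free sheaf and admits a bounded locally free resolution of length at most $\dim X$; these are the only inputs needed.

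First, choose any bounded locally free resolution $g\colon \EE_\GG^\cdot \to \GG$. Next, choose a locally free sheaf $\mathcal{P}$ with a surjection $p\colon \mathcal{P} \to \FF \times_\GG \EE_\GG^0$, where the fibre product is taken over $\alpha$ and $g\colon \EE_\GG^0 \to \GG$; this fibre product is coherent. Define $\EE_\FF^0 := \mathcal{P}$, with augmentation $f := \mathrm{pr}_1\circ p\colon \EE_\FF^0 \to \FF$, and set $\alpha^0 := \mathrm{pr}_2\circ p\colon \EE_\FF^0 \to \EE_\GG^0$. The map $f$ is surjective because $g$ is surjective, so $\mathrm{pr}_1$ is surjective. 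By construction $g\alpha^0 = \alpha f$.

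The inductive step continues in the same way. Suppose that $\EE_\FF^0,\dots,\EE_\FF^{-k}$ and compatible maps $\alpha^{-j}$ have been built, exact in degrees $> -k$. Let $K^{-k} := \ker(\EE_\FF^{-k} \to \EE_\FF^{-k+1})$, or $\ker f$ when $k=0$. Since $\alpha^{-k}$ commutes with the differentials and augmentations, it maps $K^{-k}$ into $\ker(\EE_\GG^{-k}\to\EE_\GG^{-k+1}) = \im(\EE_\GG^{-k-1})$, using exactness of $\EE_\GG^\cdot$. Choose a locally free $\EE_\FF^{-k-1}$ surjecting onto $K^{-k}\times_{\im d} \EE_\GG^{-k-1}$, which is coherent. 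The two projections then give the differential $\EE_\FF^{-k-1} \to \EE_\FF^{-k}$, with image $K^{-k}$, and the component $\alpha^{-k-1}$, with $d\alpha^{-k-1} = \alpha^{-k} d$.

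The main obstacle is boundedness: this procedure is naively infinite, and $\EE_\GG^\cdot$ eventually becomes zero. I would handle it by stopping at stage $n = \dim X$. Because $X$ is smooth, the kernel $K^{-n+1}$ is locally free; this is a syzygy argument, since the local rings are regular of dimension $\le n$, so $(n-1)$-st syzygies of coherent modules are projective. I would take $\EE_\FF^{-n} := K^{-n+1}$ and $\EE_\FF^{k}=0$ for $k<-n$. The lift in degree $-n$ is defined as follows: if $\EE_\GG^\cdot$ has length $\le n$, set $\alpha^{-n}$ to be the restriction of $\alpha^{-n+1}$ composed through, or equivalently the induced map into $\EE_\GG^{-n}$. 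This map exists because $\EE_\GG^{-n}\to \ker(d)$ is an isomorphism onto the kernel, by injectivity of the leftmost differential. Otherwise, I would first pad $\EE_\GG^\cdot$ to length exactly $n$, replacing its last term by a kernel, which is again locally free by the same syzygy argument. Beyond degree $-n$ both complexes vanish, so $\alpha^\cdot$ is a morphism of bounded complexes lifting $\alpha$.
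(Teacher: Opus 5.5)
Your proof is correct and follows the same route as the paper: fix a bounded locally free resolution of $\GG$, start with a locally free $\EE_{\FF}^0$ surjecting onto $\EE_{\GG}^0\times_{\GG}\FF$, and then inductively choose $\EE_{\FF}^{r-1}$ locally free surjecting onto $\EE_{\GG}^{r-1}\times_{\ker d^r_{\EE_{\GG}}}\ker d^r_{\EE_{\FF}}$, with the two projections giving the differential and the lift. Your one addition is to make termination explicit through the syzygy theorem on the regular local rings (in the usual counting $K^{-n+1}$ is the $n$-th syzygy of $\FF$, not the $(n-1)$-st, and that is why it is locally free), a point the paper leaves to its citation of Fulton's B.8.3.
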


\begin{proof}
This is based on \cite[B.8.3]{Fulton}. Let  $\eta \colon \EE_\GG^{\cdot} \to \GG$ be any bounded locally free resolution of $\GG$: we will use this to construct a resolution of $\FF$ equipped with a lifting of $\alpha$. Define  $\EE^0_\FF$ to be a locally free sheaf such that there  exists a surjective map to the fibre product $\EE_\GG^0 \times_\GG \FF$ of the maps $\eta \colon \EE_\GG^0 \to \GG$ and $\alpha \colon \FF \to \GG$: 
	\begin{center}
		\begin{tikzcd}
			\EE_\FF^0 \arrow[rdd, "\alpha^0"', bend right=20] \arrow[rrd, bend left=20] \arrow[rd, two heads] &                                                    &                         \\
			&\EE_\GG^0\times_\GG \FF \arrow[d] \arrow[r, two heads] & \FF \arrow[d, "\alpha"] \\
			& \EE_\GG^0 \arrow[r, "\eta"', two heads]                & \GG .                   
		\end{tikzcd}               
	\end{center}
 The inductive step works similarly: assume we have built the complex $\EE_\FF^\cdot $ and the map $\alpha^\cdot$ up to degree $r \leq 0$, since $\EE_\GG^\cdot$ is  exact in negative degrees, $\im (d_{\EE_\GG^\cdot}^{r-1})= \ker (d_{\EE_\GG^\cdot}^r)$, so that we can choose $\EE_\FF^{r-1}$ to be a locally free sheaf such that there exists a surjective map to the fibre product $\EE_\GG^{r-1}\times_{\ker(d_{\EE_\GG^\cdot}^r)} \ker (d_{\EE^\cdot_\FF}^r)$, as in the diagram
	\begin{center}
		\begin{tikzcd}
			\EE_\FF^{r-1} \arrow[rdd, "\alpha^{r-1}"', bend right=20] \arrow[rrd, bend left=20] \arrow[rd, two heads] &                                                                                &                          \\
			& {\EE_\GG^{r-1}\times_{\ker (d_{\EE^\cdot_\GG}^r)} \ker (d_{\EE^\cdot_\FF}^r) }\arrow[d] \arrow[r, two heads] & \ker (d_{\EE^\cdot_\FF}^r) \arrow[d] \\
			& \EE_\GG^{r-1} \arrow[r, "d_{\EE^\cdot_\GG}^{r-1}"', two heads]                                     & \ker (d_{\EE^\cdot_\GG}^r)         . 
		\end{tikzcd}
	\end{center}
\end{proof}

\begin{corollary}\label{cor.grafico}
 		Given a morphism of coherent sheaves $\alpha \colon \FF \to \GG$ and a lifting  $\alpha^\cdot: \EE_\FF^\cdot \to \EE_\GG^\cdot$  of $\alpha \colon \FF \to \GG$ to bounded locally free resolutions as in Lemma~\ref{lem.sollev-morf}, the graph of $\alpha^\cdot$ is a bounded locally free resolution of the graph of $\alpha$.
\end{corollary}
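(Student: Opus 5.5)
The graph $\gamma^\cdot\subset \EE_\FF^\cdot\oplus\EE_\GG^\cdot$ of $\alpha^\cdot$ is the image of the morphism of complexes $(\Id,\alpha^\cdot)\colon \EE_\FF^\cdot\to \EE_\FF^\cdot\oplus\EE_\GG^\cdot$. This morphism is a split injection in each degree, with left inverse the first projection $\operatorname{pr}_1$, which commutes with the differentials. I will therefore prove two things: first, that $(\Id,\alpha^\cdot)$ induces an isomorphism of complexes $\EE_\FF^\cdot\cong\gamma^\cdot$ compatible with the augmentations to $\FF\cong\gamma$; second, that each $\gamma^k$ is a locally free sheaf (with locally free cokernel in $\EE_\FF^k\oplus\EE_\GG^k$).

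For local freeness, note that $\gamma^k$ is the image of the split injection $(\Id,\alpha^k)$. Hence $\gamma^k\cong\EE_\FF^k$ is locally free. Its cokernel is identified with $\EE_\GG^k$ via the map $(u,v)\mapsto v-\alpha^k u$, so it is locally free as well; this is exactly the property used later in Corollary~\ref{cor.succ-esatta-lunga-H}. The differential of $\EE_\FF^\cdot\oplus\EE_\GG^\cdot$ preserves $\gamma^\cdot$ because $\alpha^\cdot$ is a chain map: $d(u,\alpha u)=(du,d\alpha u)=(du,\alpha du)$. Thus $\gamma^\cdot$ is a bounded subcomplex, isomorphic as a complex to $\EE_\FF^\cdot$ through $\operatorname{pr}_1$.

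For exactness, the complex $\gamma^\cdot$ is acyclic in negative degrees because $\EE_\FF^\cdot$ is. It remains to identify the augmentation. The resolution $\EE_\FF^\cdot\oplus\EE_\GG^\cdot\to\FF\oplus\GG$ is $(f,g)$, where $f\colon\EE_\FF^0\to\FF$ and $g\colon\EE_\GG^0\to\GG$. Restricted to $\gamma^0$ it sends $(u,\alpha^0u)\mapsto(f u,g\alpha^0 u)=(fu,\alpha f u)$, using that $\alpha^\cdot$ lifts $\alpha$, i.e.\ $g\alpha^0=\alpha f$. So the composite $\gamma^0\to\FF\oplus\GG$ lands in $\gamma=\operatorname{im}(\Id,\alpha)$, and under the isomorphisms $\gamma^0\cong\EE_\FF^0$ and $\gamma\cong\FF$ (given by the first projection) it becomes $f$. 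Since $f$ is surjective with kernel $\operatorname{im}(d\colon\EE_\FF^{-1}\to\EE_\FF^0)$, the augmented complex $\gamma^\cdot\to\gamma\to0$ is exact. In addition, the inclusion $\gamma^\cdot\hookrightarrow\EE_\FF^\cdot\oplus\EE_\GG^\cdot$ lies over $\gamma\hookrightarrow\FF\oplus\GG$, giving a diagram as in \eqref{equazione risoluzione sottofascio}.

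There is no real obstacle here; the argument is bookkeeping. The only point needing care is the compatibility $g\alpha^0=\alpha f$ of the lifting, which guarantees that the augmentation of $\gamma^\cdot$ lands in $\gamma$ and is surjective onto it.
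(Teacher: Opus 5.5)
Your proof is correct and follows the paper's argument: the isomorphism $\gamma^\cdot\cong\EE_\FF^\cdot$ induced by $(\Id,\alpha^\cdot)$ gives both local freeness and exactness in negative degrees, and the lifting condition $g\alpha^0=\alpha f$ identifies the augmentation $\gamma^0\to\gamma$ with $f$. You also spell out two points the paper leaves implicit: exactness at degree $0$, and the fact that the cokernel complex is isomorphic to $\EE_\GG^\cdot$, hence locally free, which is what is used later in Corollary~\ref{cor.succ-esatta-lunga-H}.
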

\begin{proof}
	It is clear that $\gamma^\cdot := \operatorname{graph} (\alpha^\cdot)$ is a bounded complex of locally free sheaves, as it is isomorphic to $\EE_\FF^\cdot$. For the same reason, the complex $\gamma^\cdot$ is exact in all negative degrees, and the projection $\EE_\FF^\cdot \oplus \EE_\GG^\cdot \to \FF \oplus \GG$ induces a surjective map $\gamma^\cdot \to \gamma := \operatorname{graph}(\alpha)$.
\end{proof}

\end{document}